\DeclarePairedDelimiter\innerproduct{\langle}{\rangle}
\DeclareMathOperator*{\esssup}{ess\,sup}
\newtheorem{lem}{Lemma}[section]
\newtheorem{thm}{Theorem}[section]
\newtheorem{remark}{Remark}
\newtheorem{definition}{Definition}[section]
\newcommand{\mycomment}[1]{}
\newcommand{\br}[2]{\left(#1 , #2\right)}
\newcommand{\nr}[1]{\| #1 \|_{L^2}^2}
\newcommand{\nrm}[1]{\| #1 \|_{L^2}}
\newcommand{\norm}[1]{{\left\lVert#1\right\rVert}_{L^{\infty}(\Omega)}}
\title{Global Existence and Finite-Time Blow-Up for a Coupled Darcy-Forchheimer-Brinkman System with Quadratic Reaction Dynamics }
\author{
Sahil Kundu$^{1}$\thanks{Email: sahil.kundu.in@gmail.com},
Manmohan Vashisth$^{1}$\thanks{Email: manmohanvashisth@iitrpr.ac.in },
Manoranjan Mishra$^{1}$\thanks{Email: manoranjan@iitrpr.ac.in}\\[0.5em]
$^{1}$Department of Mathematics, Indian Institute of Technology Ropar, Rupnagar, India
}
\date{ 24 May, 2024}
\begin{document}
\maketitle

\begin{abstract}
	We study a nonlinear system coupling the Darcy-Forchheimer-Brinkman equations with a convection-diffusion-reaction equation, arising in reactive transport through porous media. The model features a nonlinear viscosity coupling, Forchheimer inertial drag, convective transport, and a quadratic reaction term. We establish the existence of local-in-time weak solutions for general initial data. Under the physically relevant condition on initial data $0 \leq c_0 \leq 1$, a maximum principle for the concentration is proved, yielding global existence and uniqueness of weak solutions in two and three space dimensions. For higher regular initial data, we obtain the existence, uniqueness, and continuous dependence of strong solutions. In this regime, the concentration decays exponentially to zero in $L^p$-norm for all $1 \leq p \leq \infty$ with a uniform decay rate. In contrast, if $c_0 > 1$, we demonstrate the occurrence of finite-time blow-up of solutions and derive an explicit upper bound for the blow-up time. Finally, numerical simulations based on the finite element method are presented to illustrate both the decay behavior and finite-time blow-up predicted by the theory.
\end{abstract}

\keywords{: Darcy-Forchheimer-Brinkman equations \and convection-diffusion-reaction systems \and global well-posedness \and finite-time blow-up \and maximum principle \and  porous media flow }

\section{Introduction}
 The study of coupled fluid dynamics and reactive transport in porous media is essential for modeling numerous phenomena across engineering, environmental sciences and biophysics. This coupling arises in a wide range of physical applications, including enhanced oil recovery \cite{lake2014fundamentals}, groundwater filtration \cite{dentz2011mixing}, and the geological sequestration of carbon dioxide \cite{huppert2014fluid, emami2015convective}. In slow viscous flows, the velocity of fluids can be modeled by the classical Darcy law, which fails to capture the behavior observed at higher flow rates where inertial effects become significant \cite{zeng2006criterion,ma1993microscopic}. Therefore, a non-linear correction to the Darcy equation is necessary to accurately model these regimes.

 This limitation was addressed by Forchheimer in 1901 \cite{Forchheimer1901}, who modified Darcy's law by introducing a quadratic velocity term, known as the Forchheimer term, in the momentum balance equation. After combining this with the Brinkman extension \cite{brinkman1949calculation}, incorporating viscous diffusion, which results into the  Darcy-Forchheimer-Brinkman (DFB) equation. This unified model effectively accounts for linear Darcy resistance, nonlinear Forchheimer inertia, and macroscopic viscous shear effects. The unsteady DFB equations for fluid motion are given by
\begin{align*}
\frac{\partial \boldsymbol{u}}{\partial t} + \frac{\mu(c)}{K} \boldsymbol{u} + \beta |\boldsymbol{u}| \boldsymbol{u} = -\boldsymbol{\nabla} p + \mu_{e} \Delta \boldsymbol{u} + \boldsymbol{f},
\end{align*}
coupled with the incompressibility condition:
\begin{equation*}
\nabla \cdot \boldsymbol{u} = 0.
\end{equation*}
Here, $\boldsymbol{u}$ and $p$ represent the velocity and pressure, respectively. The parameters $K$ and $\beta$ denote the permeability and the Forchheimer coefficient of the porous medium. Furthermore, $\mu(c)$ is the concentration-dependent viscosity, $\mu_e$ is the effective viscosity and $\boldsymbol{f}$ denotes an external force. The flow dynamics are coupled with a convection-diffusion-reaction equation describing the transport of the solute concentration $c$:
\begin{align*} 
\frac{\partial c}{\partial t} + \boldsymbol{u}\cdot \boldsymbol{\nabla}c = D \Delta c + R(c),
\end{align*}
where $D$ is the molecular diffusion coefficient and $R(c)$ represents the reaction source term. In this work, we focus on reaction kinetics of the quadratic type, specifically $R(c) = \kappa\,c(c-1)$. This non-linearity is physically significant, serving as a prototype for autocatalytic reactions \cite{fisher1937wave} and combustion processes \cite{williams2018combustion}. However, such reactions introduce substantial analytical challenges, as the superlinear term $c(c-1)$ becomes destabilizing once the concentration exceeds the unit threshold $c=1$. Unlike logistic-type nonlinearities, this term precludes the direct use of standard maximum principles to obtain a global $L^\infty$ bound (see \cite{EvansPDE}). To address this, we establish a maximum principle for concentration via a contradiction argument, testing the weak formulation with carefully constructed cutoff functions, which amounts to a uniform bound for concentration. This uniform bound is crucial to ensure global existence and uniqueness of the solution. We assume that the viscosity function $\mu$ is locally Lipschitz continuous and non-negative preserving, i.e., $\mu(s)\geq 0$ whenever $s\geq 0$.
Despite a strong non-linear coupling, we establish existence and uniqueness of solutions under the above-mentioned assumption on viscosity, which are weaker than those of prior works where typical assumptions of global Lipschitz continuity or uniform boundedness of the viscosity are taken (see, for instance, \cite{Amirat1998, Feng1995,roy2025existence} and therein).
We further emphasize that our analysis is applicable to a wide range of standard models, including those with exponential or polynomial viscosity profiles, which are physically relevant and have been extensively investigated in numerical studies (see, e.g., \cite{TanHomsy1986, Manickam1993, Pramanik_Hota_Mishra_2015}).

The mathematical analysis of fluid flow in porous media is well-established, particularly when the hydrodynamic equations are considered in isolation from transport phenomena. For the stationary Darcy-Brinkman-Forchheimer equations, the existence and uniqueness of weak solutions have been established in both bounded and unbounded domains (see, e.g., \cite{Varsakelis_2017}). Regarding the unsteady case, global existence and continuous dependence results for equations with constant coefficients were derived in \cite{ugurlu2008existence}. Building on these results, Hajduk and Robinson \cite{HAJDUK20177141} proved the existence of global-in-time smooth solutions for the convective Brinkman–Forchheimer equations under specific assumptions on the viscosity and Forchheimer coefficients. Furthermore, Medková \cite{MEDKOVA2024398} extended this analysis to the Darcy-Forchheimer-Brinkman system in bounded domains, utilizing a fixed-point theorem to establish the existence of solutions for the associated mixed problem.

However, results for fully coupled systems involving concentration-dependent physical properties, such as viscosity and density, remain limited. Sayah et al. \cite{refId0} established existence and uniqueness of a stationary convection–diffusion–reaction system coupled with the Darcy–Forchheimer equation using a Galerkin approach, under the assumptions of constant viscosity and density and with a linear reaction term. In the unsteady setting, well-posedness results for fully coupled models with constant coefficients were obtained by Titi and Trabelsi \cite{titi2024global}, who proved global well-posedness for a three-dimensional Brinkman–Forchheimer–Bénard convection problem, while Kundu et al. \cite{kundu2024existence} analyzed an unsteady Darcy–Brinkman system for miscible flows allowing concentration-dependent permeability but still assuming constant viscosity. More recently, Roy and Pramanik \cite{roy2025existence} investigated an unsteady Darcy–Brinkman system with quadratic chemical reactions. Although global well-posedness was established in the non-reactive case, the presence of quadratic reaction kinetics, leading to superlinear growth, restricted the analysis to local-in-time existence. Consequently, to the best of our knowledge, global existence and uniqueness of a fully coupled Darcy–Forchheimer–Brinkman system interacting with a convection–diffusion–reaction equation featuring quadratic reaction kinetics remain an open problem, which is one of the main objectives of this paper. 

We provide a comprehensive analytical framework for this coupled system in two and three spatial dimensions ($d=2, 3$). More precisely, using the Faedo–Galerkin method, combined with truncation techniques and a priori energy estimates, we establish the existence of a local solution. Crucially, under the assumption that the initial concentration satisfies $0 \leq c_0 \leq 1$, we prove that the established local in time solution can be extended to the whole time interval. Furthermore, if we have additional regularity on the initial data, we prove the existence of a strong solution and its continuous dependence on initial conditions. Finally, we investigate the asymptotic behavior and stability thresholds for the above-mentioned system. We prove that the concentration decays exponentially when $0\leq c_0\leq 1$, while there is a finite-time blow up in the concentration when $c_0 > 1$. This shows that the assumption $0\leq c_0\leq 1$ is crucial for the global well-posedness of the system given by \eqref{model1}-\eqref{assumption:beta-k} in section \ref{sec: mathematical model} below. Finally, we numerically simulate the above-mentioned system using the Finite Element Method in COMSOL Multiphysics and verify our theoretical findings. 

The remainder of this paper is organized as follows. In Sections \ref{sec: mathematical model} and \ref{sec: notations}, we introduce the mathematical model, relevant function spaces, and key inequalities. Section \ref{sec: Existence} is devoted to the global well-posedness theory, where we prove the existence of local solutions via the Faedo-Galerkin method and further extend them globally using the maximum-minimum principles. In Section \ref{sec: strong solution}, we prove the higher order regularity of these solutions, which results in the existence of strong solutions.  Section \ref{sec: Asymptotic Behaviour} investigates the long-time behavior, presenting a dichotomy between exponential decay for sub-threshold solutions and finite-time blow-up for super-threshold data. In section \ref{Numerical Validation}, a detailed numerical investigation based on a finite element discretization is provided, which corroborates our theoretical predictions regarding the decay behavior and the formation of singularities. Finally, we offer concluding remarks and discuss potential directions for future research in \ref{sec: conclusion}.

\section{ Mathematical Model }\label{sec: mathematical model}
We consider the miscible displacement of a fluid in a saturated porous medium, accounting for the effects of density-driven flow, concentration-dependent viscosity and linear adsorption. Let $\Omega \subset \mathbb{R}^d$ ($d=2$ or $3$) be a bounded domain representing the porous medium and for $T>0$,  $[0, T]$ denote the time interval of interest.
 The following system of equations governs the dynamics of the flow:
\begin{subequations}\label{model}
\begin{equation}\label{model1}
  \boldsymbol{\nabla} \cdot \boldsymbol{u} = \boldsymbol{0}  \quad \text{in}\ (0,T) \times \Omega,  
\end{equation}
\begin{equation}\label{model2}
\frac{\partial \boldsymbol{u}}{\partial t} + \frac{\mu(c)}{K}\boldsymbol{u} + \beta |\boldsymbol{u}|\boldsymbol{u} = - \boldsymbol{\nabla} p  + \mu_{e}\Delta \boldsymbol{u} + \boldsymbol{f}  \quad \text{in}\ (0,T) \times \Omega,
\end{equation}
\begin{equation}\label{model3}
    \frac{\partial c}{\partial t} + \boldsymbol{u} \cdot \boldsymbol{\nabla} c = D \Delta c + \kappa\, c(c-1) \quad \text{in}\ (0,T) \times \Omega 
\end{equation} where ${\bf u},c,\beta,p,\mu_e,K, D$ and $\kappa$ are as defined in section  \ref{Sec: Introduction}. 
Equation \eqref{model1} expresses the conservation of mass for an incompressible fluid,  equation \eqref{model2} corresponds to the conservation of momentum, and equation \eqref{model3} represents the convection diffusion reaction. 
The system is supplemented with no-flux boundary conditions
\begin{equation}\label{boundary conditions}
\boldsymbol{u} = \boldsymbol{0}, \quad \boldsymbol{\nabla}c \cdot \boldsymbol{\nu} = 0 \quad \text{on } (0,T) \times \partial \Omega,
\end{equation}
and the initial concentration distribution is prescribed by
\begin{equation}\label{initial conditions}
\boldsymbol{u}(0, \boldsymbol{x}) = \boldsymbol{u}_0(\boldsymbol{x}), \quad c(0,\boldsymbol{x}) = c_{0}(\boldsymbol{x}) \quad \text{for } \boldsymbol{x} \in \Omega.
\end{equation}
We further assume that the Forchheimer coefficient $\beta \in L^{\infty}(\Omega)$ and  the reaction rate  $\kappa\in L^\infty(\Omega)$  satisfy 
\begin{align}\label{assumption:beta-k}
0  \leq \beta(\boldsymbol{x}) \leq \beta_2 \quad \text{and} \quad 0 < \kappa_1 \leq \kappa(\boldsymbol{x}) \leq \kappa_2,\ \mbox{for  a.e. $\boldsymbol{x} \in \Omega$ }  
\end{align}
and some 
constants $ \beta_2, \kappa_1$ and  $\kappa_2$. We also assume that the viscosity function $\mu$ is locally Lipschitz and preserves non-negativity (i.e., $\mu(s) \geq 0$ for $s \geq 0$). 
\end{subequations}


\section{ Preliminaries and Concept of Weak Solution}\label{sec: notations}
In this section, we introduce the standard notations and recall certain well-known analytical results that will be used throughout the paper. Let $\Omega \subset \mathbb{R}^d$ (with $d = 2$ or $3$) be an open, bounded domain with a $C^1$ boundary $\partial \Omega$. The Sobolev spaces $W^{k,p}(\Omega)$ and $H^k(\Omega)$ are understood in the usual sense, endowed with their standard norms. The notations $\br{\cdot}{\cdot}$ and $\nrm{\cdot}$ denote the standard $L^2$-inner product and $L^2$-norm respectively, in both $L^2(\Omega)$ and $(L^2(\Omega))^d$ which can be understood in the given context.
The solution spaces for the velocity field are defined by 
\[
\boldsymbol{S} := \left\{ \boldsymbol{v} \in (L^{2}(\Omega))^d \  : \; \nabla \cdot \boldsymbol{v} = 0 \text{ in } \Omega \right\} \ \mbox{and} \ \boldsymbol{V} := \left\{ \boldsymbol{v} \in (H^{1}_{0}(\Omega))^d \ :\  \nabla \cdot \boldsymbol{v} = 0 \text{ in } \Omega \right\}.
\] 
Throughout this paper, $\langle \cdot, \cdot \rangle$ will denote the duality pairing between a Banach space and its dual. The specific spaces involved (e.g., between $V^*$ and $V$ or $(H^1)^*$ and $H^1$) are understood by the context. For any Banach space $Y$ with norm $\lVert\cdot\rVert_Y$, we define the Bochner spaces
$$ {L^{p}(a, b ; Y)} := \left\{ \phi:[a, b] \rightarrow Y : \ \mbox{$\phi$ is strongly measurable and } \|\phi\|_{L^{p}(a, b ; Y)}:=\left(\int_{a}^{b}\|\phi\|_{Y}^{p} \, \mathrm{d} t \right)^{\frac{1}{p}}  < \infty      \right\} ~ ~ \text{for} ~~1 \leq p < \infty, $$ 
and 
$${L^{\infty}(a, b ; Y)} := \left\{ \phi:[a, b] \rightarrow Y : \ \mbox{$\phi$ is strongly measurable and } \|\phi\|_{L^{\infty}(a, b ; Y)}:= \esssup_{[a, b]}\|\phi\|_{Y} < \infty      \right\}.$$
We denote by $ M\in [0, \infty)$ a generic constant which will be context-dependent.
\begin{thm}[Gagliardo–Nirenberg, cf. { \cite[Lemma~1]{migorski2019nonmonotone}}, { \cite[Lemma~1.1]{garcke2019}}]\label{gagliardo} 
If $\Omega \subset \mathbb{R}^d \,(d=2,3)$ is a domain with $\mathcal{C}^{1}$ boundary, then for any $ \phi \in H^{1}(\Omega)$ there exists a constant $M>0$ depending only on $\Omega$ such that the following inequality holds when $d=2$ 
$$
\|\phi\|_{L^{4}} \leq M\|\phi\|_{L^{2}}^{1 / 2}\| \phi\|_{H^{1}}^{1 / 2},   
$$
and for $d=3$, the inequality 
$$
\|\phi\|_{L^{4}} \leq M\|\phi\|_{L^{2}}^{1 / 4}\| \phi\|_{H^{1}}^{3 / 4} \hspace{5pt}\hspace{5pt} \mbox{holds}. 
$$  
\end{thm}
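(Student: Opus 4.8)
The plan is to reduce the inequality on $\Omega$ to the corresponding homogeneous, scale-invariant inequality on the whole space $\mathbb{R}^d$, prove the latter directly, and then transfer back via a Sobolev extension operator. A quick scaling check with $u_\lambda(x)=u(\lambda x)$ confirms that the only admissible interpolation exponent for $\|u\|_{L^4}$ between $\|u\|_{L^2}$ and $\|\nabla u\|_{L^2}$ is $\theta=d\left(\tfrac12-\tfrac14\right)$, i.e. $\theta=\tfrac12$ when $d=2$ and $\theta=\tfrac34$ when $d=3$, which matches the two claimed estimates and fixes the powers. Since $\partial\Omega\in C^1$, there is a bounded linear extension operator $E\colon H^1(\Omega)\to H^1(\mathbb{R}^d)$; taking the standard reflection (or Stein) construction, the same operator is bounded from $L^2(\Omega)$ to $L^2(\mathbb{R}^d)$, so that $\|E\phi\|_{L^2(\mathbb{R}^d)}\le M\|\phi\|_{L^2(\Omega)}$ and $\|\nabla E\phi\|_{L^2(\mathbb{R}^d)}\le M\|\phi\|_{H^1(\Omega)}$. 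As $\|\phi\|_{L^4(\Omega)}\le\|E\phi\|_{L^4(\mathbb{R}^d)}$, it suffices to establish $\|u\|_{L^4(\mathbb{R}^d)}\le M\|u\|_{L^2(\mathbb{R}^d)}^{1-\theta}\|\nabla u\|_{L^2(\mathbb{R}^d)}^{\theta}$ for the above $\theta$ and then substitute $u=E\phi$; by density it is enough to treat $u\in C_c^\infty(\mathbb{R}^d)$ and pass to the limit.

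\emph{The case $d=2$.} Here I would use the slicing argument: writing $|u(x_1,x_2)|^2=\int_{-\infty}^{x_1}\partial_1|u|^2\,ds$ gives $|u(x_1,x_2)|^2\le 2\int_{\mathbb{R}}|u|\,|\partial_1u|\,dx_1$ for every $x_1$, and symmetrically with $\partial_2$. Multiplying the two pointwise bounds, integrating over $\mathbb{R}^2$, and estimating each resulting factor by Cauchy–Schwarz yields $\|u\|_{L^4}^4\le 4\|u\|_{L^2}^2\|\partial_1u\|_{L^2}\|\partial_2u\|_{L^2}\le 2\|u\|_{L^2}^2\|\nabla u\|_{L^2}^2$, which is exactly the assertion for $\theta=\tfrac12$.

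\emph{The case $d=3$.} Here I would combine the critical Sobolev embedding $\|u\|_{L^6(\mathbb{R}^3)}\le M\|\nabla u\|_{L^2(\mathbb{R}^3)}$ with the elementary Hölder interpolation $\|u\|_{L^4}\le\|u\|_{L^2}^{1/4}\|u\|_{L^6}^{3/4}$ (the split $\tfrac14=\tfrac14\cdot\tfrac12+\tfrac34\cdot\tfrac16$), which immediately gives $\|u\|_{L^4}\le M\|u\|_{L^2}^{1/4}\|\nabla u\|_{L^2}^{3/4}$, the case $\theta=\tfrac34$. On the bounded domain this case can alternatively be carried out without any extension, using $H^1(\Omega)\hookrightarrow L^6(\Omega)$ directly and interpolating in $\Omega$.

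The step I expect to require the most care is preserving the \emph{product} structure of the right-hand side rather than settling for a crude bound $\|\phi\|_{L^4(\Omega)}\le M\|\phi\|_{H^1(\Omega)}$: this forces the use of the homogeneous whole-space inequality together with an extension operator that is bounded on $L^2$ and on $H^1$ simultaneously, so that the $L^2$ factor is not inflated into an $H^1$ factor. In two dimensions the additional subtlety is that $H^1$ just fails to embed into $L^\infty$, so no clean Sobolev exponent is available and the elementary slicing identity is precisely what makes the sharp $\tfrac12$–$\tfrac12$ interpolation work.
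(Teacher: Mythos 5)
Your proof is correct, but note that the paper itself contains no proof of this statement: Theorem~\ref{gagliardo} is quoted as a known result with citations, so your argument is a self-contained substitute for a citation rather than an alternative to an in-paper argument. On the merits, your route is the classical one and all steps check out: the scaling computation correctly forces $\theta=d/4$ (so $\theta=\tfrac12$ for $d=2$ and $\theta=\tfrac34$ for $d=3$); the $C^1$ boundary yields an extension operator bounded simultaneously on $L^2$ and $H^1$, and you rightly identify this simultaneous boundedness as the point that preserves the multiplicative structure $\|\phi\|_{L^2}^{1-\theta}\|\phi\|_{H^1}^{\theta}$ instead of collapsing it to a crude $\|\phi\|_{H^1}$ bound; the $d=2$ Ladyzhenskaya slicing is executed with the right bookkeeping, since the two pointwise factors depend on complementary variables, so Fubini factors the double integral and Cauchy--Schwarz plus $\|\partial_1 u\|_{L^2}\|\partial_2 u\|_{L^2}\le\tfrac12\|\nabla u\|_{L^2}^2$ gives $\|u\|_{L^4}^4\le 2\|u\|_{L^2}^2\|\nabla u\|_{L^2}^2$; and the $d=3$ case follows from the H\"older split $\tfrac14=\tfrac14\cdot\tfrac12+\tfrac34\cdot\tfrac16$ combined with the critical embedding $\dot H^1(\mathbb{R}^3)\hookrightarrow L^6(\mathbb{R}^3)$. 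Your closing observations are also accurate and worth keeping: for $d=3$ the extension is dispensable because the stated inequality carries the full inhomogeneous $H^1(\Omega)$ norm, so $H^1(\Omega)\hookrightarrow L^6(\Omega)$ plus in-domain interpolation suffices, whereas for $d=2$ no finite Sobolev exponent reaches $\theta=\tfrac12$ by H\"older interpolation alone (the limiting failure of $H^1\hookrightarrow L^\infty$), so the slicing identity after extension, or some equivalent device, is genuinely required there.
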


\begin{thm}[Aubin-Lions, cf. {\cite[Lemma~3]{migorski2019nonmonotone}} ]\label{aubin-lions}
Let $X_{1}, X_{2}$ and $X_{3}$ be reflexive Banach spaces.  If the embeddings $X_1\hookrightarrow X_2$ and $X_2\hookrightarrow X_3$ are compact and continuous respectively, then for any $p,q\in (1,\infty)$ and $T>0$ we have the embedding 
$$
\left\{\phi \in L^{p}(0, T; X_{1}): \dfrac{\partial \phi}{\partial t} \in L^{q}(0, T; X_{3})\right\}\hookrightarrow  L^{p}(0, T; X_{2})$$ 
is compact where $\displaystyle \frac{\partial \phi}{\partial t}$ denotes the weak partial derivative with respect to the time variable. Furthermore, the embedding   \[\left\{\phi \in L^{p}(0, T ; X_{1}) : \dfrac{\partial \phi}{\partial t} \in L^{q}(0, T ; X_{3})\right\}\hookrightarrow C([0, T] ; X_{2}) \] is also compact provided $p=\infty$ and  $q>1$ is arbitrary. 
\end{thm}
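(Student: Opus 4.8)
This statement is the Aubin--Lions--Simon compactness lemma, and since it is classical, my plan is to reduce it to two standard ingredients: an interpolation (Ehrling) inequality that exploits the \emph{compactness} of the embedding $X_1 \hookrightarrow X_2$, and a uniform-in-time equicontinuity estimate that exploits the bound on the weak time derivative in $X_3$. Throughout I would fix an arbitrary bounded sequence $\{\phi_n\}$ in $W := \{\phi \in L^p(0,T;X_1) : \partial_t\phi \in L^q(0,T;X_3)\}$; it then suffices to extract a subsequence converging strongly in $L^p(0,T;X_2)$ for the first embedding, and in $C([0,T];X_2)$ for the second.

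First I would establish Ehrling's lemma: for every $\eta>0$ there exists $C_\eta>0$ such that
\[
\|v\|_{X_2} \le \eta\,\|v\|_{X_1} + C_\eta\,\|v\|_{X_3} \qquad \text{for all } v \in X_1.
\]
This is proved by contradiction and normalization: if it failed, there would exist $\eta_0>0$ and $v_m\in X_1$ with $\|v_m\|_{X_2}=1$ but $1 > \eta_0\|v_m\|_{X_1} + m\|v_m\|_{X_3}$; then $\{v_m\}$ is bounded in $X_1$, so compactness of $X_1 \hookrightarrow X_2$ yields a subsequence converging in $X_2$, while $\|v_m\|_{X_3}\to 0$ forces the limit to vanish in $X_3$ and hence, by continuity of $X_2 \hookrightarrow X_3$, in $X_2$, contradicting $\|v_m\|_{X_2}=1$.

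Next I would control time translates. For $0<h<T$, writing $\phi_n(t+h)-\phi_n(t)=\int_t^{t+h}\partial_s\phi_n(s)\,ds$ in $X_3$ and combining H\"older's inequality with Fubini's theorem gives the uniform estimate
\[
\|\phi_n(\cdot+h)-\phi_n(\cdot)\|_{L^q(0,T-h;X_3)} \le h\,\|\partial_t\phi_n\|_{L^q(0,T;X_3)} \le M h,
\]
so the $X_3$-norm of the translates vanishes as $h\to 0$, uniformly in $n$. Applying Ehrling's inequality to $v=\phi_n(t+h)-\phi_n(t)$ and integrating in $t$, the $X_1$-term stays bounded by $2\eta\,\|\phi_n\|_{L^p(0,T;X_1)}$ while the $X_3$-term tends to $0$; choosing $\eta$ small and then $h$ small transfers equicontinuity to the $X_2$-norm. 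Pointwise relative compactness in $X_2$ comes for free from boundedness in $X_1$ and the compact embedding, so the vector-valued Fr\'echet--Kolmogorov criterion yields relative compactness of $\{\phi_n\}$ in $L^p(0,T;X_2)$, proving the first embedding. For the second, when $p=\infty$ and $q>1$ the bound $\|\phi_n(t)-\phi_n(s)\|_{X_3}\le |t-s|^{1/q'}\|\partial_t\phi_n\|_{L^q(0,T;X_3)}$ (with $q'$ the conjugate exponent) furnishes a uniform H\"older modulus of continuity in $X_3$; Ehrling again upgrades this to equicontinuity in $X_2$, while boundedness in $L^\infty(0,T;X_1)$ gives pointwise relative compactness in $X_2$, so Arzel\`a--Ascoli produces a subsequence converging in $C([0,T];X_2)$.

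The hard part will be Steps 2--3, namely establishing the \emph{uniform} (in $n$) equicontinuity of the time translates and feeding it, via Ehrling, into the appropriate compactness criterion. In the first case one must reconcile the integrability exponents so that the Fr\'echet--Kolmogorov criterion applies cleanly on the shrinking interval $(0,T-h)$ and handle the boundary layer near the endpoints; in the second case the hypothesis $q>1$ is precisely what is needed to obtain a genuine (H\"older) modulus of continuity in time, without which Arzel\`a--Ascoli cannot be invoked.
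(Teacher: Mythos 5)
The paper does not actually prove this statement: Theorem~\ref{aubin-lions} is quoted as a known preliminary result, with the proof deferred to the cited literature (Lemma~3 of \cite{migorski2019nonmonotone}; the result is the Aubin--Lions--Simon lemma). So there is no in-paper argument to compare against; what you have written is the standard Simon-style proof, and in outline it is the right one: Ehrling's interpolation inequality (your contradiction proof of it is correct, including the use of injectivity of $X_2\hookrightarrow X_3$ to conclude the limit vanishes in $X_2$), uniform smallness of time translates in $X_3$ from the bound on $\partial_t\phi$, the Fr\'echet--Kolmogorov criterion for $L^p(0,T;X_2)$, and Arzel\`a--Ascoli for the $p=\infty$ case with the H\"older-$1/q'$ modulus that $q>1$ provides. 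Note that your argument never uses reflexivity of the $X_i$, so it is in fact slightly more general than the statement as given, which is consistent with Simon's formulation.

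Two steps you flagged but did not close deserve to be closed explicitly. First, the exponent mismatch: your translate estimate gives smallness of $\phi_n(\cdot+h)-\phi_n(\cdot)$ in $L^{q}(0,T-h;X_3)$, but feeding Ehrling into Fr\'echet--Kolmogorov requires smallness in $L^{p}(0,T-h;X_3)$, and when $q<p$ interpolation between $L^q$ and the bounded $L^p$ norm only yields smallness in $L^r$ for $r<p$, which is not enough. The correct repair is to estimate the majorant $g_h(t):=\int_t^{t+h}\|\partial_s\phi_n(s)\|_{X_3}\,\mathrm{d} s$ directly: one has $\|g_h\|_{L^\infty(0,T-h)}\leq \|\partial_t\phi_n\|_{L^1(0,T;X_3)}$ and $\|g_h\|_{L^1(0,T-h)}\leq h\,\|\partial_t\phi_n\|_{L^1(0,T;X_3)}$, whence $\|g_h\|_{L^p}\leq \|g_h\|_{L^\infty}^{1-1/p}\|g_h\|_{L^1}^{1/p}\leq h^{1/p}\,T^{1/q'}\|\partial_t\phi_n\|_{L^q(0,T;X_3)}$, giving the needed uniform $L^p(X_3)$ smallness for every $p<\infty$ (indeed this shows $q=1$ already suffices for the first embedding). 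Second, your claim that ``pointwise relative compactness in $X_2$ comes for free from boundedness in $X_1$'' is imprecise in the $L^p$ setting: boundedness in $L^p(0,T;X_1)$ gives no pointwise-in-$t$ control, and the Fr\'echet--Kolmogorov (Simon) criterion should instead be verified through local time-averages $\frac{1}{t_2-t_1}\int_{t_1}^{t_2}\phi_n\,\mathrm{d} t$, which lie in a fixed bounded subset of $X_1$ by H\"older and are therefore relatively compact in $X_2$. In the $C([0,T];X_2)$ case the pointwise statement is legitimate, since the $L^\infty(X_1)$ bound together with continuity into $X_3$ places every value $\phi_n(t)$ in a fixed ball of $X_1$. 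With these two repairs your proposal is a complete and correct proof of the quoted theorem.
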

\begin{thm}[Young's inequality with $\epsilon$] \label{youngs inequaity}
  Let $p,q$ be positive real numbers satisfying $\frac{1}{p} + \frac{1}{q} = 1$, and $a,b$ are non-negative real numbers. Then for any given $\epsilon  > 0 $, there exist a non-negative constant say $M_{\epsilon}$ such that \[a b \leq \epsilon a^p + M_{\epsilon} b^q.\]  
\end{thm}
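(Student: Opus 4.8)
The plan is to derive the stated $\epsilon$-dependent inequality from the classical Young's inequality, namely that for conjugate exponents $p,q$ (so $\tfrac1p+\tfrac1q=1$) and nonnegative reals $x,y$ one has $xy\le \tfrac{x^p}{p}+\tfrac{y^q}{q}$, combined with a simple homogeneity (rescaling) argument. This two-step structure isolates the only analytic content (the classical bound) from the bookkeeping needed to produce the free parameter $\epsilon$ and the companion constant $M_\epsilon$.

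First I would establish the classical inequality. If either $a=0$ or $b=0$ the claim is immediate, so assume $a,b>0$. The cleanest route is to exploit the convexity of the exponential function: writing $a=\exp(u/p)$ and $b=\exp(v/q)$ with $u=p\log a$ and $v=q\log b$, and using $\tfrac1p+\tfrac1q=1$ as convex weights, the convexity of $t\mapsto e^t$ (equivalently, Jensen's inequality) gives
$$ ab = \exp\!\Big(\tfrac{u}{p}+\tfrac{v}{q}\Big)\le \tfrac1p e^{u}+\tfrac1q e^{v}=\tfrac{a^p}{p}+\tfrac{b^q}{q}. $$
Alternatively, one may fix $b$ and minimize $a\mapsto \tfrac{a^p}{p}+\tfrac{b^q}{q}-ab$ over $a\ge0$, whose unique critical point $a=b^{q/(p-1)}$ yields the same bound; either derivation is elementary.

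Next I would introduce a free scaling parameter. For any $\delta>0$, apply the classical inequality to the pair $(\delta a,\,\delta^{-1}b)$, which leaves the product $ab$ unchanged:
$$ ab=(\delta a)(\delta^{-1}b)\le \frac{\delta^{p}}{p}\,a^{p}+\frac{1}{q\,\delta^{q}}\,b^{q}. $$
The coefficient of $a^p$ can be forced to equal the prescribed $\epsilon$ by choosing $\delta=(p\epsilon)^{1/p}$, which is admissible since $\epsilon>0$. With this choice the coefficient of $b^q$ becomes the nonnegative constant $M_\epsilon:=\big(q\,(p\epsilon)^{q/p}\big)^{-1}$, giving exactly $ab\le \epsilon a^p+M_\epsilon b^q$, as claimed.

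The argument is entirely elementary, so I anticipate no genuine obstacle; the only points requiring care are the trivial boundary cases $a=0$ or $b=0$ (handled separately) and checking that the rescaling is legitimate, which it is because $ab$ is invariant under $(a,b)\mapsto(\delta a,\delta^{-1}b)$ and $\delta>0$ keeps all quantities nonnegative. I would also note that $M_\epsilon$ depends only on $p,q$ and $\epsilon$, consistent with its intended role as a generic, context-dependent constant in the a priori energy estimates carried out later in the paper.
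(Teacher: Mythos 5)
Your proof is correct and complete: the paper states this result as a standard preliminary (Theorem~\ref{youngs inequaity}) without giving any proof, and your two-step argument---classical Young's inequality via convexity of the exponential, followed by the rescaling $(a,b)\mapsto(\delta a,\delta^{-1}b)$ with $\delta=(p\epsilon)^{1/p}$ yielding $M_\epsilon=\bigl(q\,(p\epsilon)^{q/p}\bigr)^{-1}$---is exactly the standard derivation one would supply. One trivial slip in your parenthetical alternative: minimizing $a\mapsto \tfrac{a^p}{p}+\tfrac{b^q}{q}-ab$ gives the critical point $a=b^{1/(p-1)}=b^{q/p}$, not $b^{q/(p-1)}$; this does not affect your main argument, which stands on the convexity derivation alone.
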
 

\begin{thm}[Comparison principle for ODEs, cf. {\cite{lakshmikantham1969differential}}] \label{Comparison principle}
Let $f:[0, T]\times\mathbb{R}\to\mathbb{R}$ be continuous and nondecreasing in its second argument.
Assume that $y_1,y_2:[0,T]\to\mathbb{R}$ are absolutely continuous functions such that
\[
y_1'(t)\le f(t,y_1(t)), \qquad
y_2'(t)=f(t,y_2(t)) \quad \text{for almost every } t\in(0,T),
\]
with
\[
y_1(0)\le y_2(0).
\]
Then
\[
y_1(t)\le y_2(t)\quad \text{for all } t\in[0,T].
\]
\end{thm}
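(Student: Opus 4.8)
The plan is to prove the inequality directly by a perturbation (Kamke--Chaplygin) argument, since the mere monotonicity of $f$ in its second slot is too weak to run a naive Gronwall estimate on the difference $w:=y_1-y_2$: writing $w'(t)\le f(t,y_1(t))-f(t,y_2(t))$ for a.e.\ $t$, the nondecreasing hypothesis only tells us that the right-hand side has a definite sign when $y_1>y_2$, not a bound of the form $w'\le L\,w$ (that would require a one-sided Lipschitz condition, which we do not assume). I would therefore compare $y_1$ not with $y_2$ itself but with a family of strict supersolutions obtained by perturbing the equation upward.

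Concretely, for each $\epsilon>0$ let $z_\epsilon$ solve the perturbed problem $z_\epsilon'(t)=f(t,z_\epsilon(t))+\epsilon$ with $z_\epsilon(0)=y_2(0)+\epsilon$; since $f$ is continuous, Peano's theorem supplies a $C^1$ solution on $[0,T]$ (after an a priori bound to rule out blow-up on the finite interval). The key step is to show $y_1(t)<z_\epsilon(t)$ for all $t\in[0,T]$. Suppose not, and set $\tau:=\inf\{t: y_1(t)=z_\epsilon(t)\}$; then $y_1<z_\epsilon$ on $[0,\tau)$ with $y_1(\tau)=z_\epsilon(\tau)$, and $\tau>0$ because $y_1(0)\le y_2(0)<z_\epsilon(0)$. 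On $[0,\tau)$ we have $z_\epsilon>y_1$, so by the monotonicity of $f$ in its second argument $f(s,z_\epsilon(s))\ge f(s,y_1(s))$, whence for a.e.\ $s\in(0,\tau)$
\[
(z_\epsilon-y_1)'(s)=f(s,z_\epsilon(s))+\epsilon-y_1'(s)\ge f(s,z_\epsilon(s))+\epsilon-f(s,y_1(s))\ge \epsilon .
\]
Integrating and using $(z_\epsilon-y_1)(0)\ge\epsilon$ gives $(z_\epsilon-y_1)(\tau)\ge \epsilon+\epsilon\tau>0$, contradicting $y_1(\tau)=z_\epsilon(\tau)$. This is precisely the point at which the nondecreasing hypothesis is indispensable.

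It remains to pass to the limit $\epsilon\downarrow0$. I would first obtain a uniform bound on $z_\epsilon$ and, via the equation, on $z_\epsilon'$, so that $\{z_\epsilon\}$ is equibounded and equicontinuous; Arzel\`a--Ascoli then yields a subsequence converging uniformly on $[0,T]$ to some $z_0$, and passing to the limit in the integral form $z_\epsilon(t)=z_\epsilon(0)+\int_0^t\!\big(f(s,z_\epsilon(s))+\epsilon\big)\,ds$ shows $z_0'=f(t,z_0)$ with $z_0(0)=y_2(0)$. Together with $y_1\le z_\epsilon$ this gives $y_1(t)\le z_0(t)$. The main obstacle is the identification $z_0=y_2$: for merely continuous $f$ the limit of upward perturbations is the \emph{maximal} solution, so one genuinely needs uniqueness of the initial value problem $y'=f(t,y),\ y(0)=y_2(0)$ to conclude $z_0=y_2$ and hence $y_1\le y_2$. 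In the present application this uniqueness is automatic, since the relevant $f$ (built from the quadratic reaction $\kappa\,c(c-1)$) is locally Lipschitz in $y$; indeed, under such a local (or one-sided) Lipschitz assumption one may bypass the perturbation entirely, obtaining $w'\le L\,w$ with $w(0)\le0$, so that Gronwall's inequality yields $w\le0$ directly.
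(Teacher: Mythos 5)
The paper does not actually prove this statement: it appears in the preliminaries as a quoted classical result with a citation to Lakshmikantham--Leela, so there is no internal proof to compare against. Your perturbation (Kamke--Chaplygin) argument is the standard proof of that classical comparison theorem, and it is essentially correct as far as it goes: the first-crossing contradiction for $z_\epsilon$ is sound, and your integrated form of the estimate $(z_\epsilon-y_1)(\tau)\ge \epsilon+\epsilon\tau$ is exactly the right way to handle an absolutely continuous $y_1$ without worrying about pointwise derivatives at the crossing time; the Arzel\`a--Ascoli limit passage is routine.

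More importantly, you have correctly diagnosed that the theorem as printed is not quite true: with $f$ merely continuous and nondecreasing, the limit $z_0$ of the upward perturbations is the \emph{maximal} solution, and a subsolution need not lie below an arbitrary solution $y_2$. Concretely, take $f(t,y)=\sqrt{\max\{y,0\}}$, $y_2\equiv 0$ and $y_1(t)=t^2/4$: all hypotheses of the statement hold (monotonicity included), yet $y_1(t)>y_2(t)$ for $t>0$. So uniqueness for the initial value problem of $y_2$ (or the requirement that $y_2$ be the maximal solution) is genuinely needed, exactly as you say. Note that the same issue hides inside your parenthetical ``a priori bound to rule out blow-up'': without uniqueness, the maximal solution through $(0,y_2(0))$ --- and hence $z_\epsilon$ for small $\epsilon$ --- can blow up before $T$ even though $y_2$ exists globally, so that bound is not available in general and is restored precisely by the uniqueness hypothesis. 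In the paper's only application (Lemma \ref{lemma1}) the right-hand side is $f(y)=M_\epsilon\bigl(y+y^{(6-d)/(4-d)}\bigr)$ with exponent $2$ or $3$, which is locally Lipschitz on the relevant range, so uniqueness holds and your argument (or the direct Gronwall shortcut) applies. One small slip in that shortcut: monotonicity plus a Lipschitz bound yields $w'\le L\lvert w\rvert$ rather than $w'\le Lw$; to conclude $w\le 0$ one should either invoke a one-sided Lipschitz condition or run Gronwall on $w^+:=\max\{w,0\}$, e.g. via $\frac{\mathrm{d}}{\mathrm{d}t}(w^+)^2\le 2L\,(w^+)^2$ almost everywhere.
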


Below, we discuss the concept of weak and strong solutions of the system \eqref{model}]\label{def: weak solution}.
\begin{definition}[Weak and Strong Solutions of System~\eqref{model}]\label{def: weak solution}
We say that a pair \( (c, \boldsymbol{u}) \)
satisfying \[c \in L^\infty(0,T;L^{2}(\Omega)) \cap L^2(0,T;H^1(\Omega)), \  \frac{\partial c}{\partial t} \in L^2(0,T;(H^1(\Omega))^{\ast}), \ \boldsymbol{u} \in L^{\infty}(0,T;\boldsymbol{S}) \cap L^2(0,T; \boldsymbol{V}) \ \mbox{and} \  \frac{\partial \boldsymbol{u}}{\partial t} \in L^2(0,T;\boldsymbol{V}^{\ast})\] 
is a weak solution for the system of initial boundary value problems (IBVP)    \eqref{model1}–\eqref{initial conditions} provided  $c$ and $\boldsymbol{u}$ satisfy the initial conditions \eqref{initial conditions}  and for all $({\bf v},\phi)\in {\bf V}\times H^1(\Omega)$ the following variational  identities 
 \begin{align}\label{weak1}
   \innerproduct{\frac{\partial \boldsymbol{u}(t)}{\partial t},\boldsymbol{v}} +\br{\frac{\mu(c(t))}{K}\boldsymbol{u}(t)}{\boldsymbol{v}} + \br{\beta|\boldsymbol{u}(t)|\boldsymbol{u}(t)}{\boldsymbol{v}} + \mu_{e}\br{\boldsymbol{\nabla}\boldsymbol{u}(t)}{\boldsymbol{\nabla}\boldsymbol{v }} = \br{\boldsymbol{f}}{\boldsymbol{v}} 
\end{align}
and 
\begin{align}\label{weak2}
    \innerproduct{\frac{\partial c(t)} {\partial t},\phi} - \br{c(t)\boldsymbol{u}(t)}{\boldsymbol{\nabla}\phi} + D \br{\boldsymbol{\nabla}c(t)}{\boldsymbol{\nabla}\phi} = \br{\kappa\, c(t)(c(t)-1)}{\phi} 
\end{align}
hold for almost every $t\in (0,T)$. Furthermore, a weak solution $(c, \boldsymbol{u})$ is called a strong solution if it possesses the additional regularity \[c \in  L^2(0,T; H^2(\Omega)), \  \frac{\partial c}{\partial t} \in L^2(0,T; L^2(\Omega)), \ \boldsymbol{u} \in L^2(0,T;\boldsymbol{V}\cap \boldsymbol{H}^2(\Omega)) \ \mbox{and} \  \frac{\partial \boldsymbol{u}}{\partial t} \in L^2(0,T;\boldsymbol{S}).\]    
\end{definition}

\section{Existence, Uniqueness and Continuous Dependence of Weak Solution 
}\label{sec: Existence}
\subsection{Existence of a weak solution}
This subsection is devoted to proving the existence of a weak solution of IBVP given by \eqref{model1}-\eqref{assumption:beta-k}. Under certain assumptions on initial data $(c_0,{\bf u_0})$, we prove the existence of weak solution to the aforementioned system of IBVP. More precisely, we prove the following theorem. 

\begin{thm}\label{thm:existence}
If $(c_0,\boldsymbol{u}_0) \in L^2(\Omega)\times \boldsymbol{S}$ is such that   \( 0 \leq c_0  \leq 1 \), almost everywhere in \( \Omega \), then there exists a weak solution for the system of IBVP ~\eqref{model1}--\eqref{assumption:beta-k}. 
\end{thm}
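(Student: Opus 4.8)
The plan is to combine a truncation device with the Faedo--Galerkin method: first solve a globally Lipschitz truncated system, obtain $n$-uniform energy estimates, pass to the limit, and finally remove the truncation by means of a maximum principle that is available precisely because $0\le c_0\le 1$.

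\emph{Truncation and Galerkin scheme.} I would fix the cutoff $\mathcal{T}(s):=\max\{0,\min\{s,1\}\}$ and, in \eqref{weak1}--\eqref{weak2}, replace $\mu(c)$ by $\mu(\mathcal{T}(c))$ and the reaction $\kappa\,c(c-1)$ by $\kappa\,\mathcal{T}(c)\bigl(\mathcal{T}(c)-1\bigr)$. Since $\mu$ is locally Lipschitz and $\mathcal{T}$ takes values in $[0,1]$, the composition $\mu\circ\mathcal{T}$ is bounded and globally Lipschitz, and the truncated reaction is bounded and Lipschitz; crucially it vanishes whenever $c\notin[0,1]$, which is the structural property needed for the maximum principle below. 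I would then take a basis $\{\boldsymbol{w}_j\}$ of $\boldsymbol{V}$ (Stokes eigenfunctions, orthonormal in $\boldsymbol{S}$) and a basis $\{\psi_j\}$ of $H^1(\Omega)$ (Neumann--Laplacian eigenfunctions), look for $\boldsymbol{u}_n=\sum_{j=1}^n a_j^n(t)\boldsymbol{w}_j$ and $c_n=\sum_{j=1}^n b_j^n(t)\psi_j$, and project the truncated weak formulation onto these subspaces. Because the resulting nonlinearities in the coefficients $(a^n,b^n)$ are continuous (indeed locally Lipschitz) and bounded, Carath\'eodory/Peano theory provides a local-in-time solution of the Galerkin ODE system, which the a priori bounds below extend to all of $[0,T]$.

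\emph{A priori estimates and compactness.} Testing the momentum equation with $\boldsymbol{u}_n$ and dropping the two nonnegative terms coming from $\mu\circ\mathcal{T}\ge0$ and $\beta\ge0$ (recall \eqref{assumption:beta-k}), then handling $\br{\boldsymbol{f}}{\boldsymbol{u}_n}$ by Young's inequality (Theorem~\ref{youngs inequaity}) and Poincar\'e, bounds $\boldsymbol{u}_n$ in $L^\infty(0,T;\boldsymbol{S})\cap L^2(0,T;\boldsymbol{V})$ and also controls $\int_0^T\!\!\int_\Omega\beta|\boldsymbol{u}_n|^3$. Testing the transport equation with $c_n$, the convective term $\br{c_n\boldsymbol{u}_n}{\boldsymbol{\nabla}c_n}$ vanishes because $\boldsymbol{u}_n\in\boldsymbol{V}$ is divergence free and vanishes on $\partial\Omega$, while the truncated reaction is now bounded; this yields $c_n$ bounded in $L^\infty(0,T;L^2(\Omega))\cap L^2(0,T;H^1(\Omega))$. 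Using the Gagliardo--Nirenberg inequality (Theorem~\ref{gagliardo}) together with the embedding $\boldsymbol{V}\hookrightarrow (L^3(\Omega))^d$ to estimate the Forchheimer term $\beta|\boldsymbol{u}_n|\boldsymbol{u}_n$ in $L^2(0,T;\boldsymbol{V}^\ast)$ and the flux $c_n\boldsymbol{u}_n$ in $L^2(0,T;(L^{3/2}(\Omega))^d)$, I would bound $\partial_t\boldsymbol{u}_n$ in $L^2(0,T;\boldsymbol{V}^\ast)$ and $\partial_t c_n$ in $L^2(0,T;(H^1(\Omega))^\ast)$. Banach--Alaoglu then extracts weak and weak-$\ast$ limits $(\boldsymbol{u},c)$, and Aubin--Lions (Theorem~\ref{aubin-lions}) upgrades these to strong convergence in $L^2(0,T;L^2(\Omega))$ together with a.e.\ convergence along a subsequence.

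\emph{Passage to the limit, maximum principle, and removal of truncation.} The a.e.\ convergence of $c_n$ with the continuity and boundedness of $\mu\circ\mathcal{T}$ and of the truncated reaction lets me pass to the limit in the viscosity and reaction terms by dominated convergence; the strong convergence of $\boldsymbol{u}_n$ handles the monotone Forchheimer term, and $c_n\boldsymbol{u}_n\to c\boldsymbol{u}$ follows by pairing strong convergence of one factor with weak convergence of the other. This produces a weak solution of the truncated system. The final and most delicate step is the maximum principle: testing the transport equation with $(c-1)^+$ and with the negative part $c^-:=\max\{-c,0\}$ is admissible since $c\in L^2(0,T;H^1(\Omega))$ and $\partial_t c\in L^2(0,T;(H^1(\Omega))^\ast)$, so that $\langle\partial_t c,(c-1)^+\rangle=\tfrac12\tfrac{d}{dt}\nr{(c-1)^+}$ by the Gelfand-triple chain rule. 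On $\{c>1\}$ and $\{c<0\}$ the truncated reaction vanishes and the convective term vanishes by incompressibility, so only the nonnegative diffusion term remains; since $0\le c_0\le1$ forces $\nr{(c_0-1)^+}=\nr{c_0^-}=0$, I conclude $(c-1)^+\equiv0$ and $c^-\equiv0$, that is $0\le c\le1$ a.e. Therefore $\mathcal{T}(c)=c$, the truncated nonlinearities coincide with the original ones, and $(\boldsymbol{u},c)$ is a weak solution of \eqref{model1}--\eqref{assumption:beta-k}. I expect the passage to the limit in the coupled nonlinear terms and the rigorous justification of the maximum principle (in particular the admissibility of the nonsmooth test functions at the limit rather than at the Galerkin level) to be the main obstacle.
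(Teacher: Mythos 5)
Your proposal is correct and reaches the theorem by a genuinely different, and in several respects leaner, route than the paper. The paper truncates only the viscosity, at a level $\ell\in\mathbb{N}$, and keeps the quadratic reaction $\kappa\,c(c-1)$ intact; as a consequence its Galerkin energy estimates are only local in time (Lemma~\ref{lemma1} compares with the superlinear ODE $z'=M_\epsilon(z+z^{(6-d)/(4-d)})$ to produce a $T_{\max}$), the upper bound on $c_\ell$ is obtained by a contradiction argument with a first touching time and spatial cutoff functions (Lemma~\ref{max lemma}), the local solution is then extended to $(0,T)$ by a continuation argument (Lemma~\ref{Global existence for truncated}), and finally a second limit $\ell\to\infty$ removes the viscosity truncation. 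You instead truncate \emph{both} the viscosity and the reaction at the threshold level $1$, so that all nonlinearities in $c$ are bounded and globally Lipschitz: the Galerkin bounds are then global in time at once, and the entire $T_{\max}$/comparison-ODE/continuation machinery, as well as the extra limit in $\ell$, disappears. Your maximum principle is the classical Stampacchia truncation — testing with $(c-1)^+$ and $c^-$ at the limit level — and it exploits precisely the structural property you engineered (the truncated reaction vanishes off $[0,1]$), after which $\mathcal{T}(c)=c$ restores the original system. What the paper's more laborious contradiction argument buys is the strict inequality $c<1$ for $c_0<1$, which it reuses later for the decay rate in Theorem~\ref{thm: exponential decay}; your argument yields the closed bound $0\le c\le 1$, which is all that Theorem~\ref{thm:existence} requires.

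Two technical points deserve attention, neither fatal. First, the identity $\innerproduct{\partial_t c,(c-1)^+}=\tfrac12\tfrac{\mathrm{d}}{\mathrm{d}t}\nr{(c-1)^+}$ and the vanishing of the convective term against these nonsmooth test functions need justification: the chain rule for Lipschitz truncations in the Gelfand triple is standard, but since $c$ is not yet known to be bounded, the convective term requires writing $c\,\chi_{\{c>1\}}\boldsymbol{\nabla}c=\boldsymbol{\nabla}\Psi(c)$ with $\Psi(c)$ only in $W^{1,3/2}(\Omega)$ in three dimensions, and then a density argument using $\boldsymbol{u}\in\boldsymbol{V}\hookrightarrow (L^3(\Omega))^d$; you flagged this yourself, correctly, as the delicate step. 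Second, your $L^2(0,T;(L^{3/2}(\Omega))^d)$ bound on the flux $c_n\boldsymbol{u}_n$ pairs with $\boldsymbol{\nabla}\phi\in L^3(\Omega)$, not with arbitrary $\phi\in H^1(\Omega)$, so in $d=3$ it gives $\partial_t c_n$ bounded in $L^2(0,T;(W^{1,3}(\Omega))^\ast)$ — or, estimating instead via $\|c_n\|_{L^4}\|\boldsymbol{u}_n\|_{L^4}\in L^{4/3}(0,T)$, in $L^{4/3}(0,T;(H^1(\Omega))^\ast)$ — rather than in $L^2(0,T;(H^1(\Omega))^\ast)$ as stated; either bound still feeds the Aubin--Lions lemma (Theorem~\ref{aubin-lions} only needs some $q>1$), so the compactness step survives unchanged. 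Incidentally, your $\boldsymbol{V}^\ast$ estimate of the Forchheimer term through $\|\boldsymbol{u}_n\|_{L^3}^2$ together with the interpolation bound $\boldsymbol{u}_n\in L^4(0,T;(L^3(\Omega))^d)$ is sharper than the paper's claim that $|\boldsymbol{u}_n|\boldsymbol{u}_n$ is bounded in $L^2(0,T;\boldsymbol{L}^2(\Omega))$, which the stated energy bounds do not justify in three dimensions.
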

We will prove the above theorem by regularizing the viscosity by truncation process, which is given as follows.  For a fixed truncation parameter $\ell\in\mathbb{N}$, we define the truncated viscosity denoted by $\widetilde{\mu}_{\ell}$ by \[\widetilde{\mu}_{\ell}(s) := \mu\big(\max\{0, \min\{s, \ell\}\}\big),\ s\in \mathbb{R}\] 
Now, since $\mu$ is non-negative preserving and locally Lipschitz, it is bounded on the compact set $[0, \ell]$ for any $\ell\in\mathbb{N}$. This further implies that  there exists a constant $\mu_{\ell} > 0$, depending on $\ell$ such that the following holds 
\begin{equation} \label{eq:trunc_viscosity} 0 \leq \widetilde{\mu}_{\ell}(s) \leq \mu_{\ell},\ \mbox{for all $s \in \mathbb{R}$.}
\end{equation}
We first prove the existence of a sequence of weak solutions denoted by  $\{(c_\ell, \boldsymbol{u}_\ell)\}_{l\in \mathbb{N}}$ to the system of IBVP \eqref{model1}-\eqref{assumption:beta-k} when $\mu$ is replaced by $\widetilde{\mu}_{\ell}$, $\ell\in\mathbb{N}$. We say that  the system of IBVP \eqref{model1}-\eqref{assumption:beta-k} is a truncated system when $\mu=\widetilde{\mu}_{\ell}$, $\ell\in \mathbb{N}$. 
Our first aim is to prove the uniform boundedness of the sequence of weak solutions  $\{(c_\ell, \boldsymbol{u}_\ell)\}_{\ell\in\mathbb{N}}$ to the truncated system in appropriate norms and obtain a convergent subsequence of $\{(c_\ell, \boldsymbol{u}_\ell)\}_{\ell\in\mathbb{N}}$ whose limit will be the required weak solutions for the original system of IBVP \eqref{model1}-\eqref{assumption:beta-k}. 
Now, for a fixed $\ell$, using the Galerkin approximation method, we prove the existence of weak solution $(c_\ell, \boldsymbol{u}_\ell)$ to the truncated system. As  $H^1(\Omega)$ and ${\bf V}$ are separable Hilbert space, therefore we assume   $\{w_i \}_{i=1}^{\infty}$ and  $\{\boldsymbol{z}_i\}_{i=1}^{\infty}$ are orthogonal bases for $H^1(\Omega)$ and  $\boldsymbol{V}$ respectively. For $n\in\mathbb{N}$, Define  $W_n := \text{span}\{w_1,w_2,\cdots,w_n\}$ and $\boldsymbol{V}_n := \text{span}\{\boldsymbol{z}_1, \boldsymbol{z}_2,\cdots,\boldsymbol{z}_n \}$. We look for the solutions $\boldsymbol{u}_{\ell, n}:[0,T] \rightarrow \boldsymbol{V}_n$, $c_{\ell, n}:[0,T] \rightarrow W_n$, of the form 
\begin{align}\label{finite dim solution}
c_{\ell, n}(t,\boldsymbol{x}) = \sum_{j=1}^{n} \alpha_{j}(t) w_{j} ~,~ \boldsymbol{u}_{\ell, n}(t,\boldsymbol{x}) = \sum_{j=1}^{n} \beta_{j}(t)\boldsymbol{z}_j 
\end{align}
satisfying the equations 
\begin{align}\label{finite weak 1}
 \innerproduct{\dfrac{\partial \boldsymbol{u}_{\ell, n}(t)}{\partial t}, \boldsymbol{z}_k}  + \left( \frac{\widetilde{\mu}_{\ell}({c_{n, \ell}(t)})}{K}\boldsymbol{u}_{\ell, n}(t),\boldsymbol{z}_k\right) + \left(\beta |\boldsymbol{u}_{\ell, n}(t)| \boldsymbol{u}_{\ell, n}(t),  \boldsymbol{z}_k\right)+\mu_{e} \br{\boldsymbol{\nabla}\boldsymbol{u}_{\ell, n}(t)}{\boldsymbol{\nabla}\boldsymbol{z}_k} - \br{\boldsymbol{f}(t)}{\boldsymbol{z}_k}=0,    
\end{align}
\begin{align}\label{finite weak 2}
 \innerproduct{\dfrac{\partial c_{n, \ell}(t)}{\partial t},w_{k}} + \br{\boldsymbol{u}_{\ell, n}(t)\cdot \boldsymbol{\nabla}c_{\ell, n}(t)}{w_{k}} + D \br{\boldsymbol{\nabla}c_{\ell, n}(t)} {\boldsymbol{\nabla} w_{j}}  -  \br{\kappa\,c_{n, \ell}(t) (c_{\ell, n}(t) -1)}{w_{k}} =0,    
\end{align}
for $k = 1,2,3,\cdots,n$ and for almost every $t \in (0,T)$. Substituting the expressions of $c_{\ell, n}$ and  ${\bf u}_{n,\ell}$ from (\ref{finite dim solution}) into \eqref{finite weak 1} and \eqref{finite weak 2} respectively.  Finally using the  orthogonality of the bases functions, the equations \eqref{finite weak 1} and \eqref{finite weak 2} reduce to a coupled system of  nonlinear ordinary differential equations (ODEs) for the time-dependent coefficients $\alpha_k(t)$ and $\beta_k(t)$, for $k = 1,2,\cdots, n$,    are given by
\begin{align}
\frac{\mathrm{d} t \beta_k(t)}{\mathrm{d} t} &= \frac{1}{\|\boldsymbol{z}_k\|_{L^2}^2} \Bigg[ \br{\boldsymbol{f}(t)}{\boldsymbol{z}_k} - \mu_e \nr{\nabla \boldsymbol{z}_k} \beta_k(t) - \sum_{j=1}^{n} \Big( A_{kj}(\boldsymbol{\alpha}) + N_{kj}(\boldsymbol{\beta}) \Big) \beta_j(t) \Bigg], \label{eq:ode_velocity} \\ \frac{\mathrm{d} t \alpha_k(t)}{\mathrm{d} t} &= \frac{1}{\|w_k\|_{L^2}^2} \Bigg[ -\kappa \|w_k\|_{L^2}^2 \alpha_k(t) - D \|\boldsymbol{\nabla} w_k\|_{L^2}^2 \alpha_k(t) + Q_k(\boldsymbol{\alpha}) - \sum_{j=1}^{n}  C_{kj}(\boldsymbol{\beta}) \alpha_j(t) \Bigg]\label{eq:ode_conc}
\end{align}
where  $\boldsymbol{\alpha}:=(\alpha_1,\alpha_2,\cdots, \alpha_n)^{T}$, $\boldsymbol{\beta}:=(\beta_1,\beta_2,\cdots,\beta_n)^{T}$  are column vectors of coefficients, $Q_k(\boldsymbol{\alpha}) = \kappa \left( c_n^2, w_k \right)$ and the matrices $A_{kj},N_{kj},c_{kj}$  The matrices are given by 
\begin{align}A_{kj}(\boldsymbol{\alpha}) = \left( \frac{\widetilde{\mu}_{\ell}(c_n)}{K} \boldsymbol{z}_j, \boldsymbol{z}_k \right),   N_{kj}(\boldsymbol{\beta}) = \left( \beta |\boldsymbol{u}_n| \boldsymbol{z}_j, \boldsymbol{z}_k \right),  C_{kj}(\boldsymbol{\beta}) = \left( (\boldsymbol{u}_n \cdot \nabla) w_j, w_k \right). \end{align}
  The above system of ODEs is closed by specifying the following initial data $\alpha_k(0) = (c_0, w_k)$ and $\beta_k(0)= (\boldsymbol{u}_0, \boldsymbol{z}_k)$, for $k = 1,\cdots,n$. Since the nonlinear terms in the system (\ref{eq:ode_velocity})--(\ref{eq:ode_conc}) are locally Lipschitz continuous, the Picard–Lindelöf theorem guarantees the existence of a local solution. This completes the proof of the existence of the solution ${\bf u}_{n,\ell}$ and $c_{\ell, n}$ for each $n\in\mathbb{N}$. Next, we prove the necessary a priori bounds on solutions $\boldsymbol{u}_{\ell, n}$ and  $c_{\ell, n}$ for each fixed $\ell\in \mathbb{N}$. 
 
Since we are interested in proving the existence of solution for each fixed $\ell\in\mathbb{N}$ therefore for notational convenience in the subsequent analysis unless specified, we drop the $\ell$ subscript and write $c_n$ and $\boldsymbol{u}_n$ for $c_{n, \ell}$ and $\boldsymbol{u}_{\ell, n}$, respectively.

\begin{lem}\label{lemma1}
There exists a $T_{\max}\in (0,T)$ such that the  sequences $\{c_n\}_{n\in\mathbb{N}}$ and $\{{\bf u}_n\}_{n\in\mathbb{N}}$ are uniformly bounded in  $L^{\infty}(0,T_{{\max}};L^2(\Omega)) \cap L^{2}(0,T_{{\max}};H^1(\Omega))$ and $L^{\infty}(0,T_{{\max}};{\bf S})\cap L^{2}(0,T_{{\max}};\boldsymbol{V})$, respectively  i.e. there exist constants $M_1,M_2>0$ independent of $n$ such that \begin{align}\label{Bound on cn}
    \esssup_{t\in (0,T_{\max})}\nrm{c_{n}(t)}+\left(\int_{0}^{T_{\max}}\lVert c_n(t)\rVert_{H^1}^2\mathrm{d} t\right)^{1/2}\leq M_1, \ \mbox{for all}\ n\in\mathbb{N} 
\end{align}and \begin{align}\label{Bound on un} \esssup_{t\in (0,T_{\max})}\lVert{\bf u}_{n}(t)\rVert_{L^2}+\left(\int_{0}^{T_{\max}}\lVert {\bf u}_n(t)\rVert_{H^1}^2\mathrm{d} t\right)^{1/2}\leq M_2, \ \mbox{for all}\ n\in\mathbb{N}.\end{align}  
\end{lem}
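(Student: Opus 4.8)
The plan is to run the standard energy method on the Galerkin system \eqref{finite weak 1}--\eqref{finite weak 2}, testing each equation with its own solution and exploiting the structural sign conditions; the sole genuine difficulty is the cubic reaction term, which dictates the \emph{local} time horizon $T_{\max}$ via a comparison argument.

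\textbf{Velocity estimate.} First I would choose $\boldsymbol z_k=\boldsymbol u_n(t)$ in \eqref{finite weak 1} (multiply the $k$-th identity by $\beta_k(t)$ and sum over $k$), obtaining
\[
\tfrac12\tfrac{\mathrm d}{\mathrm d t}\nr{\boldsymbol u_n}+\tfrac1K\!\int_\Omega\widetilde\mu_\ell(c_n)|\boldsymbol u_n|^2\,\mathrm d x+\int_\Omega\beta|\boldsymbol u_n|^3\,\mathrm d x+\mu_e\nr{\nabla\boldsymbol u_n}=\br{\boldsymbol f}{\boldsymbol u_n}.
\]
The second and third integrals are non-negative because $\widetilde\mu_\ell\ge0$ by \eqref{eq:trunc_viscosity} and $\beta\ge0$ by \eqref{assumption:beta-k}, so they may be dropped. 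Estimating $\br{\boldsymbol f}{\boldsymbol u_n}$ with the Poincaré inequality (available since $\boldsymbol u_n\in\boldsymbol V\subset(H^1_0(\Omega))^d$) and Young's inequality (Theorem~\ref{youngs inequaity}) with $\epsilon=\mu_e/2$ absorbs half the gradient term and leaves $\tfrac{\mathrm d}{\mathrm d t}\nr{\boldsymbol u_n}+\mu_e\nr{\nabla\boldsymbol u_n}\le M\|\boldsymbol f(t)\|_{\boldsymbol V^\ast}^2$. Integrating in time yields \eqref{Bound on un}, with a constant depending only on $\nrm{\boldsymbol u_0}$ and $\boldsymbol f$ — in particular independent of $n$ (and of $\ell$, since the viscosity term was discarded). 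Note this estimate is in fact valid on all of $[0,T]$; the restriction to $T_{\max}$ comes solely from the concentration.

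\textbf{Concentration estimate and the cubic obstacle.} Next I would take $w_k=c_n(t)$ in \eqref{finite weak 2}, giving
\[
\tfrac12\tfrac{\mathrm d}{\mathrm d t}\nr{c_n}+D\nr{\nabla c_n}+\br{\boldsymbol u_n\cdot\nabla c_n}{c_n}=\br{\kappa\,c_n(c_n-1)}{c_n}.
\]
The convective term vanishes: since $\nabla\cdot\boldsymbol u_n=0$ and $\boldsymbol u_n=\boldsymbol 0$ on $\partial\Omega$, integration by parts gives $\br{\boldsymbol u_n\cdot\nabla c_n}{c_n}=\tfrac12\int_\Omega\boldsymbol u_n\cdot\nabla(c_n^2)\,\mathrm d x=0$. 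The reaction term equals $\int_\Omega\kappa\,c_n^3\,\mathrm d x-\int_\Omega\kappa\,c_n^2\,\mathrm d x$; the second integral is $\le0$ and is dropped, while the first is the crux, since at the Galerkin level $c_n$ is not yet known to be sign-definite (the maximum principle $0\le c_n\le1$ is unavailable here, being proved only later for the full weak solution). I would bound it by Hölder and then interpolation as $\int_\Omega\kappa\,c_n^3\le\kappa_2\nrm{c_n}\|c_n\|_{L^4}^2$, apply Gagliardo--Nirenberg (Theorem~\ref{gagliardo}), and finally use Young's inequality (Theorem~\ref{youngs inequaity}) to absorb the resulting $\nr{\nabla c_n}$ into the diffusion term, arriving at
\[
\tfrac{\mathrm d}{\mathrm d t}\nr{c_n}+D\nr{\nabla c_n}\le M\,(\nr{c_n})^{p}+M\,(\nr{c_n})^{q},
\]
with leading exponent $p=2$ (and $q=3/2$) when $d=2$, and $p=3$ (and $q=3/2$) when $d=3$.

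\textbf{Comparison and localization.} The obstacle is precisely that the leading exponent exceeds $1$: the cubic reaction forbids a global-in-time bound and leaves only a local one. Writing $y_1(t):=\nr{c_n(t)}\ge0$, discarding the non-negative dissipation, and comparing with the solution $y_2$ of the scalar ODE $y_2'=M y_2^{p}+M y_2^{q}$, $y_2(0)=\nr{c_n(0)}\le\nr{c_0}$, the right-hand side is continuous and nondecreasing for $y\ge0$, so the comparison principle (Theorem~\ref{Comparison principle}) yields $y_1(t)\le y_2(t)$ for as long as $y_2$ exists. Since $y_2$ solves a superlinear autonomous ODE it blows up at some finite time $T^\ast=T^\ast(\nrm{c_0},M)>0$ independent of $n$; fixing $T_{\max}:=\min\{T,T^\ast/2\}\in(0,T)$, the function $y_2$ — and hence $\nrm{c_n}$ — is uniformly bounded on $[0,T_{\max}]$, which is the $L^\infty(0,T_{\max};L^2)$ part of \eqref{Bound on cn}. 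Integrating the concentration energy identity over $(0,T_{\max})$ and using this uniform $L^\infty L^2$ bound then controls $\int_0^{T_{\max}}\nr{\nabla c_n}\,\mathrm d t$, yielding the $L^2(0,T_{\max};H^1)$ bound and completing \eqref{Bound on cn}. All constants depend only on the data and on $T_{\max}$, hence are independent of $n$, as claimed. The main obstacle throughout is this cubic reaction term, whose supercritical growth both forces the Gagliardo--Nirenberg/Young absorption and confines the estimate to the finite horizon $T_{\max}$.
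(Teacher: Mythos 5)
Your proposal is correct and follows essentially the same route as the paper: test the Galerkin equations with $c_n$ and $\boldsymbol{u}_n$, kill the convective term via incompressibility, control the cubic reaction term by Cauchy--Schwarz, Gagliardo--Nirenberg and Young's inequality, and obtain an $n$-independent $T_{\max}$ by comparing $\nr{c_n}$ with a superlinear scalar ODE (Theorem~\ref{Comparison principle}), then integrate the energy identity for the $L^2(0,T_{\max};H^1)$ bounds. The only deviations are cosmetic: your Young splitting produces exponents $(p,q)=(2,3/2)$ or $(3,3/2)$ instead of the paper's $y+y^{(6-d)/(4-d)}$, and for the velocity you absorb the forcing via Poincaré rather than Gronwall, which in fact makes explicit (as you note) that the velocity bound holds on all of $[0,T]$ and the localization to $T_{\max}$ is forced by the concentration alone.
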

\begin{proof}
We first prove the uniform boundedness of the sequence $\{c_n\}_{n\in\mathbb{N}}$. We multiply 
 equation \eqref{finite weak 2}, by $\alpha_k(t)$ and summing from  $k=1$ to $n$ and using equation \eqref{finite dim solution}, to  get  
\begin{align*}
 \innerproduct{\dfrac{\partial c_{n}(t)}{\partial t},c_{n}(t)} + \br{\boldsymbol{u}_n(t)\cdot \boldsymbol{\nabla} c_{n}(t)}{c_{n}(t)} + D \br{\boldsymbol{\nabla} c_{n}(t)} {\boldsymbol{\nabla} c_{n}(t)}  -  \br{\kappa\,c_n(t) (c_n(t) -1)}{c_{n}(t)} =0.    
\end{align*}
Note that the second term in the above equation vanishes because of the divergence free condition on velocity given by equation \eqref{model1} and the boundary condition on concentration given by equation \eqref{boundary conditions}. Therefore, the above equation reduces to  
\begin{align*}
    \frac{1}{2}\frac{\mathrm{d}}{\mathrm{d}t}\nr{c_n(t)} + D \nr{\boldsymbol{\nabla}c_n(t)} =  \int_{\Omega} \kappa\,(c_n(t))^3 -  \int_{\Omega}\kappa\,(c_n(t))^2.
\end{align*}
Applying Cauchy–Schwarz inequality to the first term on the right-hand side, and noting that $\kappa\, (c_n)^2 \geq 0$, we have
\begin{align*}
    \frac{1}{2}\frac{\mathrm{d}}{\mathrm{d}t}\nr{c_n(t)} + D \nr{\boldsymbol{\nabla}c_n(t)} \leq \kappa_2 \|c_n(t)\|_{L^4}^2 \nrm{c_n(t)}. 
\end{align*}
By using the  Gagliardo-Nirenberg inequality (see Theorem \ref{gagliardo}), we obtain
\begin{align*}
    \frac{1}{2}\frac{\mathrm{d}}{\mathrm{d}t}\nr{c_n(t)} + D \nr{\boldsymbol{\nabla}c_n(t)} \leq  \kappa_2 M   \|c_n(t) \|_{L^2}^{2(1-d/4)} \|c_n(t) \|_{H^1}^{d/2}\nrm{c_n(t)}, 
\end{align*}
where $d \in \{2,3 \}$. Now applying the Young's inequality (see theorem \ref{youngs inequaity}), we get 
\begin{align*}
    \frac{1}{2}\frac{\mathrm{d}}{\mathrm{d}t}\nr{c_n(t)} + D  \nr{\boldsymbol{\nabla}c_n(t)} \leq \epsilon \|c_n(t)\|_{H^1}^2 + \kappa_{2}^{4/(4-d)} \widetilde{M}_{\epsilon}^{4/(4-d)} \nrm{c_n(t)}^{2(6-d)/(4-d)}. 
\end{align*}
Now if  $\displaystyle M_{\epsilon} := \max\left\{\epsilon,\kappa_{2}^{4/(4-d)} \widetilde{M}_{\epsilon}^{4/(4-d)} \right\}$ then the above inequality becomes
\begin{align}\label{eq:concentration bound}
    \frac{1}{2}\frac{\mathrm{d}}{\mathrm{d}t}\nr{c_n(t)} + (D - \epsilon) \nr{\boldsymbol{\nabla}c_n(t)} \leq M_{\epsilon} \left( \nr{c_n(t)} +  \nrm{c_n(t)}^{2(6-d)/(4-d)} \right).
\end{align}
Now, by choosing $0<\epsilon < D/2$, and setting $ y_n(t) = \nr{c_n(t)}  $, the above inequality can be rewritten as 
\begin{align*}
    \frac{\mathrm{d} y_n(t) }{\mathrm{d} t}  \leq M_{\epsilon} \Big( y_n(t) + y_n^{(6-d)/(4-d)}(t)\Big), \text{ with } y_n(0) = \nr{c_n(0)}.
\end{align*}
By the comparison principle (see theorem \ref{Comparison principle}), $y_n(t)$ is bounded by the solution $z(t)$ of the corresponding ordinary differential equation
$$\frac{\mathrm{d}  z(t)}{\mathrm{d} t} = M_{\epsilon} ( z(t) + z^{(6-d)/(4-d)}(t) ), \text{ with } z(0) = \nr{c_{0}}.$$
Now using the Picard-Lindelöf theorem for ODEs, there exists a maximum time denoted by $T_{\max}\in (0, T)$ such that the above ODE has a unique solution $z(t)$ which exists on $[0, T_{\max})$, where $T_{\max}$ depends only on $M_\epsilon$ and $c_0$. Crucially, since $M_\epsilon$ and $c_0$ are independent of $n$, the local existence time $T_{\max}$ is also independent of $n$. Finally, using the comparison principle (see theorem \ref{Comparison principle}), we get 
\begin{align*}y_n(t) \leq z(t) \leq \mathcal{M}, \quad \text{for all } t \in [0, T_{\max}) \end{align*}
where the constant $\mathcal{M}$ is independent of $n$.
The bound $y_n(t) \le \mathcal{M}$ gives $\{c_n\}_{n\in\mathbb{N}}$ is uniformly bounded in $L^{\infty}(0,T_{\max};L^2(\Omega))$. Now  Integrating the equality \eqref{eq:concentration bound} in time from $0$ to $T_{\max}$, we have 
\begin{align*}
    2(D-\epsilon) \int_{0}^{T_{\max}} \nr{\boldsymbol{\nabla}c_n(t)} \leq  \nr{c_n(0)} - \nr{c_n(T_{\max})} + 2 M_{\epsilon}\int_{0}^{T_{\max}}  \left( \nr{c_n(t)} +  \nrm{c_n(t)}^{2(6-d)/(4-d)} \right).
\end{align*}
Now again by choosing $0<\epsilon < D/2$, and using $\{c_n\}_{n\in\mathbb{N}}$ is uniformly bounded in $L^{\infty}(0,T_{\max};L^2(\Omega))$, we conclude that $\{c_n\}_{n\in\mathbb{N}}$ is uniformly bounded in $L^{2}(0,T_{\max};H^1(\Omega))$. Thus uniform bound on $c_n$ given in inequality  \eqref{Bound on cn} is established.

Next we prove the uniform  boundedness of $\{{\bf u}_n\}_{n\in\mathbb{N}}$. We  multiply equation \eqref{finite weak 1}, by $\beta_k$, then summing form $k=1,2,...,n$, to get 
 \begin{align*}
  \innerproduct{\dfrac{\partial \boldsymbol{u}_{n}(t)}{\partial t}, \boldsymbol{u}_n(t)} + \left( \frac{\widetilde{\mu}_{\ell}(c_n(t))}{K}\boldsymbol{u}_n(t),\boldsymbol{u}_n(t)\right) + \left(\beta |\boldsymbol{u}_n(t)| \boldsymbol{u}_n(t),  \boldsymbol{u}_n(t)\right)+\mu_{e} \br{\boldsymbol{\nabla}\boldsymbol{u}_n(t)}{\boldsymbol{\nabla}\boldsymbol{u}_n(t)} - \br{\boldsymbol{f}(t)}{\boldsymbol{u}_n(t)}=0.     \end{align*}
Using the lower bounds for $\widetilde{\mu}_{\ell}$ and $\beta$ from \eqref{eq:trunc_viscosity} and \eqref{assumption:beta-k}, along with the  Cauchy–Schwarzinequality, we arrive at 
 \begin{align}\label{velocity bounds}
  \frac{1}{2}\frac{\mathrm{d}}{\mathrm{d} t} \nr{\boldsymbol{u}_n(t)} + \beta_1 \|\boldsymbol{u}_n(t)\|_{L^3}^{3} + \mu_e \nr{\boldsymbol{\nabla}\boldsymbol{u}_n(t)} \leq \nrm{\boldsymbol{f}(t)}\nrm{\boldsymbol{u}_n(t)}.
 \end{align}
Ignoring the non-negative terms from the left-hand side and applying  AM-GM inequality in the right side of the above inequality, we get  
 \begin{align*}
 \frac{1}{2} \frac{\mathrm{d}}{\mathrm{d} t} \nr{\boldsymbol{u}_n(t)}  \leq \frac{1}{2}\|\boldsymbol{f}(t)\|_{L^{2}}^2 + \frac{1}{2} \|\boldsymbol{u}_n(t)\|_{L^{2}}^2.
 \end{align*}
By Gronwall’s inequality, we obtain
\begin{align*}
    \nr{\boldsymbol{u}_n(t)}  \leq e^{T_{\max}}\Big( \nr{\boldsymbol{u}_n(0)} + \int_{0}^{T_{\max}} |\boldsymbol{f}(t)\|_{L^{2}}^2 \Big) .
\end{align*}
 Now using the fact that $\boldsymbol{f} \in L^{2}(0,T_{\max};L^2(\Omega))$, we get that $\{\boldsymbol{u}_n\}_{n\in\mathbb{N}}$ is uniformly bounded in $L^{\infty}(0,T_{\max};\boldsymbol{S})$. Finally integrating the inequality \ref{velocity bounds} from $0$ to $T_{\max}$ and using the uniform $L^\infty(0,T_{\max}; \boldsymbol{S})$ bound of $\{\boldsymbol{u}_n\}_{n\in\mathbb{N}}$, we find   that $\{\boldsymbol{u}_n\}_{n\in\mathbb{N}}$ is also uniformly bounded in $L^2(0,T_{\max};\boldsymbol{V})$. Hence, inequality \eqref{Bound on un} is derived. This completes the proof of lemma. 
\end{proof}
Next we prove the uniform bound on time-derivative of $\{c_n\}_{n\in\mathbb{n}}$ and ${\bf u}_{n\in\mathbb{N}}$ in some Sobolev norms. More precisely, we prove the following. 
\begin{lem}\label{lemma2}
The sequences $\big\{\frac{\partial c_n}{\partial t}\big\}_{n\in\mathbb{N}}$ and $\big\{\frac{\partial \boldsymbol{u}_n}{\partial t}\big\}_{n\in\mathbb{N}}$ are uniformly bounded in the space $L^{2}(0,T_{\max};(H^1(\Omega))^{\ast})$ and $L^{2}(0,T_{\max};\boldsymbol{V}^{\ast})$ respectively.
\end{lem}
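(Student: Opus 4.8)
The plan is to estimate the dual norms $\|\partial_t \boldsymbol{u}_n\|_{\boldsymbol{V}^\ast}$ and $\|\partial_t c_n\|_{(H^1(\Omega))^\ast}$ pointwise in time by testing the Galerkin identities \eqref{finite weak 1}--\eqref{finite weak 2} against projections of arbitrary unit test functions, and then integrating the resulting bounds in time using the a priori estimates of Lemma \ref{lemma1}. Let $P_n$ denote the orthogonal projection onto $\boldsymbol{V}_n$ (respectively $W_n$); it is stable, $\|P_n\boldsymbol{v}\|_{\boldsymbol{V}}\le\|\boldsymbol{v}\|_{\boldsymbol{V}}$, and since $\partial_t\boldsymbol{u}_n(t)\in\boldsymbol{V}_n$ one has $\innerproduct{\partial_t\boldsymbol{u}_n(t),\boldsymbol{v}} = \innerproduct{\partial_t\boldsymbol{u}_n(t),P_n\boldsymbol{v}}$ for every $\boldsymbol{v}\in\boldsymbol{V}$ (this is the point where one uses that the Galerkin basis may be taken orthogonal in $L^2$ as well, so that the excluded part is $L^2$-orthogonal to $\boldsymbol{V}_n$). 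The analogous facts hold for $W_n$ and $\phi\in H^1(\Omega)$. Hence it suffices to bound the right-hand sides of \eqref{finite weak 1}--\eqref{finite weak 2}, with $\boldsymbol{z}_k$ (resp. $w_k$) replaced by a projected unit-norm test function, and to show these bounds are square-integrable on $(0,T_{\max})$ with constants independent of $n$ and $\ell$.

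For the velocity, fix $\boldsymbol{v}\in\boldsymbol{V}$ with $\|\boldsymbol{v}\|_{\boldsymbol{V}}\le 1$ and set $\boldsymbol{v}_n = P_n\boldsymbol{v}$. Rearranging \eqref{finite weak 1},
\begin{align*}
\innerproduct{\partial_t\boldsymbol{u}_n,\boldsymbol{v}} = \br{\boldsymbol{f}}{\boldsymbol{v}_n} - \br{\tfrac{\widetilde{\mu}_\ell(c_n)}{K}\boldsymbol{u}_n}{\boldsymbol{v}_n} - \br{\beta|\boldsymbol{u}_n|\boldsymbol{u}_n}{\boldsymbol{v}_n} - \mu_e\br{\boldsymbol{\nabla}\boldsymbol{u}_n}{\boldsymbol{\nabla}\boldsymbol{v}_n}.
\end{align*}
The forcing term is dominated by $\nrm{\boldsymbol{f}}\in L^2(0,T_{\max})$; the viscous drag by $\tfrac{\mu_\ell}{K}\nrm{\boldsymbol{u}_n}$ using the truncation bound \eqref{eq:trunc_viscosity}, which lies in $L^\infty(0,T_{\max})\subset L^2(0,T_{\max})$ by \eqref{Bound on un}; and the Brinkman term by $\mu_e\nrm{\boldsymbol{\nabla}\boldsymbol{u}_n}\in L^2(0,T_{\max})$, again by \eqref{Bound on un}. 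The Forchheimer term is the genuinely nonlinear contribution: by \eqref{assumption:beta-k} and Hölder it is controlled by $\beta_2\|\boldsymbol{u}_n\|_{L^4}^2\nrm{\boldsymbol{v}_n}\le\beta_2\|\boldsymbol{u}_n\|_{L^4}^2$, and one then uses Gagliardo--Nirenberg (Theorem \ref{gagliardo}) together with \eqref{Bound on un} to extract space-time integrability of $\|\boldsymbol{u}_n\|_{L^4}^2$.

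For the concentration, fix $\phi\in H^1(\Omega)$ with $\|\phi\|_{H^1}\le 1$ and let $\phi_n = P_n\phi$. Using the integrated-by-parts form of the convection term, the diffusion contribution is bounded by $D\nrm{\boldsymbol{\nabla}c_n}\in L^2(0,T_{\max})$ via \eqref{Bound on cn}. The quadratic reaction term $\br{\kappa c_n(c_n-1)}{\phi_n}$ is handled by Hölder and Gagliardo--Nirenberg, estimating $\|c_n^2\|_{L^{6/5}}$ against $\|\phi_n\|_{L^6}\lesssim\|\phi_n\|_{H^1}$ and interpolating $\|c_n\|_{L^{12/5}}$ between $L^2$ and $L^6$; combined with $\{c_n\}$ bounded in $L^\infty(0,T_{\max};L^2)\cap L^2(0,T_{\max};H^1)$ from \eqref{Bound on cn}, this is square-integrable in time. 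The convective term is the decisive one: $|\br{c_n\boldsymbol{u}_n}{\boldsymbol{\nabla}\phi_n}|\le\|c_n\|_{L^4}\|\boldsymbol{u}_n\|_{L^4}$, whose space-time integrability must again be read off from Theorem \ref{gagliardo} and \eqref{Bound on cn}--\eqref{Bound on un} via Hölder's inequality in time.

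The main obstacle is precisely the control of the two nonlinear terms, the Forchheimer drag and the convective transport, whose space-time integrability is not immediate from the energy bounds alone; all the remaining terms are linear or lower order and fall squarely inside $L^2(0,T_{\max})$. The mechanism is to trade spatial integrability through Gagliardo--Nirenberg (Theorem \ref{gagliardo}) against the $L^\infty(0,T_{\max};L^2)\cap L^2(0,T_{\max};H^1)$ bounds of Lemma \ref{lemma1}, and then apply Hölder in time. In two dimensions the relevant exponents close comfortably, giving $\|c_n\|_{L^4}\|\boldsymbol{u}_n\|_{L^4}$ and $\|\boldsymbol{u}_n\|_{L^4}^2$ directly in $L^2(0,T_{\max})$. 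The three-dimensional case is the delicate one, since the Gagliardo--Nirenberg exponents are larger; here one must use the sharp interpolation and, if needed, the additional $L^3$-in-space control of $\boldsymbol{u}_n$ furnished by the Forchheimer dissipation appearing on the left of \eqref{velocity bounds}, to recover the square-integrability in time. Once these estimates are secured, squaring and integrating over $(0,T_{\max})$ yields constants independent of $n$, establishing the uniform bounds in $L^2(0,T_{\max};\boldsymbol{V}^\ast)$ and $L^2(0,T_{\max};(H^1(\Omega))^\ast)$.
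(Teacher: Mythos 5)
Your proposal follows the same route as the paper's proof: project an arbitrary unit test function onto $W_n$ (resp.\ $\boldsymbol{V}_n$), use the orthogonality of $\partial_t c_n$ (resp.\ $\partial_t \boldsymbol{u}_n$) to the complement to replace $\phi$ by $\phi_n$, estimate each term of \eqref{finite weak 1}--\eqref{finite weak 2} by Cauchy--Schwarz/H\"older, take the supremum over unit-norm test functions, and integrate in time against the bounds of Lemma \ref{lemma1}. In $d=2$ everything you say closes: Theorem \ref{gagliardo} gives $\|\phi\|_{L^4}\le M\nrm{\phi}^{1/2}\|\phi\|_{H^1}^{1/2}$, so $\|\boldsymbol{u}_n(t)\|_{L^4}^2$ and $\|c_n(t)\|_{L^4}\|\boldsymbol{u}_n(t)\|_{L^4}$ are dominated by products of $\esssup_t \nrm{\cdot}$ factors and $H^1$ factors that lie in $L^2(0,T_{\max})$ uniformly in $n$.

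The genuine gap is the $d=3$ case, which you rightly flag but do not close --- and your proposed rescue fails. First, assumption \eqref{assumption:beta-k} only gives $\beta \ge 0$, so no positive lower bound $\beta_1$ is available and the Forchheimer dissipation term $\beta_1\|\boldsymbol{u}_n\|_{L^3}^3$ appearing in \eqref{velocity bounds} may be vacuous. Second, even granting $\boldsymbol{u}_n \in L^3(0,T_{\max};L^3(\Omega))$, the interpolation $\|\boldsymbol{u}\|_{L^4}\le \|\boldsymbol{u}\|_{L^3}^{1/2}\|\boldsymbol{u}\|_{L^6}^{1/2}$ leads to $\|\boldsymbol{u}\|_{L^4}^4 \le \|\boldsymbol{u}\|_{L^3}^2\|\boldsymbol{u}\|_{L^6}^2$, whose time integral is not controlled by the available bounds; from Lemma \ref{lemma1} alone one gets in 3D only $\|\boldsymbol{u}_n\|_{L^4}^2,\ \|c_n\|_{L^4}\|\boldsymbol{u}_n\|_{L^4} \in L^{4/3}(0,T_{\max})$ (since $\|\phi\|_{L^4}^2 \le M\nrm{\phi}^{1/2}\|\phi\|_{H^1}^{3/2}$), exactly the familiar $L^{4/3}$ barrier from 3D Navier--Stokes. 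Your $L^{6/5}$--$L^6$ duality pairing is actually a genuine sharpening over the paper's crude $L^2$--$L^2$ pairing: it repairs the reaction term in 3D, and the same trick would repair the Forchheimer term, since $\||\boldsymbol{u}_n|\boldsymbol{u}_n\|_{L^{6/5}} = \|\boldsymbol{u}_n\|_{L^{12/5}}^2 \le M\nrm{\boldsymbol{u}_n}^{3/2}\|\boldsymbol{u}_n\|_{H^1}^{1/2} \in L^2(0,T_{\max})$ uniformly. But it cannot help with convection, because $\boldsymbol{\nabla}\phi$ enjoys no integrability beyond $L^2$; so in 3D the convective term caps $\partial_t c_n$ at $L^{4/3}(0,T_{\max};(H^1(\Omega))^{\ast})$. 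To be fair, the paper's own proof has the identical defect: it simply asserts that all right-hand terms are uniformly bounded in $L^2(0,T_{\max})$, which its displayed estimates justify only for $d=2$. The honest version of the lemma is $L^2$ in 2D and $L^{4/3}$ in 3D; since the Aubin--Lions theorem (Theorem \ref{aubin-lions}) requires only $q>1$, the weaker exponent still suffices for the compactness used in Lemma \ref{lemma3}, so the downstream argument survives either way.
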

\begin{proof} Using the projection theorem for Hilbert spaces,  any $\phi \in H^1(\Omega)$,  can be written as $\phi = \phi_n + \varphi_n$, where $\phi_n:= P_n \phi$ is the $H^1(\Omega)$-orthogonal projection of $\phi$ onto $W_n$ and $\varphi_n \in W_n^\perp$. Since $\partial_t c_n \in W_n$ by construction, it is orthogonal to $\varphi_n \in W_n^\perp$ therefore  $\innerproduct{\partial_t c_n, \varphi_n} = 0$. Now using $\innerproduct{\partial_t c_n, \varphi_n} = 0$ and triangle inequality in equation \eqref{finite weak 2}, we have 
\begin{align*}
 \left| \innerproduct{\frac{\partial c_n(t)}{\partial t}, \phi} \right| = \left| \innerproduct{\frac{\partial c_n(t)}{\partial t}, \phi_n} \right| \leq     |\br{c_n(t)\boldsymbol{u}_n(t)}{\boldsymbol{\nabla}\phi_n}| + D |\br{\boldsymbol{\nabla}c_n(t)}{\boldsymbol{\nabla}\phi_n}| + |\br{\kappa\, c_n(t)(c_n(t)-1)}{\phi_n} | .
\end{align*}
After using the  Cauchy–Schwarzinequality, the above inequality reduces to  
\begin{align*}
 \left| \innerproduct{\frac{\partial c_n(t)}{\partial t}, \phi} \right| \leq \|c_n(t)\|_{L^{4}}   \|\boldsymbol{u}_n(t) \|_{L^4} \nrm{\boldsymbol{\nabla}\phi_n} + D \nrm{\boldsymbol{\nabla}c_n(t)} \nrm{\boldsymbol{\nabla}\phi_n} + \kappa_2 \nrm{c_n(t)(c_n(t)-1)}\nrm{\phi_n}.
\end{align*}
Now using $\|\phi_n\|_{L^2} \leq \|\phi\|_{H^1}$ and $\|\boldsymbol{\nabla}\phi_n\|_{L^2} \leq \|\phi\|_{H^1}$, in the above inequality, we have
\begin{align*}
 \left| \innerproduct{\frac{\partial c_n(t)}{\partial t}, \phi} \right| \leq \|c_n(t)\|_{L^{4}}   \|\boldsymbol{u}_n(t) \|_{L^4} \|\phi\|_{H^1}  + D \nrm{\boldsymbol{\nabla}c_n(t)} \|\phi\|_{H^1} + \kappa_2 \nrm{c_n(t)(c_n(t)-1)}\|\phi\|_{H^1}.
\end{align*}
Taking the supremum over all $\phi \in H^1(\Omega)$ such that $\|\phi\|_{H^1} \leq 1$, we obtain
\begin{align*}
  \left\|\frac{\partial c_n(t)}{\partial t} \right\|_{(H^1(\Omega))^{\ast}} \leq \|c_n(t)\|_{L^{4}}   \|\boldsymbol{u}_n(t) \|_{L^4}  + D \nrm{\boldsymbol{\nabla}c_n(t)} + \kappa_2 \nrm{c_n(t)(c_n(t)-1)}.
\end{align*}
The terms on the right-hand side of the above inequality are uniformly bounded in $L^2(0, T_{\max})$. Therefore, we conclude that $\big\{\frac{\partial c_n}{\partial t}\big\}_{n\in\mathbb{N}}$ is uniformly bounded in $L^2(0,T_{\max};(H^1(\Omega))^{\ast})$. Utilizing the orthogonal decomposition $\boldsymbol{V} = \boldsymbol{V}_n \oplus \boldsymbol{V}_n^\perp$, any $\boldsymbol{v} \in \boldsymbol{V}$ can be uniquely expressed as $\boldsymbol{v} = \boldsymbol{v}_n + \boldsymbol{v}_n^\perp$. Since $\frac{\partial \boldsymbol{u}_n}{\partial t} \in \boldsymbol{V}_n$, it follows that $\left\langle \frac{\partial \boldsymbol{u}_n}{\partial t}, \boldsymbol{v}_n^\perp \right\rangle = 0$. Applying this orthogonality property and the triangle inequality to \eqref{finite weak 2}, we obtain
\begin{align*}
    \left| \innerproduct{\frac{\partial \boldsymbol{u}_n(t)}{\partial t}, \boldsymbol{v}} \right| = \left| \innerproduct{\frac{\partial \boldsymbol{u}_n(t)}{\partial t}, \boldsymbol{v}_n} \right| \leq \left|\left(\frac{\widetilde{\mu}(c_n)}{K}\boldsymbol{u}_n(t), \boldsymbol{v}_n  \right) \right|+ \left|\left(\beta |\boldsymbol{u}_n(t)|\boldsymbol{u}_n(t), \boldsymbol{v}_n  \right)\right| + \mu_e \left|\left(\boldsymbol{\nabla} \boldsymbol{u}_n(t), \boldsymbol{\nabla} \boldsymbol{v}_n  \right)\right| + \left|(\boldsymbol{f}(t), \boldsymbol{v}_n)\right|.
\end{align*}
By using the Cauchy-Schwarz inequality, and upper bounds for $\widetilde{\mu}(c_n)$ and $\beta$, we obtain 
\begin{align*}
  \left| \innerproduct{\frac{\partial \boldsymbol{u}_n(t)}{\partial t}, \boldsymbol{v}} \right| \leq   \frac{\mu_{\ell}}{K} \nrm{\boldsymbol{u}_n(t)} \nrm{\boldsymbol{v}_n} + \beta_{2} \|\boldsymbol{u}_n(t)\|_{L^4}^{2} \nrm{\boldsymbol{v}_n} + \mu_{e} \nrm{\boldsymbol{\nabla}\boldsymbol{u}_n(t)} \nrm{\boldsymbol{\nabla}\boldsymbol{v}_n} + \nrm{\boldsymbol{f}(t)} \nrm{\boldsymbol{v}_n}.
\end{align*}
After using $\nrm{\boldsymbol{v}_n} \leq \nrm{\boldsymbol{v}}$ and $\nrm{\boldsymbol{\nabla}\boldsymbol{v}_n} \leq \nrm{\boldsymbol{\nabla}\boldsymbol{v}}$, in the above inequality, we get 
\begin{align*}
  \left| \innerproduct{\frac{\partial \boldsymbol{u}_n(t)}{\partial t}, \boldsymbol{v}} \right| \leq   \frac{\mu_{\ell}}{K} \nrm{\boldsymbol{u}_n(t)} \nrm{\boldsymbol{v}} + \beta_{2} \|\boldsymbol{u}_n(t)\|_{L^4}^{2} \nrm{\boldsymbol{v}} + \mu_{e} \nrm{\boldsymbol{\nabla}\boldsymbol{u}_n(t)}\nrm{\boldsymbol{\nabla}\boldsymbol{v}} + \nrm{\boldsymbol{f}(t)} \nrm{\boldsymbol{v}}.
\end{align*}
Taking supremum over all $\boldsymbol{v} \in \boldsymbol{V}$ with $\|\boldsymbol{v}\|_{\boldsymbol{H}^1}\leq 1$ in the preceding inequality, we get
\begin{align*}
\left\| \frac{\partial \boldsymbol{u}_n(t)}{\partial t}\right\|_{\boldsymbol{V}^{\ast}}  \leq  \frac{\mu_{\ell}}{K} \nrm{\boldsymbol{u}_n(t)}  + \beta_{2} \|\boldsymbol{u}_n(t)\|_{L^4}^{2}  + \mu_{e} \nrm{\boldsymbol{\nabla}\boldsymbol{u}_n(t)} + \nrm{\boldsymbol{f}(t)} .
\end{align*}
Since all the terms on the right-hand side of the above inequality are uniformly bounded in $L^2(0, T_{\max})$, it follows that $\big\{\frac{\partial \boldsymbol{u}_n}{\partial t}\big\}_{n \in \mathbb{N}}$ is also uniformly bounded in $L^2(0, T_{\max}; \boldsymbol{V}^{*})$.
\end{proof}
\begin{lem}\label{lemma3}
 Under the assumptions of Theorem \ref{thm:existence}, there exists a weak solution $(c, \boldsymbol{u})$ to the truncated IBVP \eqref{model1}-\eqref{assumption:beta-k} on the interval $[0, T_{\max})$, where $T_{\max}$ is the time established in Lemma \ref{lemma1}.
\end{lem}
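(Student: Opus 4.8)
The plan is to construct the weak solution by passing to the limit $n\to\infty$ in the Galerkin system \eqref{finite weak 1}--\eqref{finite weak 2}, exploiting the uniform estimates of Lemmas \ref{lemma1} and \ref{lemma2}. First I would extract convergent subsequences: since $\{c_n\}$, $\{\boldsymbol{u}_n\}$ and their time derivatives are uniformly bounded in reflexive Bochner spaces, the Banach--Alaoglu theorem together with reflexivity yields a subsequence (not relabeled) and limits $(c,\boldsymbol{u})$ with $c_n\rightharpoonup c$ weakly in $L^2(0,T_{\max};H^1(\Omega))$ and weakly-$*$ in $L^\infty(0,T_{\max};L^2(\Omega))$, $\partial_t c_n\rightharpoonup \partial_t c$ weakly in $L^2(0,T_{\max};(H^1(\Omega))^{*})$, and the analogous weak limits for $\boldsymbol{u}_n$ in $L^2(0,T_{\max};\boldsymbol{V})$, in $L^\infty(0,T_{\max};\boldsymbol{S})$ and in $L^2(0,T_{\max};\boldsymbol{V}^{*})$.

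The weak limits alone cannot control the nonlinear terms, so the next step is to upgrade to strong convergence via the Aubin--Lions lemma (Theorem \ref{aubin-lions}). Applying it with $(X_1,X_2,X_3)=(H^1(\Omega),L^2(\Omega),(H^1(\Omega))^{*})$ gives $c_n\to c$ strongly in $L^2(0,T_{\max};L^2(\Omega))$, and with $(X_1,X_2,X_3)=(\boldsymbol{V},\boldsymbol{S},\boldsymbol{V}^{*})$ gives $\boldsymbol{u}_n\to\boldsymbol{u}$ strongly in $L^2(0,T_{\max};(L^2(\Omega))^d)$. Passing to a further subsequence, $c_n\to c$ and $\boldsymbol{u}_n\to\boldsymbol{u}$ a.e. on $(0,T_{\max})\times\Omega$. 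I would also record two improved integrability bounds that come for free: interpolating the $L^\infty(L^2)\cap L^2(H^1)$ control through Theorem \ref{gagliardo} bounds $\{c_n\}$ and $\{\boldsymbol{u}_n\}$ uniformly in $L^4((0,T_{\max})\times\Omega)$, while the dissipated Forchheimer contribution in \eqref{velocity bounds} bounds $\{\boldsymbol{u}_n\}$ uniformly in $L^3((0,T_{\max})\times\Omega)$.

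With these convergences in hand I would pass to the limit term by term. The viscosity coupling is handled by noting that $\widetilde{\mu}_\ell$ is continuous and bounded by $\mu_\ell$ (see \eqref{eq:trunc_viscosity}), so $\widetilde{\mu}_\ell(c_n)\to\widetilde{\mu}_\ell(c)$ a.e. and boundedly; multiplying this bounded, a.e.-convergent factor by the weakly convergent $\boldsymbol{u}_n$ yields $\widetilde{\mu}_\ell(c_n)\boldsymbol{u}_n\rightharpoonup\widetilde{\mu}_\ell(c)\boldsymbol{u}$. For the convection term $\br{c_n\boldsymbol{u}_n}{\boldsymbol{\nabla}\phi}$ and the quadratic reaction $\br{\kappa c_n^2}{\phi}$, the a.e. convergence combined with the uniform $L^4$ bounds gives, by Vitali's theorem, $c_n\boldsymbol{u}_n\to c\boldsymbol{u}$ and $c_n^2\to c^2$ in $L^2((0,T_{\max})\times\Omega)$, while the diffusion and gradient terms pass by the weak convergence of $\boldsymbol{\nabla}c_n$ and $\boldsymbol{\nabla}\boldsymbol{u}_n$. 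The Forchheimer term is the delicate one: its cubic growth forces us to upgrade the strong $L^2$ convergence of $\boldsymbol{u}_n$ to strong convergence in $L^p$ for every $p<3$, which follows from the a.e. convergence and the uniform $L^3$ bound via Vitali, whence $|\boldsymbol{u}_n|\boldsymbol{u}_n\to|\boldsymbol{u}|\boldsymbol{u}$ in a space dual to the test functions and the term converges.

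Finally, because each basis element $w_k$ and $\boldsymbol{z}_k$ is fixed, the limiting identities hold for every test function in $\bigcup_n W_n$ and $\bigcup_n \boldsymbol{V}_n$; density of these unions in $H^1(\Omega)$ and $\boldsymbol{V}$ extends \eqref{weak1}--\eqref{weak2} to all admissible test functions. To match the initial data, the regularity $c\in L^2(0,T_{\max};H^1(\Omega))$ with $\partial_t c\in L^2(0,T_{\max};(H^1(\Omega))^{*})$ (and the analogue for $\boldsymbol{u}$) yields, by a standard Lions--Magenes embedding, $c\in C([0,T_{\max}];L^2(\Omega))$; an integration by parts in time in the weak formulation, together with the fact that the Galerkin initial data $c_n(0)$ converges to $c_0$ strongly in $L^2(\Omega)$, then identifies $c(0)=c_0$ and likewise $\boldsymbol{u}(0)=\boldsymbol{u}_0$. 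I expect the principal obstacle to be the Forchheimer nonlinearity $\beta|\boldsymbol{u}_n|\boldsymbol{u}_n$: passing to its limit is precisely where weak convergence is insufficient and the extra $L^3((0,T_{\max})\times\Omega)$ integrability must be invoked, and the quadratic reaction $c_n^2$ is a milder instance of the same phenomenon, resolved by the $L^4$ interpolation bound.
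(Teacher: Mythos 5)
Your proposal follows the same skeleton as the paper's proof: extract weak limits via Banach--Alaoglu from the bounds of Lemmas \ref{lemma1}--\ref{lemma2}, upgrade to strong $L^2$ convergence via Aubin--Lions (Theorem \ref{aubin-lions}), pass to the limit term by term against fixed Galerkin test functions, and conclude by density of $\bigcup_m \boldsymbol{V}_m$ and $\bigcup_n W_n$. The one genuine difference is the device used to identify the nonlinear limits: the paper extracts a weak $L^2$ limit of each nonlinear expression ($\widetilde{\mu}_{\ell}(c_n)\boldsymbol{u}_n$, $|\boldsymbol{u}_n|\boldsymbol{u}_n$, $c_n\boldsymbol{u}_n$, $c_n^2$), separately shows strong $L^1$ convergence of the product to the expected limit, and invokes uniqueness of limits, whereas you work with a.e.\ convergence of a further subsequence plus uniform higher integrability and Vitali's theorem. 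Both are standard and interchangeable here, and your treatment of the attainment of initial data (Lions--Magenes continuity plus integration by parts in time) is actually more explicit than the paper's, which only records $c\in C([0,T_{\max}];L^2(\Omega))$ from compactness.

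Two of your quantitative claims are inaccurate, though both are repairable. First, the asserted uniform bound in $L^4((0,T_{\max})\times\Omega)$ from interpolating $L^\infty(L^2)\cap L^2(H^1)$ holds only for $d=2$; for $d=3$ Theorem \ref{gagliardo} gives $\|\phi\|_{L^4}^4 \le M\|\phi\|_{L^2}\|\phi\|_{H^1}^3$, whose time integral requires $L^3(0,T_{\max};H^1)$ control that is not available, and the correct space-time bound is $L^{10/3}$. Consequently your claimed strong $L^2((0,T_{\max})\times\Omega)$ convergence of $c_n^2$ and $c_n\boldsymbol{u}_n$ fails in $d=3$; Vitali yields strong convergence only in $L^q$ for $q<5/3$, which nevertheless suffices to pass to the limit against fixed test functions in $H^1(\Omega)\hookrightarrow L^6(\Omega)$ (note the paper's own assertion of $L^2$ bounds on these products has the same $d=3$ looseness). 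Second, you cannot obtain the uniform $L^3((0,T_{\max})\times\Omega)$ bound on $\boldsymbol{u}_n$ from the Forchheimer dissipation in \eqref{velocity bounds}: assumption \eqref{assumption:beta-k} only gives $\beta\ge 0$, so the coefficient $\beta_1$ multiplying $\|\boldsymbol{u}_n\|_{L^3}^3$ may be zero (e.g.\ $\beta\equiv 0$ is admissible) and that term carries no information. The bound you need holds anyway, since $L^\infty(L^2)\cap L^2(\boldsymbol{V})$ interpolates into $L^{10/3}$ (resp.\ $L^4$) in space-time for $d=3$ (resp.\ $d=2$), both exponents exceeding $3$; with this substitution your Vitali argument for the Forchheimer term closes as intended.
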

\begin{proof} From the uniform boundedness of $\{c_n\}_{n\in\mathbb{N}}$ and $\{{\bf u}_n\}_{n\in\mathbb{N}}$ established in Lemmas \ref{lemma1}-\ref{lemma2} and using the Banach-Alaoglu theorem, we have that there exist weakly convergent subsequences of $\{c_n\}_{n\in\mathbb{N}}$ and $\{{\bf u}_n\}_{n\in\mathbb{N}}$ still denoted by  $\{c_n\}_{n\in\mathbb{N}}$ and $\{{\bf u}_n\}_{n\in\mathbb{N}}$. Now if we denote by $c$ and $\boldsymbol{u}$ the subsequential limits of  $\{c_n\}_{n\in\mathbb{N}}$ and $\{{\bf u}_n\}_{n\in\mathbb{N}}$ respectively, then we have the following   
\begin{align*}
c_n &\rightharpoonup c \quad \  \text{weakly in } L^{2}\left(0,T_{\max};H^1(\Omega)\right), \\
\boldsymbol{u}_n &\rightharpoonup \boldsymbol{u} \quad\   \text{weakly in } L^{2}\left(0,T_{\max};\boldsymbol{V}\right), \\
\frac{\partial c_n}{\partial t} &\rightharpoonup \frac{\partial c}{\partial t} \quad  \text{weakly in } L^{2}\left(0,T_{\max};(H^{1}(\Omega))^*\right)\ \ \mbox{and}\\
\frac{\partial \boldsymbol{u}_n}{\partial t}  &\rightharpoonup \frac{\partial \boldsymbol{u}}{\partial t} \quad  \text{weakly in } L^{2}\left(0,T_{\max}; \boldsymbol{V}^{\ast}\right)
\end{align*}
as $n\rightarrow \infty$. Furthermore, the uniform bounds for $\displaystyle \{c_n\}_{n\in\mathbb{N}}$ in $\displaystyle L^2(0,T_{\max};H^1(\Omega))$ and $\displaystyle \left\{\frac{\partial c_n}{\partial t}\right\}_{n\in\mathbb{N}} $ in $\displaystyle L^2(0,T_{\max};(H^1(\Omega))^*)$ allow us to apply the Aubin-Lions Lemma stated in Theorem \ref{aubin-lions} to get the  following strong convergence for the aformentioned  subsequence $\{c_n\}_{n\in \mathbb{N}}$ 
\begin{align*}
c_n &\rightarrow c \quad \text{strongly in } L^2(0, T_{\max}; L^2(\Omega))\ \mbox{as $n\rightarrow \infty$}.
\end{align*}
A further consequence of this compactness result provides that the limit function $c$ satisfies  $c \in C([0, T_{\max}]; L^2(\Omega))$. Following a similar arguments as that of used for $\{c_n\}_{n\in\mathbb{N}}$, we conclude that  up to a subsequence $\boldsymbol{u}_n \rightarrow \boldsymbol{u}$ strongly in $L^2(0, T_{\max}; \boldsymbol{S})$ and  $\boldsymbol{u} \in C([0, T_{\max}]; \boldsymbol{S})$. Our next aim is to show that the limiting functions $(c,{\bf u})$ are weak solution to the truncated system of IBVP.  We need to prove that the limiting functions $(c,{\bf u})$ satisfy the weak formulations given by \eqref{weak1} and \eqref{weak2}  when $\mu$ is replaced by $\widetilde{\mu}_{\ell}$. To do so, we proceed as follows. Fix $m \leq n$ and  multiply  equation \eqref{finite weak 1} by $\eta \in C_{c}^{\infty}\left(0,T_{\max}\right) $ and integrating with respect to $t$  on $(0,T_{\max})$, we get that  
\begin{align}\label{passing the limit 1}
   & \int_{0}^{T_{\max}} \int_{\Omega} \frac{\partial \boldsymbol{u}_n(t)}{\partial t}\cdot \boldsymbol{v}\, \eta(t) \mathrm{d} x \mathrm{d} t+ \int_{0}^{T_{\max}} \int_{\Omega} \frac{\widetilde{\mu}_{\ell}(c_n(t))}{K}\boldsymbol{u}_n(t)\cdot \boldsymbol{v} \, \eta(t)\mathrm{d} x \mathrm{d} t +  \int_{0}^{T_{\max}} \int_{\Omega} \beta |\boldsymbol{u}_n(t)| \boldsymbol{u}_n(t)\cdot  \boldsymbol{v}\, \eta(t) \mathrm{d} x \mathrm{d} t \nonumber\\ & + \mu_e \int_{0}^{T_{\max}} \int_{\Omega} \boldsymbol{\nabla}\boldsymbol{u}_n(t):\boldsymbol{\nabla}\boldsymbol{v}\, \eta(t) \mathrm{d} x \mathrm{d} t -  \int_{0}^{T_{\max}} \int_{\Omega} \boldsymbol{f}(t) \cdot \boldsymbol{v}\, \eta(t)\mathrm{d} x \mathrm{d} t = 0,~ \forall \, \boldsymbol{v}\in \boldsymbol{V}_m.
\end{align}
Using Lemma \ref{lemma1} and the boundedness of $\widetilde{\mu}_{\ell}$, we have that  the sequence $\left\{\frac{\widetilde{\mu}_{\ell}(c_n)}{K}\boldsymbol{u}_n\right\}_{n\geq m}$ is uniformly bounded in $L^2(0,T_{\max}; \boldsymbol{L}^2(\Omega))$. Hence by the Banach-Alaoglu theorem, there exists a weakly convergent subsequence still denoted by $\left\{\frac{\widetilde{\mu}_{\ell}(c_n)}{K}\boldsymbol{u}_n\right\}_{n\geq m}$ and  $\boldsymbol{\chi} \in L^2(0, T_{\max}; \boldsymbol{L}^2(\Omega))$ such that
$$\frac{\widetilde{\mu}_{\ell}(c_n)}{K}\boldsymbol{u}_n \rightharpoonup \boldsymbol{\chi} \quad \text{weakly in } L^2(0, T_{\max}; \boldsymbol{L}^2(\Omega))\ \mbox{as $n\to \infty$}.$$ 

Now, since $\widetilde{\mu}_{\ell}$ is Lipschitz continuous and $c_n \to c$ strongly in $L^2(0, T_{\max}; L^2(\Omega))$ therefore we get   $\widetilde{\mu}_{\ell}(c_n) \to \widetilde{\mu}_{\ell}(c)$ strongly in $L^2(0, T_{\max}; L^2(\Omega))$. Thus we have $\frac{\widetilde{\mu}_{\ell}(c_n)}{K}\boldsymbol{u}_n$ converges to $\frac{\widetilde{\mu}_{\ell}(c)}{K}\boldsymbol{u}$ in $L^{1}(0, T_{\max}; \boldsymbol{L}^1(\Omega))$, hence using the uniqueness of limit, we get that $\boldsymbol{\chi} = \frac{\widetilde{\mu}_{\ell}(c)}{K}\boldsymbol{u}$.

\begin{align}\label{passing limit 2}
\lim_{n \to \infty}  \int_{0}^{T_{\max}} \int_{\Omega} \frac{\widetilde{\mu}_{\ell}(c_n(t))}{K}\boldsymbol{u}_n(t)\cdot \boldsymbol{v} \, \eta(t)\mathrm{d} x \mathrm{d} t =  \int_{0}^{T_{\max}} \int_{\Omega} \frac{\widetilde{\mu}_{\ell}(c(t))}{K}\boldsymbol{u}(t)\cdot \boldsymbol{v} \, \eta(t)\mathrm{d} x \mathrm{d} t,~ \mbox{for all}  \, \boldsymbol{v} \in \boldsymbol{V}_m .   
\end{align}
Next, we consider the third term on the left-hand side of equation \eqref{passing the limit 1}. Using Lemma \ref{lemma1}, we have that the sequence $\{\boldsymbol{u}_n\}$ is uniformly bounded in $L^2(0,T_{\max};{\bf V})$ therefore combining the Cauchy–Schwarz and Sobolev embedding theorem, we get that  $\{|\boldsymbol{u}_n|\boldsymbol{u}_n\}$  is uniformly bounded in $L^2(0,T_{\max};\boldsymbol{L}^2(\Omega))$. Hence an application of  the Banach-Alaoglu theorem, there exists a  subsequence still denoted by $\{|\boldsymbol{u}_n|\boldsymbol{u}_n\}$ and  $\boldsymbol{\psi} \in L^2(0,T_{\max};\boldsymbol{L}^2(\Omega))$ such that
$$|\boldsymbol{u}_n|\boldsymbol{u}_n \rightharpoonup \boldsymbol{\psi} \quad \text{weakly in } L^2(0,T_{\max};\boldsymbol{L}^2(\Omega))\ \mbox{as $n\rightarrow \infty$}.$$

Now using the strong convergence of $\boldsymbol{u}_n \to \boldsymbol{u}$  in $L^2(0,T_{\max}; \boldsymbol{L}^2(\Omega))$ and repeating the previous arguments, we get that $\psi=\lvert u\rvert u$. Using this, we get
\begin{align}\label{passing limit 3}\lim_{n \to \infty} \int_{0}^{T_{\max}} \int_{\Omega} \beta |\boldsymbol{u}_n(t)| \boldsymbol{u}_n(t)\cdot \boldsymbol{v}\, \eta(t) \mathrm{d} x \mathrm{d} t= \int_{0}^{T_{\max}} \int_{\Omega} \beta |\boldsymbol{u}(t)| \boldsymbol{u}(t)\cdot \boldsymbol{v}\, \eta(t)\mathrm{d} x \mathrm{d} t, ~ \mbox{for all}\ \ \boldsymbol{v} \in \boldsymbol{V_m}.
\end{align}
Now using the fact that $m$ is  arbitrary and the union $\cup_{m=1}^{\infty} \boldsymbol{V}_m$ is dense in $\boldsymbol{V}$,  the convergence results \eqref{passing limit 2} and \eqref{passing limit 3} hold for all $\boldsymbol{v} \in \boldsymbol{V}$, hence taking limit $n\rightarrow \infty$ in 
 \eqref{passing the limit 1},  we  get
\begin{align}\label{Limit passing for velocity field}
 \int_{0}^{T_{\max}} \left[\innerproduct{\frac{\partial \boldsymbol{u}(t)}{\partial t}, \boldsymbol{v}} + \left( \frac{\widetilde{\mu}_{\ell}(c(t))}{K}\boldsymbol{u}(t),\boldsymbol{v}\right) + \left(\beta |\boldsymbol{u}(t)| \boldsymbol{u}(t),  \boldsymbol{v}\right)+\mu_{e} \br{\boldsymbol{\nabla}\boldsymbol{u}(t)}{\boldsymbol{\nabla}\boldsymbol{v}} - \br{\boldsymbol{f}(t)}{\boldsymbol{v}}\right]\eta(t)  \mathrm{d} t = 0,~ \mbox{for all}\ \, \boldsymbol{v}\in \boldsymbol{V}.
\end{align}
Now the above equality holds for all $\eta \in C_{c}^{\infty}(0,T_{\max})$, therefore we have 
\begin{align*}
 \innerproduct{\frac{\partial \boldsymbol{u}(t)}{\partial t}, \boldsymbol{v}} + \left( \frac{\widetilde{\mu}_{\ell}(c(t))}{K}\boldsymbol{u}(t),\boldsymbol{v}\right) + \left(\beta |\boldsymbol{u}(t)| \boldsymbol{u}(t),  \boldsymbol{v}\right)+\mu_{e} \br{\boldsymbol{\nabla}\boldsymbol{u}(t)}{\boldsymbol{\nabla}\boldsymbol{v}} - \br{\boldsymbol{f}(t)}{\boldsymbol{v}} = 0,~ a.e. \text{ on } (0,T_{\max}), ~ \mbox{for all}\ \, \boldsymbol{v}\in \boldsymbol{V}.
\end{align*}
Next, we pass to the limit in equation \eqref{finite weak 2}. We first observe that the sequence $\{c_n \boldsymbol{u}_n\}_{n \in \mathbb{N}}$ is uniformly bounded in $L^2(0, T_{\max};\boldsymbol{L}^2(\Omega))$. Consequently, by the Banach-Alaoglu theorem, there exists a subsequence, still denoted by $\{c_n \boldsymbol{u}_n\}_{n\in\mathbb{N}}$, and a function $\boldsymbol{\chi}$ such that $c_n \boldsymbol{u}_n \rightharpoonup \boldsymbol{\chi}$ weakly in $L^2(0, T_{\max};\boldsymbol{L}^2(\Omega))$. Given that $c_n \to c$ and $\boldsymbol{u}_n \to \boldsymbol{u}$ strongly in $L^2(0,T_{\max};L^2(\Omega))$ and $L^2(0,T_{\max};\boldsymbol{L}^2(\Omega))$ respectively as $n \to \infty$, it follows that $c_n \boldsymbol{u}_n \to c\boldsymbol{u}$ strongly in $L^1(0,T_{\max};\boldsymbol{L}^1(\Omega))$. By the uniqueness of the weak limit, we identify $\boldsymbol{\chi} = c\boldsymbol{u}$, and thus $c_n \boldsymbol{u}_n \rightharpoonup c\boldsymbol{u}$ weakly in $L^2(0, T_{\max};\boldsymbol{L}^2(\Omega))$. Similarly, regarding the nonlinear reaction term, since $\{c^2_n\}_{n\in\mathbb{N}}$ is uniformly bounded in $L^2(0,T_{\max};L^2(\Omega))$, therefore we obtain up to a subsequence  $c^2_n \rightharpoonup c^2$ weakly in $L^2(0,T_{\max};L^2(\Omega))$. As the remaining terms in \eqref{finite weak 2} are linear, we apply arguments analogous to those used for equation \eqref{Limit passing for velocity field} to obtain
\begin{align*}
  \innerproduct{\frac{\partial c(t)} {\partial t},\phi} - \br{c(t)\boldsymbol{u}(t)}{\boldsymbol{\nabla}\phi} + D \br{\boldsymbol{\nabla}c(t)}{\boldsymbol{\nabla}\phi} = \br{\kappa\, c(t)(c(t)-1)}{\phi},~ a.e. \text{ on } (0,T_{\max}), ~ \mbox{for all}\ \, \boldsymbol{\phi}\in H^1(\Omega).
\end{align*}
This establishes the existence of a local solution $(c_{\ell}, \boldsymbol{u}_{\ell})$ for the truncated IBVP system \eqref{model1}-\eqref{assumption:beta-k}, thereby completing the proof of Lemma \ref{lemma3}.
\end{proof}
We prove the existence of global solution to the system of truncated IBVP \eqref{model1}-\eqref{assumption:beta-k}, by proving the maximum-minimum principles for concentration $c_{\ell}$.  In the next lemma, we prove the maximum-minimum principles for concentration $c_{\ell}$. 


\begin{lem}\label{max lemma}
Let $ (c_{\ell}, \boldsymbol{u}_{\ell}) $ be a weak solution to the truncated system of IBVP   \eqref{model1}–\eqref{assumption:beta-k}. Now if $$0 \leq c_0(\boldsymbol{x})   < 1 \quad \text{for almost every } \boldsymbol{x} \in \Omega
$$
then the concentration $c_{\ell}$ satisfies the following 
$$
0 \leq c_{\ell}(t, \boldsymbol{x}) < 1, \quad \text{for $t\in (0,T_{\max})$ and almost every } \ x \in  \Omega.
$$
\end{lem}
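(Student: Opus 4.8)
The plan is to establish the two bounds separately by testing the weak concentration equation \eqref{weak2} (with $\mu$ replaced by $\widetilde{\mu}_\ell$) against carefully chosen cutoff functions and deriving a differential inequality for the $L^2$-norm of the relevant truncation, whose initial value vanishes by hypothesis. Throughout I rely on the regularity $c_\ell \in L^2(0,T_{\max};H^1)\cap C([0,T_{\max}];L^2)$ with $\partial_t c_\ell \in L^2(0,T_{\max};(H^1)^\ast)$, which justifies the chain rule $\langle \partial_t c_\ell, g(c_\ell)\rangle = \tfrac12\tfrac{d}{dt}\|g(c_\ell)\|_{L^2}^2$ for the truncations below.

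For the lower bound I would test \eqref{weak2} with $\phi = c_\ell^- := \max\{-c_\ell, 0\}\in H^1(\Omega)$, valid for a.e.\ $t$. The time-derivative term yields $-\tfrac12\tfrac{d}{dt}\|c_\ell^-\|_{L^2}^2$; the convective term $-(c_\ell\boldsymbol u_\ell,\nabla c_\ell^-)$ reduces to $\tfrac12\int_\Omega \boldsymbol u_\ell\cdot\nabla (c_\ell^-)^2$, which vanishes since $\nabla\cdot\boldsymbol u_\ell=0$ in $\Omega$ and $\boldsymbol u_\ell=\boldsymbol 0$ on $\partial\Omega$; the diffusion term gives $-D\|\nabla c_\ell^-\|_{L^2}^2\le 0$; and on $\{c_\ell<0\}$ the reaction integrand $\kappa c_\ell(c_\ell-1)c_\ell^-$ is nonnegative (both factors $c_\ell$ and $c_\ell-1$ are negative). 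Collecting terms gives $\tfrac{d}{dt}\|c_\ell^-\|_{L^2}^2\le 0$, and since $c_0\ge 0$ forces $\|c_\ell^-(0)\|_{L^2}=0$, we conclude $c_\ell^-\equiv 0$, i.e.\ $c_\ell\ge 0$. Here the reaction term has a favorable sign, so the lower bound is a routine energy estimate.

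For the upper bound I would test with $\phi=(c_\ell-1)^+:=\max\{c_\ell-1,0\}$. The convective term again vanishes (after writing $c_\ell=1+(c_\ell-1)^+$ on $\{c_\ell>1\}$ and using incompressibility together with the no-slip condition), and the diffusion term contributes $D\|\nabla (c_\ell-1)^+\|_{L^2}^2\ge 0$. The essential difficulty is the reaction term: on $\{c_\ell>1\}$ one has $\kappa c_\ell(c_\ell-1)(c_\ell-1)^+=\kappa c_\ell\big((c_\ell-1)^+\big)^2\ge 0$, which is destabilizing, and writing $w:=(c_\ell-1)^+$ it equals $\kappa(w^2+w^3)$. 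Thus
\begin{align*}
\tfrac12\tfrac{d}{dt}\|w\|_{L^2}^2 + D\|\nabla w\|_{L^2}^2 \le \kappa_2\big(\|w\|_{L^2}^2+\|w\|_{L^3}^3\big).
\end{align*}
The cubic term is the main obstacle, since it is superlinear and precludes a naive Gronwall argument. I would control it via the Gagliardo–Nirenberg inequality (Theorem \ref{gagliardo}), bounding $\|w\|_{L^3}^3$ by $M\|w\|_{L^2}^{3-d/2}\|w\|_{H^1}^{d/2}$, followed by Young's inequality (Theorem \ref{youngs inequaity}) to absorb the resulting $\|\nabla w\|_{L^2}^2$ into the diffusion term for a small parameter $\epsilon<D$. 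This yields a closed inequality $y'(t)\le M(y(t)+y(t)^p)$ for $y:=\|w\|_{L^2}^2$ with some $p>1$ and $y(0)=0$.

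Finally, I would invoke the comparison principle for ODEs (Theorem \ref{Comparison principle}): the majorant problem $z'=M(z+z^p)$, $z(0)=0$, has the unique solution $z\equiv 0$, whence $y\equiv 0$ and $c_\ell\le 1$. Equivalently, this last step can be phrased as a continuity/contradiction argument — the set of times on which $w$ vanishes is closed (by $c_\ell\in C([0,T_{\max}];L^2)$) and, by the differential inequality with zero datum, also open, hence all of $[0,T_{\max})$. The strict inequality $c_\ell<1$ is the most delicate point: once $c_\ell\le 1$ is known, I would argue by contradiction, noting that $v:=1-c_\ell\ge 0$ is a supersolution of the linear advection–diffusion equation $\partial_t v+\boldsymbol u_\ell\cdot\nabla v-D\Delta v=\kappa(1-v)v\ge 0$ with $v(0)=1-c_0>0$ a.e., so that a maximum-principle argument yields $v>0$, i.e.\ $c_\ell<1$. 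I expect the handling of the superlinear reaction term (the cubic $w^3$) to be the crux of the argument, with the strict inequality being the secondary technical point.
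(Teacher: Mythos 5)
Your lower-bound argument (testing with $c_\ell^- = \max\{-c_\ell,0\}$) is, up to sign conventions, identical to the paper's, which tests with $\widetilde c_\ell = \min\{0,c_\ell\}$; both exploit the favorable sign of the reaction term on $\{c_\ell \le 0\}$ and the vanishing of the convective term.

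For the upper bound your route genuinely diverges from the paper's, and for the \emph{non-strict} bound $c_\ell \le 1$ it is correct and arguably cleaner: testing with $w=(c_\ell-1)^+$, identifying the reaction contribution as $\int \kappa\,(w^2+w^3)$, taming the cubic term by Gagliardo--Nirenberg plus Young with $\epsilon < D$, and closing via the ODE comparison $y' \le M(y+y^{p})$, $y(0)=0$. Your exponents check out ($p=2$ for $d=2$, $p=3$ for $d=3$), and since $z \mapsto M(z+z^{p})$ is locally Lipschitz at $0$ for $p>1$, the majorant problem has $z \equiv 0$ as its unique solution, so $y \equiv 0$ follows rigorously. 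The paper never isolates this step: it goes directly for the strict inequality by a first-hitting-time contradiction, assuming a first $t_0$ with $c_\ell(t_0)\ge 1$ on a set $\Omega_0$ of positive measure, testing with the localized function $c_\ell\psi^2$ for $\psi \in C_c^\infty(\Omega_0)$, integrating the resulting energy identity over $(t_0-\epsilon,t_0)$ where $c_\ell<1$, and contradicting the $L^2$-continuity of $c_\ell$ in time. Your approach buys a transparent, quantitative handling of the superlinear term; the paper's buys strictness without ever needing the truncation energy estimate.

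The genuine gap is the step you label secondary: upgrading $c_\ell \le 1$ to $c_\ell < 1$. The hypothesis is only $c_0 < 1$ a.e., which does \emph{not} give $\esssup c_0 < 1$, so your energy method cannot detect strictness (had the hypothesis been $0 \le c_0 \le M < 1$, as in the paper's remark following the lemma, testing with $(c_\ell-M)^+$ would close your argument completely). Your proposed fix --- treating $v = 1-c_\ell \ge 0$ as a supersolution and invoking ``a maximum-principle argument'' to get $v>0$ --- is not available at this regularity: $v$ solves the advection--diffusion inequality only weakly, with drift $\boldsymbol{u}_\ell \in L^\infty(0,T;\boldsymbol{S}) \cap L^2(0,T;\boldsymbol{V})$, which fails the integrability thresholds of Aronson--Serrin type (roughly $d/p + 2/q \le 1$ for drift in $L^q_t L^p_x$; here $d=3$ gives $3/6+2/2 = 3/2$) under which the parabolic strong maximum principle or Harnack inequality holds for weak solutions. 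There is no off-the-shelf strong maximum principle to cite, and proving one in this setting is nontrivial --- indeed the paper's cutoff-contradiction argument with $\phi = c_\ell\psi^2$ exists precisely to obtain strictness using only the weak formulation and $c_\ell \in C([0,T_{\max}];L^2(\Omega))$. To complete your proof as stated you would need either to reproduce an argument of that localized type or to strengthen the hypothesis to a uniform bound $c_0 \le M < 1$.
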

\begin{proof} To establish the non-negativity of the concentration, we define $ \widetilde{c}_{\ell} := \min\{0,c_{\ell}\}$. Subsequently, by substituting $\phi =\widetilde{c}_{\ell}$ into equation \eqref{weak2}, we obtain
\begin{align*}
    \innerproduct{\frac{\partial c_{\ell}(t)} {\partial t},\widetilde{c}_{\ell}(t)} + \br{\boldsymbol{u}(t)\cdot \boldsymbol{\nabla}c_{\ell}(t)}{\widetilde{c}_{\ell}(t)} + D \br{\boldsymbol{\nabla}c_{\ell}(t)}{\boldsymbol{\nabla}\widetilde{c}_{\ell}(t)} = \br{\kappa\, c_{\ell}(t)(c_{\ell}(t)-1)}{\widetilde{c}_{\ell}(t)}.
\end{align*}
Using the the definition of $\tilde{c_{\ell}}$, the above equation reduce to
\begin{align*}
\frac{1}{2}\frac{\mathrm{d}}{\mathrm{d} t}\nr{\widetilde{c}_{\ell}(t)}  + \frac{1}{2}\br{\boldsymbol{u}}{\boldsymbol{\nabla}(\widetilde{c}_{\ell}(t))^2} + D \nr{\boldsymbol{\nabla}\widetilde{c}_{\ell}(t)}  = \int_{\{c_{\ell} \leq 0\}} \kappa \, c_{\ell}^{2}(t)(c_{\ell}(t)-1) \mathrm{d} x .
\end{align*}
Applying the divergence theorem, the incompressibility condition $\nabla \cdot \boldsymbol{u}_{\ell} = 0$ and the boundary conditions \eqref{boundary conditions}, we find that the convective term vanishes. Furthermore, noting that the right-hand side is non-positive and dropping the non-negative term on the left-hand side, we obtain
$$\frac{1}{2}\frac{\mathrm{d}}{\mathrm{d} t}\|\widetilde{c}_{\ell}(t)\|_{L^2}^2 \leq 0.$$
Integrating with respect to $t$ from $0$ to $\tau \in (0,T_{\max})$ yields 
\begin{align*}
\nr{\widetilde{c}_{\ell}(\tau)}  \leq \nr{\widetilde{c}_{\ell}(0)}, \quad \text{ for all } \tau \in (0, T_{\max}).
\end{align*}
Now  $\widetilde{c}_{\ell}(0) = 0$ gives that $\widetilde{c}_{\ell}(t,x)=0$ for $t\in (0,T_{\max})$ and almost every  $x\in\Omega$, hence using the definition of $\widetilde{c}_{\ell}$,  we get $c_{\ell}(t,\boldsymbol{x}) \geq 0$ for $t\in (0,T_{\max})$ and almost every  $x\in\Omega$.
Next to establish the bound $c_{\ell}(t, \boldsymbol{x}) < 1$ for $t\in (0,T_{\max})$ and  almost every $x\in \Omega$, we proceed by contradiction. Suppose if possible that this assertion is false, then there exists a time $t_0 \in (0, T_{\max})$ and a set of positive measure $\Omega_0 \subset \Omega$ such that
$$c_{\ell}(t_0, \boldsymbol{x}) \geq 1 \quad \text{for a.e. } \boldsymbol{x} \in \Omega_0,$$
while for all $t \in [0, t_0)$,$$c_{\ell}(t, \boldsymbol{x}) < 1 \quad \text{for a.e. } \boldsymbol{x} \in \Omega.$$
Let \( \psi \in C_c^\infty(\Omega_0) \) be a smooth cut-off function such that \( 0 \leq \psi \leq 1 \) in \( \Omega_0 \). Using \(\phi = c_{\ell}(t)\psi^2\) in equation \eqref{weak2}, we have 
\begin{align}\label{max:eq1}
\innerproduct{\frac{\partial c_{\ell}(t)} {\partial t},c_{\ell}(t)\psi^2} + \br{\boldsymbol{u}(t)\cdot\boldsymbol{\nabla}c_{\ell}(t)}{c_{\ell}(t)\psi^2} + D \br{\boldsymbol{\nabla}c_{\ell}(t)}{\boldsymbol{\nabla}(c_{\ell}(t)\psi^2)} = \br{\kappa\, c_{\ell}(t)(c_{\ell}(t)-1)}{c_{\ell}(t)\psi^2} .
\end{align} 
We now analyze each term of the above equation separately. Since $\psi$ is independent of time, the first term becomes
\begin{align}\label{max:eq2}
   \innerproduct{\frac{\partial c_{\ell}(t)} {\partial t},c_{\ell}(t)\psi^2} = \frac{1}{2} \frac{\mathrm{d}  }{\mathrm{d} t}\nr{c_{\ell}(t)\psi}.
\end{align}
We rewrite the convection term in the following way
\begin{align*}
    \br{\boldsymbol{u}(t)\cdot\boldsymbol{\nabla}c_{\ell}(t)}{c_{\ell}(t)\psi^2} = \frac{1}{2} \int_{\Omega} \boldsymbol{u}(t)\cdot \boldsymbol{\nabla}(c_{\ell}(t)\psi^2) \mathrm{d} x  - \frac{1}{2} \int_{\Omega} c_{\ell}^2(t) \boldsymbol{u}(t)\cdot \boldsymbol{\nabla}\psi^2 \mathrm{d} x .
\end{align*}
Applying integration by parts and utilizing equations \eqref{model1} and \eqref{boundary conditions}, the first term vanishes. Consequently, the equation reduces to
\begin{align}\label{max:eq3}
    \br{\boldsymbol{u}(t)\cdot\boldsymbol{\nabla}c_{\ell}(t)}{c_{\ell}(t)\psi^2} =  - \frac{1}{2} \int_{\Omega} c_{\ell}^2(t) \boldsymbol{u}(t)\cdot \boldsymbol{\nabla}\psi^2 \mathrm{d} x .
\end{align}
For the diffusion term, we expand the gradient of the test function as $\boldsymbol{\nabla}(c_{\ell}(t)\psi^2) = \psi^2\boldsymbol{\nabla}c_{\ell}(t) + 2c(t)\psi\boldsymbol{\nabla}\psi$, which yields
\begin{align}\label{max:eq4}
\br{\boldsymbol{\nabla}c_{\ell}(t)}{\boldsymbol{\nabla}(c_{\ell}(t)\psi^2)} = \int_{\Omega} \left( \psi^2 |\boldsymbol{\nabla}c_{\ell}(t)|^2 + 2c(t)\psi \boldsymbol{\nabla}c_{\ell}(t) \cdot \boldsymbol{\nabla}\psi \right) \mathrm{d} x.
\end{align}
To further simplify this term, taking the square on both sides of  $\boldsymbol{\nabla}(c_{\ell}(t)\psi) = \psi\boldsymbol{\nabla}c_{\ell}(t) + c_{\ell}(t)\boldsymbol{\nabla}\psi$, we obtain
\begin{align*}
  2 c_{\ell}(t) \psi \boldsymbol{\nabla} \psi \cdot \boldsymbol{\nabla} c_{\ell}(t)  =   |\boldsymbol{\nabla}(c_{\ell}(t) \psi )|^2 -  c_{\ell}^2(t) |\boldsymbol{\nabla} \psi|^2 - \psi^2 |\boldsymbol{\nabla}c_{\ell}(t)|^2. 
\end{align*}
Using this  in equation \eqref{max:eq4}, we get 
\begin{align}\label{max:eq5}
  \br{\boldsymbol{\nabla}c_{\ell}(t)}{\boldsymbol{\nabla}(c_{\ell}(t)\psi^2)} = \int_{\Omega} \left( |\boldsymbol{\nabla}(c_{\ell} \psi)|^2 -  c_{\ell}^2 |\boldsymbol{\nabla} \psi|^2 \right)\mathrm{d} x.
\end{align}
The reaction term can be written as
\begin{align}\label{max:eq6}
  \br{\kappa\, c_{\ell}(t)(c_{\ell}(t)-1)}{c_{\ell}(t)\psi^2}  =  \int_{\Omega} \kappa \, \psi^2 c_{\ell}^2(t) \left(c_{\ell}(t)-1 \right)\mathrm{d} x.
\end{align}
Combining   equations \eqref{max:eq2}-\eqref{max:eq6} in equation \eqref{max:eq1}, we get 
\begin{align*}
\frac{1}{2} \frac{\mathrm{d}}{\mathrm{d} t}\nr{c_{\ell}(t)\psi} + \int_{\Omega}|\boldsymbol{\nabla}(c_{\ell}(t) \psi)|^2 \mathrm{d} x  =    \frac{1}{2} \int_{\Omega} c_{\ell}^2(t) \boldsymbol{u}(t)\cdot \boldsymbol{\nabla}\psi^2 \mathrm{d} x  + \int_{\Omega} c_{\ell}^2 (t)|\boldsymbol{\nabla} \psi|^2 \mathrm{d} x +  \int_{\Omega} \kappa\, \psi^2 c_{\ell}^2(t) \left(c_{\ell}(t)-1 \right) \mathrm{d} x.
\end{align*}
For $\epsilon>0$ to be specified later, we integrate  the above equality from $t_{0} - \epsilon$ to $t_0$ to obtain
\begin{align*}
 &\nr{c_{\ell}(t_0)\psi} + 2 \int_{t_{0}-\epsilon}^{t_{0}} \int_{\Omega}|\boldsymbol{\nabla}(c_{\ell}(t) \psi)|^2 \mathrm{d} x \mathrm{d} t \\&= \int_{t_{0}-\epsilon}^{t_{0}} \int_{\Omega} c_{\ell}^2(t) \boldsymbol{u}(t)\cdot \boldsymbol{\nabla}\psi^2 \mathrm{d} x \mathrm{d} t + 2 \int_{t_{0}-\epsilon}^{t_{0}} \int_{\Omega} c_{\ell}^2(t) |\boldsymbol{\nabla} \psi|^2 \mathrm{d} x \mathrm{d} t + 2  \int_{t_{0}-\epsilon}^{t_{0}} \int_{\Omega} \kappa\, \psi^2 c_{\ell}^2(t) \left(c_{\ell}(t)-1 \right)\mathrm{d} x \mathrm{d} t + \nr{c_{\ell}(t_{0} - \epsilon)\psi}.
\end{align*}
We now utilize the fact that $c_{\ell}(t,x) < 1$ for all $t\in (t_0 - \epsilon, t_{0})$.
\begin{align*}
   \nr{c_{\ell}(t_0)\psi}  + 2 \int_{t_{0}-\epsilon}^{t_{0}} \int_{\Omega}|\boldsymbol{\nabla}(c_{\ell}(t) \psi)|^2 \mathrm{d} x \mathrm{d} t < \left(\|\boldsymbol{u}\|_{L^{\infty}(t_{0}-\epsilon, t_{0};\boldsymbol{S})} \nrm{\nabla \psi^2} + 2 \nr{\boldsymbol{\nabla}\psi}\right) \epsilon +  \nr{c_{\ell}(t_{0} - \epsilon)\psi}.
\end{align*}
Now using the continuity of $c_{\ell}$ in $t$ variable and the fact that $c_{\ell}(t_0,x)\geq 1$ for almost every $x\in \Omega_0$, we can choose $\epsilon>0$ small enough such that \[\nr{\psi}\leq \nr{c_{\ell}(t_0)\psi} \leq    \nr{c_{\ell}(t_{0} - \epsilon)\psi}<\nr{\psi} \]
where  in the last step the $\lVert\cdot\rVert_{L^2}$ norm is on $\Omega$ because of the assumption on $\psi$. 
This yields a contradiction, hence $c_{\ell}(t,x)<1$ for $t\in (0,T_{\max})$ and almost every $x\in\Omega$. 
\end{proof}
\begin{remark}
This lemma proves that if we assume $M \in (0,1)$ and $0 \leq c_0 \leq M $, then $0 \leq c_{\ell}(t,x) \leq M$ for almost all $(t,x) \in (0,T) \times \Omega$.    
\end{remark} 
Next, with the help of the maximum-minimum principles proved in Lemma \ref{max lemma}, we will show that the local weak solution established in Lemma \ref{lemma3} can be extended from $[0, T_{\max})$ to  $(0, T)$ for any $T>0$. This will be shown in the following lemma. 

\begin{lem}\label{Global existence for truncated}
    The local weak solution $(c_{\ell}, \boldsymbol{u}_{\ell})$ established in Lemma \ref{lemma3} on the interval $(0, T_{\max})$ can be extended to the full interval $(0, T)$ for any $T>0$. 
\end{lem}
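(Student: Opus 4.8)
The plan is to remove the finite-time restriction on $T_{\max}$ by exploiting the uniform bound furnished by the maximum principle, and then to continue the solution by a bootstrap argument. Recall that $T_{\max}$ arose in Lemma \ref{lemma1} solely as the blow-up time of the comparison ODE $z' = M_\epsilon(z + z^{(6-d)/(4-d)})$, whose superlinear right-hand side (a consequence of the cubic term $\kappa c_\ell^3$) forces finite-time blow-up of the crude $L^2$-energy estimate. The decisive observation is that Lemma \ref{max lemma} replaces this crude estimate by the sharp pointwise bound $0 \le c_\ell(t,\boldsymbol{x}) < 1$, which holds on the entire interval of existence by the same contradiction argument. Consequently $\nrm{c_\ell(t)}^2 \le |\Omega|$ uniformly in $t$, so the quantity that was feared to blow up is in fact bounded by a constant depending only on $|\Omega|$ and independent of the length of the time interval.

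Second, I would collect uniform a priori bounds up to $T_{\max}$. The velocity estimate \eqref{velocity bounds} together with Gronwall's inequality gives $\nrm{\boldsymbol{u}_\ell(t)}^2 \le e^{t}\big(\nrm{\boldsymbol{u}_0}^2 + \int_0^{t}\nrm{\boldsymbol{f}(s)}^2\,\mathrm{d}s\big)$, which grows at most exponentially and hence stays finite on any bounded interval, so there is no finite-time singularity in the velocity. Integrating the energy inequalities and repeating the duality estimates of Lemma \ref{lemma2} then shows that $c_\ell$ and $\boldsymbol{u}_\ell$ remain bounded in $L^\infty(0,T_{\max};L^2)\cap L^2(0,T_{\max};H^1)$ and $L^\infty(0,T_{\max};\boldsymbol{S})\cap L^2(0,T_{\max};\boldsymbol{V})$, with their time derivatives bounded in $L^2(0,T_{\max};(H^1)^{*})$ and $L^2(0,T_{\max};\boldsymbol{V}^{*})$ respectively. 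By the Aubin–Lions lemma (Theorem \ref{aubin-lions}), $c_\ell \in C([0,T_{\max}];L^2(\Omega))$ and $\boldsymbol{u}_\ell \in C([0,T_{\max}];\boldsymbol{S})$, so the traces $c_\ell(T_{\max}) \in L^2(\Omega)$ and $\boldsymbol{u}_\ell(T_{\max}) \in \boldsymbol{S}$ are well defined, and by continuity $0 \le c_\ell(T_{\max}) \le 1$ almost everywhere.

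Finally I would run the continuation argument. Taking $(c_\ell(T_{\max}), \boldsymbol{u}_\ell(T_{\max}))$ as new initial data — admissible since it lies in $L^2(\Omega)\times\boldsymbol{S}$ with $0 \le c_\ell(T_{\max}) \le 1$ — Lemmas \ref{lemma1}–\ref{lemma3} produce a weak solution on a further interval; crucially, since the local existence time in Lemma \ref{lemma1} depends only on $M_\epsilon$ and the $L^2$ norm of the initial concentration, and the latter is now controlled by $\sqrt{|\Omega|}$ at every restart, the step length is bounded below by a fixed $\delta > 0$ independent of the base point. Gluing successive solutions therefore advances the existence time in uniform increments of size $\delta$, so after finitely many steps the solution is defined on $[0,T]$ for any prescribed $T > 0$; equivalently, writing $T^{*}$ for the maximal existence time, the uniform lower bound $\delta$ on the continuation step precludes $T^{*} < \infty$. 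The main obstacle is precisely this uniformity: one must verify that the restart time does not shrink to zero as the procedure is iterated, which is guaranteed exactly because the maximum principle caps $\nrm{c_\ell}$ uniformly, so that the superlinear ODE no longer governs the existence time.
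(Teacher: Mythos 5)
Your proposal is correct and rests on the same mechanism as the paper's proof --- the maximum principle caps $\nrm{c_{\ell}(t)}^2$ by $|\Omega|$ uniformly in time, which defuses the superlinear comparison ODE that produced the restriction $T_{\max}$ --- but the two arguments are formalized differently. The paper argues by contradiction: it supposes the maximal existence time satisfies $T^{\ast}<T$, invokes the blow-up alternative $\lim_{t\to T^{\ast-}}\nrm{c_{\ell}(t)}=\infty$, and then contradicts it with the uniform estimate $\nrm{c_{\ell}(t)}\le \nr{c_0}e^{2\kappa_2 T^{\ast}}$ obtained from the pointwise bound; no explicit restarting is performed. You instead run a constructive iterated continuation: take the trace at $T_{\max}$ (legitimate, as you note, by the Aubin--Lions continuity $c_{\ell}\in C([0,T_{\max}];L^2(\Omega))$), re-solve via Lemmas \ref{lemma1}--\ref{lemma3}, and observe that the new step length is bounded below by a fixed $\delta>0$ because the local existence time depends only on $M_\epsilon$ and $\nrm{c_{\ell}(T_{\max})}\le\sqrt{|\Omega|}$. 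Your version is in a sense more complete, since the blow-up alternative the paper invokes is \emph{justified} precisely by the uniform step-length fact you make explicit; the paper's version is shorter but leaves that dependence tacit. One caveat in your route: at each restart you only get $0\le c_{\ell}(T_{\max})\le 1$ by continuity, whereas Lemma \ref{max lemma} as stated requires the strict inequality $c_0<1$ to re-propagate the bound on the next interval. This is repairable --- by the Remark following Lemma \ref{max lemma}, an initial bound $c_0\le M_0<1$ propagates as $c_{\ell}\le M_0$, so the strict bound survives every restart with the same constant $M_0$ --- but you should say so, since otherwise the admissibility of the restart data is exactly the point where the gluing could stall (the paper's contradiction formulation sidesteps this trace issue, though it inherits the same $\le 1$ versus $<1$ tension from Theorem \ref{thm:existence} itself).
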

\begin{proof}
 To extend the local solution $(c_{\ell},\boldsymbol{u}_{\ell})$ to the entire interval $(0, T)$ for any $T>0$, we employ the continuation argument. Let if possible assume  that the maximal time of existence is $0<T^{\ast} < T,$ then we have that
$$\lim_{t \to T^{\ast-}} \|c_{\ell}(t)\|_{L^2(\Omega)} = \infty.$$
Now utilizing the bound $0 \leq c_{\ell} < 1$ and the estimates established in Lemmas \ref{lemma1} and \ref{lemma2}, we assert that $c_{\ell} \in L^{2}(0,T^{\ast};H^1(\Omega))$ and $\partial_t c_{\ell} \in L^{2}(0,T^{\ast};(H^1(\Omega))^{\ast})$. By the Aubin-Lions lemma, this regularity guarantees that $c_{\ell} \in C([0,T^{\ast}]; L^2(\Omega))$. Furthermore, repeating the previous argument, we obtain that 
\begin{align*}
\|c_{\ell}(t)\|_{L^2(\Omega)} \leq \|c_0\|_{L^2(\Omega)} e^{\kappa_2 T^{\ast}} \quad \text{for all } t \in [0, T^{\ast}).
\end{align*}
This uniform estimate contradicts the blow-up hypothesis. Consequently, we must have $T^* = T$, and the local solution can be extended to the entire interval $(0, T)$ for any $T>0$. Thus, for any fixed truncation parameter $\ell > 0$, we have obtained a global solution $(c_{\ell}, \boldsymbol{u}_{\ell})$ to the truncated IBVP \eqref{model1}-\eqref{assumption:beta-k}.
\end{proof}
 To complete the proof of Theorem \ref{thm:existence}, it remains to show that the limit pair $(c, \boldsymbol{u})$, obtained as $\ell \to \infty$, is a weak solution to the original IBVP \eqref{model1}-\eqref{assumption:beta-k}.

\subsection{Proof of Theorem \ref{thm:existence}} Using Lemma  \ref{max lemma}, we observe that for each $\ell\in\mathbb{N}$,   $0 \leq c_\ell(t,x) < 1$ for  $t\in (0,T)$ and almost every $x\in \Omega$. Hence,  the sequence $\{c_{\ell}\}_{\ell\in\mathbb{N}}$  is uniformly bounded and after utilizing this along with repeating the arguments used in  Lemmas \ref{lemma1} and \ref{lemma2}, we get that $\{c_{\ell}\}_{\ell \in \mathbb{N}}$ and $\displaystyle \left\{\frac{\partial_t c_{\ell}}{\partial t}\right\}_{\ell \in \mathbb{N}}$ are uniformly bounded in $L^2(0,T;H^1(\Omega))$ and $L^2(0,T; (H^1(\Omega))^{\ast})$, respectively. After adapting the similar arguments for velocity field, we get that   $\{\boldsymbol{u}_{\ell}\}_{\ell\in\mathbb{N}}$ and $\displaystyle \left\{\frac{\partial_t \boldsymbol{u}_{\ell}}{\partial t}\right\}_{\ell \in \mathbb{N}}$ are uniformly bounded in $L^2(0,T;\boldsymbol{V})$ and $L^2(0,T;\boldsymbol{V}^{\ast})$ respectively. These uniform bounds allow us to apply the Aubin-Lions lemma to extract subsequences still denoted by $\{c_\ell\}_{\ell\in\mathbb{N}}$ and $\{\boldsymbol{u}_\ell\}_{\ell\in\mathbb{N}}$ such that $c_\ell \to c$ and $\boldsymbol{u}_\ell \to \boldsymbol{u}$ strongly in $L^2(0, T; L^2(\Omega))$ and $L^2(0, T; \boldsymbol{L}^2(\Omega))$ as $\ell \to \infty$ respectively. 
Next, using the triangle inequality, we get 
\begin{align}\label{Lipschitzness viscosity}
\|\widetilde{\mu}_{\ell}(c_{\ell}) - \mu(c)\|_{L^2} \leq \|\widetilde{\mu}_{\ell}(c_{\ell}) - \mu(c_{\ell})\|_{L^2} + \|\mu(c_{\ell}) - \mu(c)\|_{L^2}
\end{align}
we show that  $\widetilde{\mu}_{\ell}(c_{\ell}) \to \mu(c)$ strongly in $L^2(0, T; L^2(\Omega))$ as $\ell \to \infty$. Now using Lemma \ref{max lemma} along with the definition of $\widetilde{\mu}$, we get that $\widetilde{\mu}_{\ell}(c_{\ell}) = \mu(c_{\ell})$ for all $\ell \geq 1$, which amounts in vanishing of first term on the right-hand side of inequality \eqref{Lipschitzness viscosity}. Finally, using the local Lipschitzness of $\mu$ along with the uniform bound on $\{c_{\ell}\}_{\ell\in\mathbb{N}}$ and its strong convergence to $c$ in $L^2(0, T; L^2(\Omega))$, we get  that $\mu(c_{\ell}) \to \mu(c)$ in $L^2(0,T; L^2(\Omega))$ as $\ell \to \infty$. Combining these observations, we conclude from equation \eqref{Lipschitzness viscosity} that $\widetilde{\mu}_{\ell}(c_{\ell}) \to \mu(c)$ strongly in $L^2(0, T; L^2(\Omega))$ as $\ell \to \infty$ which along with the arguments employed in Lemma \ref{lemma3}, gives that the limit $(c, \boldsymbol{u})$ of $\left(\{c_\ell\}_{\ell\in\mathbb{N}},\{{\bf u}_{\ell}\}_{\ell\in\mathbb{N}}\right)$  constitutes a weak solution to the  IBVP \eqref{model1}-\eqref{assumption:beta-k}. This concludes the proof of Theorem \ref{thm:existence}.

\subsection{Continuous Dependence and Uniqueness of Weak Solutions}\label{sec: continous dependence and uniqueness}
\begin{thm}[Continuous dependence on initial data]\label{Thm:continous dependence}
Let $(c_1,\boldsymbol{u}_1)$ and $(c_2,\boldsymbol{u}_2)$ be two weak solutions of IBVP \eqref{model1}-\eqref{assumption:beta-k} with the initial data
$c_{1}(0,\boldsymbol{x})$ and $c_{2}(0,\boldsymbol{x})$, respectively. Then for all $t \in [0,T]$, we have
    \begin{align*}
       \nr{c_1(t) - c_2(t)} + \nr{\boldsymbol{u}_1(t) - \boldsymbol{u}_2(t)} \leq \left( \nr{c_1(0) - c_2(0)} + \nr{\boldsymbol{u}_1(0) - \boldsymbol{u}_2(0)} \right) \exp{\left(2 \int_{0}^{T} \Phi(\tau) d \tau\right)},
\end{align*}
for some nonnegative  function  $\Phi\in L^{1}(0,T)$ depending on $c_1,c_2$, ${\bf u}_{1},{\bf u}_{2}$, $\mu$ and $\kappa$. 
\end{thm}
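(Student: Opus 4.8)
The plan is to bound the combined energy $E(t):=\nr{c_1(t)-c_2(t)}+\nr{\boldsymbol{u}_1(t)-\boldsymbol{u}_2(t)}$ by a Gronwall argument. Write $c:=c_1-c_2$ and $\boldsymbol{u}:=\boldsymbol{u}_1-\boldsymbol{u}_2$. Since both pairs are weak solutions of the IBVP, they inherit the maximum principle of Lemma \ref{max lemma}, so $0\le c_i\le 1$; consequently $\mu(c_i)\ge 0$, $\norm{c_i}\le 1$, and the local Lipschitz continuity of $\mu$ on $[0,1]$ furnishes a constant $L>0$ with $|\mu(c_1)-\mu(c_2)|\le L|c|$ almost everywhere. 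These three consequences are precisely what make the nonlinear terms tractable.

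First I would subtract the velocity identities \eqref{weak1} for the two solutions and test with $\boldsymbol{v}=\boldsymbol{u}$. The forcing cancels, the self-viscosity term $\tfrac{1}{K}(\mu(c_1)\boldsymbol{u},\boldsymbol{u})\ge 0$ is discarded, and the Forchheimer difference is nonnegative after pairing with $\boldsymbol{u}$ by the elementary monotonicity inequality $(|\boldsymbol{a}|\boldsymbol{a}-|\boldsymbol{b}|\boldsymbol{b})\cdot(\boldsymbol{a}-\boldsymbol{b})\ge(|\boldsymbol{a}|-|\boldsymbol{b}|)^2(|\boldsymbol{a}|+|\boldsymbol{b}|)\ge 0$ together with $\beta\ge 0$. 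Splitting the viscosity mismatch as $\tfrac{\mu(c_1)}{K}\boldsymbol{u}_1-\tfrac{\mu(c_2)}{K}\boldsymbol{u}_2=\tfrac{\mu(c_1)}{K}\boldsymbol{u}+\tfrac{\mu(c_1)-\mu(c_2)}{K}\boldsymbol{u}_2$ and using the Lipschitz bound leaves
\begin{align*}
\frac{1}{2}\frac{\mathrm{d}}{\mathrm{d}t}\nr{\boldsymbol{u}}+\mu_e\nr{\boldsymbol{\nabla}\boldsymbol{u}}\le\frac{L}{K}\int_\Omega|c|\,|\boldsymbol{u}_2|\,|\boldsymbol{u}|\,\mathrm{d}x.
\end{align*}
In parallel I would subtract the concentration identities \eqref{weak2} and test with $\phi=c$. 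The reaction difference factorizes as $c_1(c_1-1)-c_2(c_2-1)=(c_1+c_2-1)c$, which because $|c_1+c_2-1|\le 1$ is bounded by $\kappa_2\nr{c}$; the convective difference splits as $c_1\boldsymbol{u}_1-c_2\boldsymbol{u}_2=c_1\boldsymbol{u}+c\boldsymbol{u}_2$, whose second piece satisfies $\int_\Omega c\,\boldsymbol{u}_2\cdot\boldsymbol{\nabla}c=\tfrac{1}{2}\int_\Omega\boldsymbol{u}_2\cdot\boldsymbol{\nabla}(c^2)=0$ by $\boldsymbol{\nabla}\cdot\boldsymbol{u}_2=0$ and $\boldsymbol{u}_2=\boldsymbol{0}$ on $\partial\Omega$, and whose first piece is controlled via $\norm{c_1}\le 1$ and Young's inequality (Theorem \ref{youngs inequaity}) so as to absorb half of the diffusion. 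This yields
\begin{align*}
\frac{1}{2}\frac{\mathrm{d}}{\mathrm{d}t}\nr{c}+\frac{D}{2}\nr{\boldsymbol{\nabla}c}\le\frac{1}{2D}\nr{\boldsymbol{u}}+\kappa_2\nr{c}.
\end{align*}

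Adding the two estimates, the only genuinely coupled term left is $\tfrac{L}{K}\int_\Omega|c||\boldsymbol{u}_2||\boldsymbol{u}|$. I would bound it by $\tfrac{L}{K}\|c\|_{L^4}\|\boldsymbol{u}_2\|_{L^4}\nrm{\boldsymbol{u}}$ via H\"older, apply Gagliardo--Nirenberg (Theorem \ref{gagliardo}) to write $\|c\|_{L^4}$ in terms of $\nrm{c}$ and $\|c\|_{H^1}$, and then use Young's inequality with a small parameter to absorb the resulting $\|c\|_{H^1}^2$ into $\tfrac{D}{2}\nr{\boldsymbol{\nabla}c}$ (plus a harmless multiple of $\nr{c}$). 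What survives has the form $\Phi(t)\,E(t)$, where $\Phi(t)$ is a constant (from $\kappa_2$ and $\tfrac{1}{2D}$) plus a multiple of $\|\boldsymbol{u}_2\|_{L^4}^{p}$ with $p=\tfrac{4}{3}$ when $d=2$ and $p=\tfrac{8}{5}$ when $d=3$; here I use that after Young the concentration and velocity factors combine into $\nrm{c}^{a}\nrm{\boldsymbol{u}}^{b}$ with $a+b=2$, hence dominated by $E(t)$.

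The main obstacle is precisely this last step: the Gagliardo--Nirenberg and Young exponents must be chosen \emph{simultaneously} to absorb the gradient and to keep $\Phi\in L^1(0,T)$, and the admissible choices differ between $d=2$ and $d=3$. Integrability of $\Phi$ is then verified by one further application of Gagliardo--Nirenberg together with $\boldsymbol{u}_2\in L^\infty(0,T;\boldsymbol{S})\cap L^2(0,T;\boldsymbol{V})$: the factor $\|\boldsymbol{u}_2\|_{L^4}^{p}$ is bounded by a constant power of $\nrm{\boldsymbol{u}_2}$ (controlled in $L^\infty(0,T)$) times $\|\boldsymbol{u}_2\|_{H^1}^{q}$ with $q<2$, so it lies in $L^{2/q}(0,T)\subset L^1(0,T)$. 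With $\Phi\in L^1(0,T)$ nonnegative and the differential inequality $\tfrac{\mathrm{d}}{\mathrm{d}t}E(t)\le 2\Phi(t)E(t)$ in hand, Gronwall's inequality gives $E(t)\le E(0)\exp\!\big(2\int_0^t\Phi\big)\le E(0)\exp\!\big(2\int_0^T\Phi\big)$, which is exactly the claimed bound; taking $c_1(0)=c_2(0)$ and $\boldsymbol{u}_1(0)=\boldsymbol{u}_2(0)$ yields uniqueness as an immediate corollary.
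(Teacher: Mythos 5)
Your proposal is correct and follows essentially the same route as the paper: difference variables $c=c_1-c_2$, $\boldsymbol{u}=\boldsymbol{u}_1-\boldsymbol{u}_2$, energy testing with $\phi=c$ and $\boldsymbol{v}=\boldsymbol{u}$, discarding the monotone Forchheimer term and the nonnegative $\mu(c_1)$-term, invoking the maximum principle $0\le c_i\le 1$ to get a Lipschitz constant for $\mu$ and to kill/bound the convection pieces (your splitting $c_1\boldsymbol{u}+c\,\boldsymbol{u}_2$ is just the mirror image of the paper's $\boldsymbol{u}_1\cdot\boldsymbol{\nabla}c+\boldsymbol{u}\cdot\boldsymbol{\nabla}c_2$), and closing with Gr\"onwall and $\Phi\in L^1(0,T)$ via $\boldsymbol{u}_2\in L^2(0,T;\boldsymbol{V})$. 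The only substantive divergence is in the viscosity cross term $\bigl((\mu(c_1)-\mu(c_2))\boldsymbol{u}_2,\boldsymbol{u}\bigr)$: the paper puts $\boldsymbol{u}$ in $L^4$ and absorbs into $\mu_e\|\boldsymbol{\nabla}\boldsymbol{u}\|_{L^2}^2$ via Sobolev, yielding the dimension-independent weight $\|\boldsymbol{u}_2\|_{L^4}^2$, whereas you put $c$ in $L^4$ and absorb into the concentration dissipation via Gagliardo--Nirenberg, which is what forces your dimension-dependent exponents $4/3$ and $8/5$ --- correct, but avoidable.
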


\begin{proof} We start with denoting $c:= c_1 - c_2$ and $\boldsymbol{u}:= \boldsymbol{u}_1 - \boldsymbol{u}_2$ and subtracting the two sets of the weak formulations of  equations corresponding to $c_i$ and ${\bf u}_{i}$ for $i=1,2$, to arrive at the following  equations for $c$   and ${\bf u}$
\begin{align}
\begin{aligned}\label{uniqueness:eq1}
 &\innerproduct{\frac{\partial c(t)}{\partial t},\phi} + \br{\boldsymbol{u_1}(t)\cdot \boldsymbol{\nabla}c_1(t) - \boldsymbol{u_2}(t)\cdot \boldsymbol{\nabla}c_2(t)}{\phi} +D \br{\boldsymbol{\nabla}c(t)}{\boldsymbol{\nabla}\phi} \\
 &\qquad = \br{\kappa\, c_1(t)(c_1(t)-1) - \kappa \,c_2(t)(c_2(t)-1)}{\phi}, \ \mbox{for all $t \in (0,T)$ and all $\phi \in H^1(\Omega)$}
 \end{aligned}
\end{align}
and 
\begin{align}\label{uniqueness:eq3}
  &\innerproduct{\frac{\partial\boldsymbol{u}(t)}{\partial t},\boldsymbol{v}} +  \br{\mu(c_1(t))\boldsymbol{u}_1(t)-\mu(c_2(t))\boldsymbol{u}_2(t)}{\boldsymbol{v}} \nonumber  +  \br{\beta|\boldsymbol{u}_1(t)|\boldsymbol{u}_1(t) -\beta|\boldsymbol{u}_2(t)|\boldsymbol{u}_2(t)}{\boldsymbol{v}} \\ & \qquad \qquad\qquad \qquad \qquad  + \mu_{e} \br{\boldsymbol{\nabla}\boldsymbol{u}(t)}{\boldsymbol{\nabla}\boldsymbol{v}} = 0,  \ \mbox{for all}\  t \in (0,T) \ \mbox{and all} \ \boldsymbol{v} \in H^1(\Omega). 
\end{align}
respectively. Substituting  $\phi = c$ in equation \eqref{uniqueness:eq1} leads to
\begin{align*}
 \frac{1}{2}\frac{\mathrm{d}}{\mathrm{d} t}\nr{c(t)} + \br{\boldsymbol{u_1}(t)\cdot \boldsymbol{\nabla}c_1(t) - \boldsymbol{u_2}(t)\cdot \boldsymbol{\nabla}c_2(t)}{c(t)} +D \nr{\boldsymbol{\nabla}c(t)}  = \br{\kappa c_1(t)(c_1(t)-1)-\kappa c_2(t)(c_2(t)-1)}{c(t)}.
\end{align*}
Rearranging the terms of the above equation, we arrive at 
\begin{align*}
  \frac{1}{2}\frac{\mathrm{d}}{\mathrm{d} t}\nr{c(t)} +   \br{\boldsymbol{u_1}(t)\cdot \boldsymbol{\nabla}c(t)}{c(t)} + \br{\boldsymbol{u}(t)\cdot \boldsymbol{\nabla}c_2(t)}{c(t)} +D \nr{\boldsymbol{\nabla}c(t)} =  \br{\kappa\,(c_1(t) + c_2(t) -1)c(t)}{c(t)}.
\end{align*}
The second term on the left-hand side vanishes upon applying the divergence theorem, the boundary conditions \eqref{boundary conditions}, and the incompressibility constraint \eqref{model1}. Subsequently, applying Cauchy–Schwarz and Young's inequalities, we obtain
\begin{align*}
\frac{1}{2}\frac{\mathrm{d}}{\mathrm{d} t}\nr{c(t)} + D \nr{\boldsymbol{\nabla}c(t)}  \leq \epsilon \nr{\boldsymbol{\nabla}c(t)}  + M_{\epsilon} \norm{c_2(t)}^2  \nr{\boldsymbol{u}(t)}  + \kappa_2 \norm{c_1(t) + c_2(t) -1}\nr{c(t)}.
\end{align*}
Now, utilizing the bounds $0 \leq c_1, c_2 \leq 1$, the above inequality reduces to
\begin{align}\label{uniqueness:eq2}
\frac{1}{2}\frac{\mathrm{d}}{\mathrm{d} t}\nr{c(t)} + (D-\epsilon) \nr{\boldsymbol{\nabla}c(t)}  \leq  M_{\epsilon}   \nr{\boldsymbol{u}(t)}  + \kappa_2 \nr{c(t)}.
\end{align}
Now substituting  $\boldsymbol{v} = \boldsymbol{u}$ in equation \eqref{uniqueness:eq2} and rearranging the terms, we obtain
\begin{align}\label{velocity uniqueness}
  &\frac{1}{2}\frac{\mathrm{d}}{\mathrm{d} t} \nr{\boldsymbol{u}(t)} + \br{\mu(c_1(t))\boldsymbol{u}(t)}{\boldsymbol{u}(t)}  + \mu_{e}\nr{\boldsymbol{\nabla}\boldsymbol{u}(t)}\nonumber \\ & \qquad +    \br{\beta(|\boldsymbol{u}_1(t)|\boldsymbol{u}_1(t)-|\boldsymbol{u}_2(t)|\boldsymbol{u}_2(t))}{\boldsymbol{u}(t)} + \br{(\mu(c_1(t))-\mu(c_2(t)))\boldsymbol{u}_2(t)}{\boldsymbol{u}(t)}=0.
\end{align}
Utilizing the monotonicity of the Forchheimer term and the non-negative preserving property of $\mu$, we get 
\begin{align*}
 \mu(c_1) \ge 0 \quad \text{and} \quad \big(|\boldsymbol{u}_1|\boldsymbol{u}_1 - |\boldsymbol{u}_2|\boldsymbol{u}_2\big) \cdot (\boldsymbol{u}_1 - \boldsymbol{u}_2) \geq 0, \quad \text{a.e. in } (0,T) \times \Omega.   
\end{align*}
 Dropping the non-negative terms from \eqref{velocity uniqueness} and applying the Lipschitz continuity of $\mu$ along with Cauchy–Schwarz inequality to the last term, we obtain
\begin{align*}
 \frac{1}{2}\frac{\mathrm{d}}{\mathrm{d} t} \nr{\boldsymbol{u}(t)} + \mu_{e}\nr{\boldsymbol{\nabla}\boldsymbol{u}(t)} & \leq M \nrm{c(t)} \|\boldsymbol{u}_2(t)\|_{L^4} \|\boldsymbol{u}(t)\|_{L^4}.
\end{align*}
Finally applying the Sobolev inequality followed by Young's inequality, we get 
\begin{align}\label{uniqueness:eq4}
 \frac{1}{2}\frac{\mathrm{d}}{\mathrm{d} t} \nr{\boldsymbol{u}(t)} + (\mu_{e} - \epsilon) \nr{\boldsymbol{\nabla}\boldsymbol{u}(t)}  \leq  M_{\epsilon} \|\boldsymbol{u}_2(t)\|_{L^4}^2 \nr{c(t)}.
 \end{align}
Adding equations \eqref{uniqueness:eq2} and \eqref{uniqueness:eq4}, we obtain 
\begin{align*}
    \frac{1}{2}\frac{\mathrm{d}}{\mathrm{d} t}\left( \nr{c(t)} + \nr{\boldsymbol{u}(t)} \right) + (D - \epsilon) \nr{\boldsymbol{\nabla}c(t)} + (\mu_e - \epsilon) \nr{\boldsymbol{\nabla}\boldsymbol{u}(t)}\leq \Phi(t) \left( \nr{c(t)} + \nr{\boldsymbol{u}(t)} \right),
\end{align*}
where $\Phi(t) =\left(M_\epsilon + \kappa_2 + M_{\epsilon} \|\boldsymbol{u}_2\|_{L^4}^2  \right)$.  We choose $\epsilon > 0$ satisfying $\epsilon < \min\left\{\frac{D}{2}, \frac{\mu_e}{2}\right\}$. Consequently, neglecting the non-negative terms and applying the Gr\"onwall's inequality, we arrive at
\begin{align*}
       \nr{c_1(t) - c_2(t)} + \nr{\boldsymbol{u}_1(t) - \boldsymbol{u}_2(t)} \leq \left( \nr{c_1(0) - c_2(0)} + \nr{\boldsymbol{u}_1(0) - \boldsymbol{u}_2(0)} \right) \exp{\left(2 \int_{0}^{T} \Phi(\tau) d \tau\right)},
\end{align*}
for all $t \in [0,T]$. This completes the proof for the continuous dependence of solutions $(c,{\bf u})$ on initial data. 
\end{proof}
\begin{thm}[Uniqueness]\label{uniqueness}
Let $(c_1, \boldsymbol{u}_1)$ and $(c_2, \boldsymbol{u}_2)$ be two solutions to the IBVP \eqref{model1}-\eqref{assumption:beta-k} subject to the same initial data. Then we have $   c_1(t,x) = c_2(t,x) \text{ and } \boldsymbol{u}_1(t,x) = \boldsymbol{u}_2(t,x)$, for almost every $(t,x) \in (0,T) \times \Omega$.
\end{thm}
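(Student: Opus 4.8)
The plan is to obtain uniqueness as an immediate consequence of the continuous dependence estimate established in Theorem~\ref{Thm:continous dependence}. Since the two weak solutions $(c_1,\boldsymbol{u}_1)$ and $(c_2,\boldsymbol{u}_2)$ share the same initial data, we have $c_1(0,\boldsymbol{x}) = c_2(0,\boldsymbol{x})$ and $\boldsymbol{u}_1(0,\boldsymbol{x}) = \boldsymbol{u}_2(0,\boldsymbol{x})$ for almost every $\boldsymbol{x}\in\Omega$, so that $\nr{c_1(0)-c_2(0)} = 0$ and $\nr{\boldsymbol{u}_1(0)-\boldsymbol{u}_2(0)} = 0$. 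Substituting these into the continuous dependence inequality collapses its right-hand side to zero.

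Next, I would confirm that the exponential prefactor is finite, i.e. that $\Phi\in L^1(0,T)$. Recalling that $\Phi(t) = M_\epsilon + \kappa_2 + M_\epsilon\|\boldsymbol{u}_2(t)\|_{L^4}^2$, the only non-constant contribution is $\|\boldsymbol{u}_2\|_{L^4}^2$; since $\boldsymbol{u}_2\in L^2(0,T;\boldsymbol{V})\cap L^\infty(0,T;\boldsymbol{S})$, the Gagliardo--Nirenberg inequality of Theorem~\ref{gagliardo} yields $\|\boldsymbol{u}_2\|_{L^4}^2\in L^1(0,T)$, whence $\Phi\in L^1(0,T)$ and the factor $\exp\!\big(2\int_0^T\Phi(\tau)\,d\tau\big)$ is a finite constant.

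Combining these two observations, the continuous dependence estimate gives, for every $t\in[0,T]$,
$$\nr{c_1(t)-c_2(t)} + \nr{\boldsymbol{u}_1(t)-\boldsymbol{u}_2(t)} \le 0.$$
As the left-hand side is a sum of two non-negative quantities bounded above by zero, both must vanish identically, so that $\nrm{c_1(t)-c_2(t)} = 0$ and $\nrm{\boldsymbol{u}_1(t)-\boldsymbol{u}_2(t)} = 0$ for all $t\in[0,T]$. This forces $c_1(t)=c_2(t)$ and $\boldsymbol{u}_1(t)=\boldsymbol{u}_2(t)$ in $L^2(\Omega)$ for every $t$, and hence $c_1=c_2$ and $\boldsymbol{u}_1=\boldsymbol{u}_2$ for almost every $(t,\boldsymbol{x})\in(0,T)\times\Omega$, which is the desired claim.

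There is essentially no analytical obstacle here: all of the substantive work---the energy estimates for the difference of solutions, the treatment of the nonlinear viscosity and Forchheimer terms via monotonicity and local Lipschitz continuity, and the Gr\"onwall argument---was already carried out in the proof of Theorem~\ref{Thm:continous dependence}. The only point requiring minor care is the integrability of $\Phi$, which rests on the a priori regularity $\boldsymbol{u}_2\in L^2(0,T;\boldsymbol{V})$ guaranteed by the weak-solution framework of Definition~\ref{def: weak solution}.
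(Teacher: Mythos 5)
Your proposal is correct and takes essentially the same route as the paper: the paper's proof likewise deduces uniqueness by inserting identical initial data into the continuous dependence estimate of Theorem~\ref{Thm:continous dependence}, so that the right-hand side vanishes and $\nrm{c_1(t)-c_2(t)}+\nrm{\boldsymbol{u}_1(t)-\boldsymbol{u}_2(t)}=0$ for all $t\in[0,T]$. Your extra verification that $\Phi\in L^1(0,T)$ via the Gagliardo--Nirenberg inequality is a sensible refinement (the paper simply asserts $\Phi\in L^1(0,T)$ in the statement of Theorem~\ref{Thm:continous dependence}) but does not alter the argument.
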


\begin{proof}
Let $(c_1, \boldsymbol{u}_1)$ and $(c_2, \boldsymbol{u}_2)$ be two solutions to the system \eqref{model1}--\eqref{assumption:beta-k}.  Then using $c_1(0)=c_2(0)$ and ${\bf u}_1(0)={\bf u}_2(0)$ in  the continuous dependence Theorem \ref{Thm:continous dependence}, we have  
\begin{align*}
\big\|c_1(t) - c_2(t)\big\|_{L^2} + \big\|\boldsymbol{u}_1(t) - \boldsymbol{u}_2(t)\big\|_{L^2}= 0, \quad \mbox{for all} \  t \in [0,T].
\end{align*}
This immediately gives that $c_1(t,x) = c_2(t,x)$, and $\boldsymbol{u}_1(t,x) = \boldsymbol{u}_2(t,x)$ for almost every $(t,x)\in (0,T) \times \Omega$, concluding the proof of uniqueness. 
\end{proof}
\section{Existence, Uniqueness, and Continuous Dependence of Strong Solutions}\label{sec: strong solution}
\begin{thm} If  $(c_0,\boldsymbol{u}_0) \in H^1(\Omega)\times \boldsymbol{V}$ is such that   \( 0 \leq c_0  \leq 1 \), almost everywhere in \( \Omega \), then there exists a strong solution
for the IBVP ~\eqref{model1}--\eqref{assumption:beta-k}.     
\end{thm}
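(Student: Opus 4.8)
The plan is to promote the weak solution of Theorem~\ref{thm:existence} to a strong one by returning to the Faedo--Galerkin approximations $(c_n,\boldsymbol{u}_n)$ and extracting a second layer of a priori estimates, now controlling $\boldsymbol{\nabla}\boldsymbol{u}_n$, $\boldsymbol{\nabla}c_n$ and the time derivatives in $L^2(\Omega)$, uniformly in $n$. I would choose the Galerkin bases specifically as the eigenfunctions of the Stokes operator $A$ for $\{\boldsymbol{z}_i\}$ and of the Neumann Laplacian for $\{w_i\}$, so that the projection $P_n$ commutes with $A$ (resp.\ $-\Delta$) and so that $A\boldsymbol{u}_n\in\boldsymbol{V}_n$ and $\Delta c_n\in W_n$ remain admissible test directions. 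A pleasant structural feature is that the velocity estimate decouples from the concentration: since Lemma~\ref{max lemma} gives $0\le c_n\le 1$, the viscosity $\mu(c_n)$ is bounded by a constant $\mu_{\max}$ irrespective of finer information on $c_n$, so I would carry out the velocity estimate first and feed its output into the concentration estimate.

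For the velocity I would test \eqref{finite weak 1} with $\partial_t\boldsymbol{u}_n$. This produces $\nr{\partial_t\boldsymbol{u}_n}$ and $\tfrac{\mu_e}{2}\tfrac{\mathrm d}{\mathrm dt}\nr{\boldsymbol{\nabla}\boldsymbol{u}_n}$, while the Forchheimer term becomes the perfect time derivative $\br{\beta|\boldsymbol{u}_n|\boldsymbol{u}_n}{\partial_t\boldsymbol{u}_n}=\tfrac13\tfrac{\mathrm d}{\mathrm dt}\int_\Omega\beta|\boldsymbol{u}_n|^3\,\mathrm dx$, whose integrand is non-negative because $\beta\ge 0$; it therefore moves harmlessly to the left-hand side without any differentiation of $\beta$. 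The terms $\br{\tfrac{\mu(c_n)}{K}\boldsymbol{u}_n}{\partial_t\boldsymbol{u}_n}$ and $\br{\boldsymbol{f}}{\partial_t\boldsymbol{u}_n}$ are absorbed by Young's inequality using $\mu(c_n)\le\mu_{\max}$, $\boldsymbol{f}\in L^2(0,T;\boldsymbol{L}^2)$, and the first-level bound $\boldsymbol{u}_n\in L^\infty(0,T;\boldsymbol{S})$ from Lemma~\ref{lemma1}. Integrating in time and using $\boldsymbol{u}_0\in\boldsymbol{V}$ (so $\nr{\boldsymbol{\nabla}\boldsymbol{u}_0}$ and $\int_\Omega\beta|\boldsymbol{u}_0|^3$ are finite) yields, with no Gr\"onwall and no superlinear term, uniform bounds for $\{\boldsymbol{u}_n\}$ in $L^\infty(0,T;\boldsymbol{V})$ and $\{\partial_t\boldsymbol{u}_n\}$ in $L^2(0,T;\boldsymbol{S})$. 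The $H^2$ regularity I would then read off algebraically from the Galerkin identity itself: with the Stokes eigenbasis it reads $\mu_e A\boldsymbol{u}_n=P_n\big(\boldsymbol{f}-\partial_t\boldsymbol{u}_n-\tfrac{\mu(c_n)}{K}\boldsymbol{u}_n-\beta|\boldsymbol{u}_n|\boldsymbol{u}_n\big)$, and since $P_n$ contracts the $L^2$ norm, $\mu_e\nrm{A\boldsymbol{u}_n}$ is bounded by the $L^2$ norms of the data. Here $\beta|\boldsymbol{u}_n|\boldsymbol{u}_n$ enters only as a datum: $\nrm{\beta|\boldsymbol{u}_n|\boldsymbol{u}_n}\le\beta_2\|\boldsymbol{u}_n\|_{L^4}^2\le M\|\boldsymbol{u}_n\|_{H^1}^2$ is finite by the embedding $\boldsymbol{H}^1\hookrightarrow\boldsymbol{L}^4$ and the $L^\infty(0,T;\boldsymbol{V})$ bound just obtained, so $\{\boldsymbol{u}_n\}$ is uniformly bounded in $L^2(0,T;\boldsymbol{V}\cap\boldsymbol{H}^2)$.

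For the concentration I would test \eqref{finite weak 2} with $\partial_t c_n$. The diffusion term gives $\tfrac D2\tfrac{\mathrm d}{\mathrm dt}\nr{\boldsymbol{\nabla}c_n}$ and $\nr{\partial_t c_n}$ appears on the left; the reaction term $\kappa c_n(c_n-1)$ is uniformly bounded in $L^\infty$ by $0\le c_n\le1$ and is therefore harmless. The convective term is the one needing the velocity regularity: I would estimate $|\br{\boldsymbol{u}_n\cdot\boldsymbol{\nabla}c_n}{\partial_t c_n}|\le\norm{\boldsymbol{u}_n}\,\nrm{\boldsymbol{\nabla}c_n}\,\nrm{\partial_t c_n}$ and invoke the embedding $\boldsymbol{H}^2(\Omega)\hookrightarrow\boldsymbol{L}^\infty(\Omega)$, valid for $d\le 3$, so that $\boldsymbol{u}_n\in L^2(0,T;\boldsymbol{L}^\infty)$ with a uniform bound. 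After absorbing $\tfrac12\nr{\partial_t c_n}$, a Gr\"onwall argument with integrating factor $\int_0^T\norm{\boldsymbol{u}_n(t)}^2\,\mathrm dt$ (finite by the previous step) and the datum $c_0\in H^1$ yields uniform bounds for $\{c_n\}$ in $L^\infty(0,T;H^1)$ and $\{\partial_t c_n\}$ in $L^2(0,T;L^2)$. As before, $H^2$ follows algebraically: $-D\Delta c_n=P_n\big(-\partial_t c_n-\boldsymbol{u}_n\cdot\boldsymbol{\nabla}c_n+\kappa c_n(c_n-1)\big)$ has right-hand side uniformly bounded in $L^2(0,T;L^2)$, giving $\{c_n\}$ uniformly bounded in $L^2(0,T;H^2)$.

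Finally I would pass to the limit $n\to\infty$. The new uniform bounds furnish weak and weak-$\ast$ limits in precisely the strong-solution spaces of Definition~\ref{def: weak solution}, while Aubin--Lions (Theorem~\ref{aubin-lions}) gives strong convergence of $c_n$ in $L^2(0,T;H^1)$ and of $\boldsymbol{u}_n$ in $L^2(0,T;\boldsymbol{V})$, which is enough to identify the nonlinearities $\mu(c)\boldsymbol{u}$, $\beta|\boldsymbol{u}|\boldsymbol{u}$, $c\boldsymbol{u}$ and $\kappa c(c-1)$ exactly as in Lemma~\ref{lemma3}; uniqueness and continuous dependence then come for free from Theorems~\ref{Thm:continous dependence}--\ref{uniqueness}. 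I expect the main obstacle to be the higher-order treatment of the Forchheimer term under the weak hypothesis $\beta\in L^\infty$ with $\beta\ge0$: a direct $-\Delta\boldsymbol{u}_n$ test would, after integration by parts, generate an uncontrollable $\boldsymbol{\nabla}\beta$ contribution, while testing with $A\boldsymbol{u}_n$ and using Cauchy--Schwarz on the Forchheimer term would introduce a superlinear quantity ($\sim\|\boldsymbol{u}_n\|_{L^4}^4$) and hence only a local-in-time bound. The resolution, as above, is to route all $H^2$ information through the time-derivative test---where $\beta\ge0$ turns the Forchheimer contribution into the monotone quantity $\int_\Omega\beta|\boldsymbol{u}_n|^3$---and to recover $\boldsymbol{u}\in\boldsymbol{H}^2$ only afterwards from the equation, where $\beta|\boldsymbol{u}|\boldsymbol{u}$ is needed merely in $L^2$; the one-directional coupling (velocity $\to$ concentration through $\norm{\boldsymbol{u}}$ in the convection term) is the secondary point requiring the two estimates to be ordered correctly.
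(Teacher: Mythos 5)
Your overall architecture (test with $\partial_t\boldsymbol{u}_n$ and $\partial_t c_n$, then recover $H^2$ algebraically from the equation via the spectral bases) is a legitimate alternative to the paper's route, which instead tests the Galerkin equations with $-\Delta\boldsymbol{u}_{\ell,n}$ and $-\Delta c_{\ell,n}$, obtains a differential inequality with superlinear terms ($\|\boldsymbol{\nabla}\boldsymbol{u}_{\ell,n}\|_{L^2}^4$, $\|\boldsymbol{\nabla}c_{\ell,n}\|_{L^2}^4$), and therefore gets only a local time $T^*$ from Gr\"onwall before invoking the maximum principle and a continuation argument. Your velocity estimate is genuinely attractive: the identity $\br{\beta|\boldsymbol{u}_n|\boldsymbol{u}_n}{\partial_t\boldsymbol{u}_n}=\tfrac13\tfrac{\mathrm{d}}{\mathrm{d}t}\int_\Omega\beta|\boldsymbol{u}_n|^3\,\mathrm{d}x$ is correct and, granted a bounded viscosity, would yield a global-in-time $L^\infty(0,T;\boldsymbol{V})$ bound with no superlinearity at all. (Incidentally, your worry about an uncontrollable $\boldsymbol{\nabla}\beta$ contribution in the $-\Delta\boldsymbol{u}_n$ test is unfounded: the paper never integrates by parts on the Forchheimer term; it estimates $\br{\beta|\boldsymbol{u}_{\ell,n}|\boldsymbol{u}_{\ell,n}}{\Delta\boldsymbol{u}_{\ell,n}}$ directly by Cauchy--Schwarz, which is exactly the ``superlinear'' alternative you describe.)

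The genuine gap is your repeated use of $0\le c_n\le 1$ \emph{at the Galerkin level}. Lemma~\ref{max lemma} is proved for the weak solution $(c_\ell,\boldsymbol{u}_\ell)$ of the truncated system by testing \eqref{weak2} with $\min\{0,c_\ell\}$ and with cutoff constructions; such test functions do not lie in the finite-dimensional space $W_n$, so no maximum principle is available for $c_n$, and the $H^1$ bound of Lemma~\ref{lemma1} gives no $L^\infty$ control in either dimension since $H^1(\Omega)\not\hookrightarrow L^\infty(\Omega)$ for $d\ge 2$. This invalidates two load-bearing steps. First, the bound $\mu(c_n)\le\mu_{\max}$ in your velocity estimate: $\mu$ is only locally Lipschitz, so $\mu(c_n)$ is a priori unbounded --- this is precisely why the paper introduces the truncated viscosity $\widetilde{\mu}_\ell$, which your proposal quietly discards but in fact needs; with $\widetilde{\mu}_\ell\le\mu_\ell$ your velocity estimate goes through verbatim, at the price of restoring the de-truncation limit $\ell\to\infty$ that you omit. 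Second, the claim that the reaction term is ``harmless'' in the $\partial_t c_n$ test: without $|c_n|\le 1$ you must estimate $\|\kappa\,c_n(c_n-1)\|_{L^2}\le \kappa_2\big(\|c_n\|_{L^4}^2+\|c_n\|_{L^2}\big)$, and by Theorem~\ref{gagliardo} in $d=3$ this reinjects $\|c_n\|_{H^1}^3$ into the right-hand side, so your concentration Gr\"onwall closes only locally in time after all (in $d=2$ it happens to stay linear, but the theorem covers $d=3$). The repair is exactly the scaffolding the paper built: run your estimates for the truncated system on a local interval $[0,T^*]$, pass to the limit $n\to\infty$, apply Lemma~\ref{max lemma} to the limiting solution, extend to $(0,T)$ as in Lemma~\ref{Global existence for truncated}, and only then let $\ell\to\infty$ as in the proof of Theorem~\ref{thm:existence}. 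With those pieces restored (and noting that the $H^2$ Stokes/elliptic recovery needs a $C^{1,1}$ rather than merely $C^1$ boundary, a hypothesis the paper itself elides in its own $-\Delta$ tests), your argument becomes correct, and its velocity component is cleaner than the paper's.
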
    
\begin{proof}
For a fixed $\ell\in\mathbb{N}$,  take  $\{(c_{\ell, n}, \boldsymbol{u}_{\ell, n})\}_{n\in \mathbb{N}}\subset H^2(\Omega)\times\left(\boldsymbol{V}\cap \boldsymbol{H}^2(\Omega)\right)$ as the Galerkin approximation sequence for the truncated problem \eqref{model1}--\eqref{assumption:beta-k} as is done in section \ref{sec: Existence}. Using Lemmas \ref{lemma1} and \ref{lemma2}, we get that  $\{c_{\ell, n}\}_{n\in\mathbb{N}}$ and $\{\boldsymbol{u}_{\ell, n}\}_{n\in\mathbb{N}}$ are uniformly bounded in $L^2(0,T;H^1(\Omega))$ and $L^2(0,T;\boldsymbol{V})$ respectively. To derive higher regularity estimates for the velocity field, we substitute $\boldsymbol{v} = - \Delta \boldsymbol{u}_{\ell, n}$ into equation \eqref{finite weak 1}, which after using the integration by parts gives us
\begin{align*}
  \frac{1}{2} \frac{\mathrm{d}}{\mathrm{d} t} \nr{\boldsymbol{\nabla}\boldsymbol{u}_{\ell, n}(t)} + \mu_{e}\nr{\Delta \boldsymbol{u}_{\ell, n}(t)} = \br{\frac{\widetilde{\mu}_{\ell}(c_{\ell, n}(t))}{K}\boldsymbol{u}_{\ell, n}(t)}{\Delta \boldsymbol{u}_{\ell, n}(t)} +  \br{\beta |\boldsymbol{u}_{\ell, n}(t)|\boldsymbol{u}_{\ell, n}(t)}{\Delta \boldsymbol{u}_{\ell, n}(t)}   - \br{\boldsymbol{f}(t)}{\Delta \boldsymbol{u}_{\ell, n}(t)},
\end{align*}
for almost every $t \in (0, T).$ Using the bounds on $\widetilde{\mu}_{\ell}$ and $\beta$, together with Cauchy–Schwarz inequality, the above equation reduces to
\begin{align*}
\frac{1}{2} \frac{\mathrm{d}}{\mathrm{d} t} \nr{\boldsymbol{\nabla}\boldsymbol{u}_{\ell, n}(t)}   + \mu_{e}\nr{\Delta \boldsymbol{u}_{\ell, n}(t)} \leq   \frac{\mu_{\ell}}{K}  \nrm{\boldsymbol{u}_{\ell, n}(t)} \nrm{\Delta \boldsymbol{u}_{\ell, n}(t)} + \beta_2 \|\boldsymbol{u}_{\ell, n}(t)\|_{L^4}^2 \nrm{\Delta \boldsymbol{u}_{\ell, n}(t)} + \nrm{\boldsymbol{f}(t)}\nrm{\Delta \boldsymbol{u}_{\ell, n}(t)}.
\end{align*}
An application of Young's inequality gives
\begin{align*}
\frac{1}{2} \frac{\mathrm{d}}{\mathrm{d} t} \nr{\boldsymbol{\nabla}\boldsymbol{u}_{\ell, n}(t)} + \mu_{e}\,\nr{\Delta \boldsymbol{u}_{\ell, n}(t)} 
 &\leq 3 \epsilon \nr{\Delta \boldsymbol{u}_{\ell, n}(t)} 
   + M_1(\epsilon) \frac{\mu_{\ell}^2}{K^2}  \nr{\boldsymbol{u}_{\ell, n}(t)}   
   +  M_2(\epsilon) \beta_2^2 \|\boldsymbol{u}_{\ell, n}(t)\|_{L^4}^4 + M_3(\epsilon) \nr{\boldsymbol{f}(t)} .
\end{align*}
Defining $M_{\epsilon}:= \max\left\{ M_1(\epsilon) \frac{\mu_{\ell}^2}{K^2}, M_2(\epsilon) \beta_2^2, M_3(\epsilon) \right\}$. Now after applying the Gagliardo-Nirenberg and Poincar\'e inequalities, the above inequality reduces to 
\begin{align}\label{strong:eq1}
\frac{1}{2} \frac{\mathrm{d}}{\mathrm{d} t} \nr{\boldsymbol{\nabla}\boldsymbol{u}_{\ell, n}(t)} + (\mu_{e} - 3 \epsilon)\,\nr{\Delta \boldsymbol{u}_{\ell, n}(t)} 
 &\leq  M_{\epsilon}\left( \nr{\boldsymbol{f}(t)} +  \nr{\boldsymbol{u}_{\ell, n}(t)}   
   +   \|\boldsymbol{\nabla}\boldsymbol{u}_{\ell, n}(t)\|_{L^2}^4   \right).
\end{align}
Now to obtain higher regularity estimate for concentration,  substitute $\phi(t) = -\Delta c_{\ell, n}(t)$ in equation \eqref{finite weak 2} and use the integration by parts, to obtain
\begin{align*}
\frac{1}{2} \frac{\mathrm{d}}{\mathrm{d} t}\nr{\boldsymbol{\nabla}c_{\ell, n}(t)} + D \nr{\Delta c_{\ell, n}(t)} =  \br{\boldsymbol{u}_{\ell, n}(t)\cdot \boldsymbol{\nabla}c_{\ell, n}(t)}{\Delta c_{\ell, n}(t)} -  \br{\kappa \,c_{\ell, n}^2(t)}{\Delta c_{\ell, n}(t)}   +  \br{\kappa \,c_{\ell, n}(t)}{\Delta c_{\ell, n}(t)}.
\end{align*}
Applying Cauchy–Schwarz inequality, we get 
\begin{align*}
 \frac{1}{2} \frac{\mathrm{d}}{\mathrm{d} t}\nr{\boldsymbol{\nabla}c_{\ell, n}(t)} &+ D \nr{\Delta c_{\ell, n}(t)} \leq \|\boldsymbol{u}_{\ell, n}(t)\|_{L^4} \|\boldsymbol{\nabla} c_{\ell, n}(t) \|_{L^4} \nrm{\Delta c_{\ell, n}(t)}\\
 & \ \ + \kappa_2 \|c_{\ell, n}(t)\|_{L^{4}}^{2}\nrm{\Delta c_{\ell, n}(t)} +  \kappa_2 \nrm{c_{\ell, n}(t)}\nrm{\Delta c_{\ell, n}(t)}.
\end{align*}
After applying the Gagliardo-Nirenberg's inequality \ref{gagliardo} and the Sobolev inequality, we find that
\begin{align*}
 \frac{1}{2} \frac{\mathrm{d}}{\mathrm{d} t}\nr{\boldsymbol{\nabla}c_{\ell, n}(t)} + D \nr{\Delta c_{\ell, n}(t)}&\leq M_1\|\boldsymbol{\nabla}\boldsymbol{u}_{\ell, n}(t)\|_{L^2} \|\boldsymbol{\nabla} c_{\ell, n}(t) \|_{L^2}^{(4-d)/4}  \nrm{\Delta c_{\ell, n}(t)}^{(4+d)/4}\\
 &\quad + M_2 \kappa_2 \left(\nr{c_{\ell, n}(t)} +\|\boldsymbol{\nabla}c_{\ell, n}(t)\|_{L^2}^{2} \right)\nrm{\Delta c_{\ell, n}(t)} +  \kappa_2 \nrm{c(t)}\nrm{\Delta c_{\ell, n}(t)}.
\end{align*}
Finally, using the Young's inequality, we get 
\begin{align*}
\begin{aligned}
   \frac{1}{2} \frac{\mathrm{d}}{\mathrm{d} t}\nr{\boldsymbol{\nabla}c_{\ell, n}(t)} + (D - 3 \epsilon)\nr{\Delta c_{\ell, n}(t)}  &\leq   M_{1}(\epsilon)\Big(\|\boldsymbol{\nabla}\boldsymbol{u}_{\ell, n}(t)\|_{L^2}^{16/(4-d)} + \nrm{\boldsymbol{\nabla}c_{\ell, n}(t)}^4 \Big) \\
   &\quad + M_{2} (\epsilon) \kappa_2^{2} \left(\nrm{c_{\ell, n}(t)}^4 + \nrm{\boldsymbol{\nabla}c_{\ell, n}(t)}^4  \right) + M_{3} (\epsilon) \kappa_2^2\nr{c_{\ell, n}(t)}.
   \end{aligned} 
\end{align*}
Defining $M_{\epsilon }:= \max\{ M_{1} (\epsilon), M_{2} (\epsilon) \kappa_2^2, M_{3} (\epsilon) \kappa_2^2 \}$, we can rewrite the previous inequality as
\begin{align}\label{strong:eq2}
   &\frac{1}{2} \frac{\mathrm{d}}{\mathrm{d} t}\nr{\boldsymbol{\nabla}c_{\ell, n}(t)} + (D - 3 \epsilon) \nr{\Delta c_{\ell, n}(t)}\nonumber \\ & \qquad \leq   M_{\epsilon} \left( \nr{c_{\ell, n}(t)} + \nrm{c_{\ell, n}(t)}^4 + \nrm{\boldsymbol{\nabla}c_{\ell, n}(t)}^4    + \|\boldsymbol{\nabla}\boldsymbol{u}_{\ell, n}(t)\|_{L^2}^{16/(4-d)} + \nrm{\boldsymbol{\nabla}c_{\ell, n}(t)}^4 \right).
\end{align}
Adding the inequalities \eqref{strong:eq1} and \eqref{strong:eq2}, we get 
\begin{align}\label{strong:eq3}
\begin{aligned}
    &\frac{1}{2} \frac{\mathrm{d}}{\mathrm{d} t}\left(\nr{\boldsymbol{\nabla}c_{\ell, n}(t)} + \nr{\boldsymbol{\nabla}\boldsymbol{u}_{\ell, n}(t)}  \right) + (D - 3 \epsilon) \nr{\Delta c_{\ell, n}(t)} + (\mu_{e} - 3 \epsilon)\,\nr{\Delta \boldsymbol{u}_{\ell, n}(t)} \\ &\quad \leq M_{\epsilon} \left( \nr{c_{\ell, n}(t)}    + \nrm{c_{\ell, n}(t)}^4 + 2\nrm{\boldsymbol{\nabla}c_{\ell, n}(t)}^4    + \|\boldsymbol{\nabla}\boldsymbol{u}_{\ell, n}(t)\|_{L^2}^{16/(4-d)} +  \nr{\boldsymbol{f}(t)} +  \nr{\boldsymbol{u}_{\ell, n}(t)}   
   +   \|\boldsymbol{\nabla}\boldsymbol{u}_{\ell, n}(t)\|_{L^2}^4 \right).
   \end{aligned}
\end{align}
Choosing    $\epsilon < \min\left\{\frac{\mu_e}{3}, \frac{D}{3}\right\}$ along with dropping the non-negative terms in the left hand of above inequality and an application of  Gr\"onwall's lemma,  implies that there exists a $T^{\ast} \in (0,T)$, such that the sequences $\{c_{\ell, n}\}_{n \in \mathbb{N}}$ and $\{\boldsymbol{u}_{\ell, n}\}_{n \in \mathbb{N}}$ are uniformly bounded in $L^{\infty}(0, T^{\ast}; H^1(\Omega)) \cap L^2(0, T^{\ast}; H^2(\Omega))$ and $L^{\infty}(0, T^{\ast};\boldsymbol{V}) \cap L^2(0, T^{\ast};\boldsymbol{V} \cap \boldsymbol{H}^2(\Omega))$, respectively. Next using these estimates together with the triangle inequality in equations \eqref{model2} and \eqref{model3}, we get the sequences  $\left\{\frac{\partial c_{\ell, n}}{\partial t}\right\}_{n \in \mathbb{N}}$ and $\left\{\frac{\partial \boldsymbol{u}_{\ell, n}}{\partial t}\right\}_{n \in \mathbb{N}}$ are uniformly bounded in $L^2(0, T^{\ast}; L^2(\Omega))$ and $L^2(0, T^{\ast}; \boldsymbol{S})$, respectively. Following the arguments used in Lemma \ref{lemma3}, we pass to the limit as $n \to \infty$ to establish that the limit pair $(c_{\ell}, \boldsymbol{u}_{\ell})$ belongs to $L^2(0,T^*;H^2(\Omega)) \times L^2(0,T^*;\boldsymbol{V}\cap \boldsymbol{H}^2(\Omega))$ and constitutes a local strong solution to the truncated IBVP \eqref{model1}--\eqref{assumption:beta-k}. Next, invoking Lemma \ref{max lemma} and employing arguments analogous to those used in the proof of Lemma \ref{Global existence for truncated}, we extend this local solution globally to the interval $(0,T)$. Finally, adapting the arguments from proof of the Theorem \ref{thm:existence}, we conclude that the limit $(c,\boldsymbol{u})$ of the sequence $\{(c_\ell, \boldsymbol{u}_\ell)\}_{\ell \in \mathbb{N}}$ is a strong solution to the original IBVP \eqref{model1}--\ref{assumption:beta-k}.
\end{proof}


\begin{thm}[Continuous dependence on initial data]\label{Thm:continous dependence for strong solutions}
Let $(c_1,\boldsymbol{u}_1)$ and $(c_2,\boldsymbol{u}_2)$ be two strong solutions of the IBVP \eqref{model1}-\eqref{assumption:beta-k} with the initial data
$\left(c_{1}(0,\boldsymbol{x}), {\bf u}_1(0,x)\right)$ and $\left(c_{2}(0,\boldsymbol{x}), {\bf u}_2(0,x)\right)$, respectively. Then for all $t \in [0,T]$,
    \begin{align*}
       &\nr{c_1(t) - c_2(t) }+ \nr{\boldsymbol{\nabla}c_1(t)- \boldsymbol{\nabla}c_2(t)} + \nr{\boldsymbol{u}_1(t) - \boldsymbol{u}_2(t)} +\nr{\boldsymbol{\nabla}\boldsymbol{u}_1(t)-\boldsymbol{\nabla}\boldsymbol{u}_2(t)} \nonumber \\ &\  \ \leq  \Big( \nr{c_1(0) - c_2(0)}+ \nr{\boldsymbol{\nabla}c_1(0)- \boldsymbol{\nabla}c_2(0)} + \nr{\boldsymbol{u}_1(0) - \boldsymbol{u}_2(0)} +\nr{\boldsymbol{\nabla}\boldsymbol{u}_1(0)-\boldsymbol{\nabla}\boldsymbol{u}_2(0)}\Big) \exp{\left(2\int_{0}^{T} \varphi(\tau) d\tau \right)},
    \end{align*}
   for some nonnegative function $\varphi\in L^{1}(0,T)$ depending on $c_1,c_2$, ${\bf u}_{1},{\bf u}_{2}$, $\mu$ and $\kappa$. In particular, if $c_1(0)=c_2(0)$ and ${\bf u}_1(0)={\bf u}_{2}(0)$, we have \[c_1(t,x)=c_2(t,x)\ \mbox{and}\ {\bf u}_1(t,x)={\bf u}_2(t,x),\ \mbox{for}\ (t,x)\in (0,T)\times \Omega.\]
\end{thm}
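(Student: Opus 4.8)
The plan is to promote the $L^2$ stability argument of Theorem~\ref{Thm:continous dependence} to the $H^1$ level by testing the difference equations with the negative Laplacians of the differences, and then to package the resulting four estimates into a single Grönwall inequality for the energy $E(t):=\nr{c(t)}+\nr{\boldsymbol{\nabla}c(t)}+\nr{\boldsymbol{u}(t)}+\nr{\boldsymbol{\nabla}\boldsymbol{u}(t)}$, where $c:=c_1-c_2$ and $\boldsymbol{u}:=\boldsymbol{u}_1-\boldsymbol{u}_2$. As in that proof, I subtract the two strong formulations and split the nonlinearities via $\boldsymbol{u}_1\cdot\boldsymbol{\nabla}c_1-\boldsymbol{u}_2\cdot\boldsymbol{\nabla}c_2=\boldsymbol{u}_1\cdot\boldsymbol{\nabla}c+\boldsymbol{u}\cdot\boldsymbol{\nabla}c_2$, $\mu(c_1)\boldsymbol{u}_1-\mu(c_2)\boldsymbol{u}_2=\mu(c_1)\boldsymbol{u}+(\mu(c_1)-\mu(c_2))\boldsymbol{u}_2$, and $\kappa[c_1(c_1-1)-c_2(c_2-1)]=\kappa(c_1+c_2-1)c$, so that every source term is linear in $(c,\boldsymbol{u})$ with coefficients assembled from the two solutions.

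First I would reuse the two $L^2$ estimates \eqref{uniqueness:eq2} and \eqref{uniqueness:eq4} from Theorem~\ref{Thm:continous dependence}, which bound $\frac{\mathrm{d}}{\mathrm{d}t}\big(\nr{c(t)}+\nr{\boldsymbol{u}(t)}\big)$ and in which the Forchheimer contribution is discarded through the monotonicity $\big(|\boldsymbol{u}_1|\boldsymbol{u}_1-|\boldsymbol{u}_2|\boldsymbol{u}_2\big)\cdot\boldsymbol{u}\ge 0$. The new ingredients are the two $H^1$ estimates. Testing the $c$-equation with $-\Delta c$ and integrating by parts under the Neumann condition \eqref{boundary conditions} gives
\[
\tfrac{1}{2}\tfrac{\mathrm{d}}{\mathrm{d}t}\nr{\boldsymbol{\nabla}c(t)}+D\nr{\Delta c(t)}=\br{\boldsymbol{u}_1\cdot\boldsymbol{\nabla}c+\boldsymbol{u}\cdot\boldsymbol{\nabla}c_2}{\Delta c}-\br{\kappa(c_1+c_2-1)c}{\Delta c}.
\]
I would estimate the convective terms by Hölder together with the embedding $H^1\hookrightarrow L^4$ and the Gagliardo--Nirenberg inequality (Theorem~\ref{gagliardo}), then absorb every factor $\nrm{\Delta c}$ into the left-hand side by Young's inequality (Theorem~\ref{youngs inequaity}); the coefficients that survive are $\|\boldsymbol{u}_1\|_{L^\infty}^2$ and $\|\boldsymbol{\nabla}c_2\|_{L^4}^2$, while the reaction term is controlled using $0\le c_1,c_2\le 1$. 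Testing the $\boldsymbol{u}$-equation with $-\Delta\boldsymbol{u}$ yields $\tfrac{1}{2}\tfrac{\mathrm{d}}{\mathrm{d}t}\nr{\boldsymbol{\nabla}\boldsymbol{u}(t)}+\mu_e\nr{\Delta\boldsymbol{u}(t)}$ on the left and the viscosity and Forchheimer difference terms paired with $\Delta\boldsymbol{u}$ on the right; the viscosity term is treated by the uniform bound on $\mu$ over $[0,1]$ and its Lipschitz continuity, exactly as in \eqref{uniqueness:eq4} but retaining $\Delta\boldsymbol{u}$.

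The main obstacle is the Forchheimer difference term $\br{\beta(|\boldsymbol{u}_1|\boldsymbol{u}_1-|\boldsymbol{u}_2|\boldsymbol{u}_2)}{\Delta\boldsymbol{u}}$. In the $L^2$ estimates this term was simply dropped by monotonicity, but pairing against $\Delta\boldsymbol{u}$ instead of $\boldsymbol{u}$ destroys that sign structure, so it must now be estimated directly. I would use the elementary bound $\bigl||\boldsymbol{u}_1|\boldsymbol{u}_1-|\boldsymbol{u}_2|\boldsymbol{u}_2\bigr|\le M\,(|\boldsymbol{u}_1|+|\boldsymbol{u}_2|)\,|\boldsymbol{u}|$, followed by Hölder and the embedding $\boldsymbol{H}^2(\Omega)\hookrightarrow\boldsymbol{L}^\infty(\Omega)$ (valid for $d=2,3$), to obtain $\br{\beta(|\boldsymbol{u}_1|\boldsymbol{u}_1-|\boldsymbol{u}_2|\boldsymbol{u}_2)}{\Delta\boldsymbol{u}}\le M\beta_2\big(\|\boldsymbol{u}_1\|_{L^\infty}+\|\boldsymbol{u}_2\|_{L^\infty}\big)\nrm{\boldsymbol{u}}\,\nrm{\Delta\boldsymbol{u}}$, and then Young's inequality to absorb $\nrm{\Delta\boldsymbol{u}}$ and leave the coefficient $\big(\|\boldsymbol{u}_1\|_{L^\infty}+\|\boldsymbol{u}_2\|_{L^\infty}\big)^2$ multiplying $\nr{\boldsymbol{u}}$. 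The decisive point is that the strong-solution regularity $\boldsymbol{u}_i\in L^2(0,T;\boldsymbol{V}\cap\boldsymbol{H}^2(\Omega))$ renders this coefficient integrable in time; the same regularity ensures $\|\boldsymbol{u}_1\|_{L^\infty}^2$, $\|\boldsymbol{\nabla}c_2\|_{L^4}^2$, and the remaining coefficients lie in $L^1(0,T)$.

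Finally I would add the two $L^2$ and two $H^1$ estimates, choose $\epsilon$ below $\tfrac{1}{3}\min\{D,\mu_e\}$ so that the dissipative terms $\nr{\boldsymbol{\nabla}c},\nr{\Delta c},\nr{\boldsymbol{\nabla}\boldsymbol{u}},\nr{\Delta\boldsymbol{u}}$ keep nonnegative coefficients and may be discarded, and collect all surviving coefficients into a single nonnegative $\varphi(t)$ built from $\kappa_2$, the Lipschitz constant of $\mu$, and the norms $\|\boldsymbol{u}_1\|_{L^4}^2,\|\boldsymbol{u}_2\|_{L^4}^2,\|\boldsymbol{\nabla}c_2\|_{L^4}^2$ and $(\|\boldsymbol{u}_1\|_{L^\infty}+\|\boldsymbol{u}_2\|_{L^\infty})^2$, all of which lie in $L^1(0,T)$ by the argument above. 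This produces $\frac{\mathrm{d}}{\mathrm{d}t}E(t)\le 2\varphi(t)E(t)$, and Grönwall's inequality then yields the asserted estimate. The uniqueness statement follows at once by taking $c_1(0)=c_2(0)$ and $\boldsymbol{u}_1(0)=\boldsymbol{u}_2(0)$, which forces $E\equiv 0$ on $[0,T]$.
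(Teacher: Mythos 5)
Your proposal is correct and essentially reproduces the paper's proof: the same difference decompositions, reuse of the $L^2$-level estimates \eqref{uniqueness:eq2} and \eqref{uniqueness:eq4}, testing with $-\Delta c$ and $-\Delta\boldsymbol{u}$, absorption of the dissipative terms via Young's inequality with $\epsilon<\min\{D/3,\,\mu_e/4\}$, and a Gr\"onwall closure for the combined $H^1$ energy --- including the key observation that the Forchheimer monotonicity is lost against $-\Delta\boldsymbol{u}$ and the term must instead be estimated directly. The only immaterial deviation is that you control the convective and Forchheimer terms through $\boldsymbol{H}^2(\Omega)\hookrightarrow \boldsymbol{L}^\infty(\Omega)$, yielding coefficients such as $\left(\|\boldsymbol{u}_1\|_{L^\infty}+\|\boldsymbol{u}_2\|_{L^\infty}\right)^2$, whereas the paper splits the Forchheimer difference as $\beta|\boldsymbol{u}_1|\boldsymbol{u}+\beta(|\boldsymbol{u}_1|-|\boldsymbol{u}_2|)\boldsymbol{u}_2$ and stays at the $L^4$/Gagliardo--Nirenberg level; both choices give $\varphi\in L^1(0,T)$ under the strong-solution regularity, so the conclusions coincide.
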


\begin{proof}
We define the difference variables $c := c_1 - c_2$ and $\boldsymbol{u} := \boldsymbol{u}_1 - \boldsymbol{u}_2$. 
Then choosing the test function $\phi = - \Delta c$ in \eqref{uniqueness:eq1}, we obtain
\begin{align*}   
\frac{1}{2}\frac{\mathrm{d}}{\mathrm{d} t}\nr{\boldsymbol{\nabla}c(t)} - \br{\boldsymbol{u}_1(t)\cdot \boldsymbol{\nabla}c_1(t)- \boldsymbol{u}_2(t)\cdot \boldsymbol{\nabla}c_2(t)}{\Delta c(t)} + D \nr{\Delta c(t)} = - \br{\kappa(c_1(t) + c_2(t) -1)c(t)}{\Delta c(t)}.
\end{align*}
Rearranging the second term on the left-hand side of the above equation, we have
\begin{align*}
   \frac{1}{2}\frac{\mathrm{d}}{\mathrm{d} t}\nr{\boldsymbol{\nabla}c(t)}  + D \nr{\Delta c(t)} = \br{\boldsymbol{u}_1(t)\cdot \boldsymbol{\nabla}c(t)}{\Delta c(t)} + \br{\boldsymbol{u}(t)\cdot \boldsymbol{\nabla}c_2(t)}{\Delta c(t)} - \br{\kappa(c_1(t) + c_2(t) -1)c(t)}{\Delta c(t)}.
\end{align*}
Applying Cauchy–Schwarz inequality and $ 0 \leq c_1, c_2 \leq 1$ a.e. in $(0,T) \times \Omega$, we get
\begin{align*}
   \frac{1}{2}\frac{\mathrm{d}}{\mathrm{d} t}\nr{\boldsymbol{\nabla}c(t)}  + D \nr{\Delta c(t)} \leq  \|\boldsymbol{u}_1(t)\|_{L^4} \|\boldsymbol{\nabla}c(t)\|_{L^4} \nrm{\Delta c(t)} + \|\boldsymbol{\nabla}c_2(t)\|_{L^4} \|\boldsymbol{u}(t)\|_{L^4} \nrm{\Delta c(t)} + \kappa_2 \nrm{c(t)} \nrm{\Delta c(t)}.
\end{align*}
Using the Gagliardo-Nirenberg inequality \ref{gagliardo} and the Sobolev inequalities, we obtain
\begin{align*}
   \frac{1}{2}\frac{\mathrm{d}}{\mathrm{d} t}\nr{\boldsymbol{\nabla}c(t)}  + D \nr{\Delta c(t)}&\leq  M_1\|\boldsymbol{u}_1(t)\|_{L^4} \|\boldsymbol{\nabla}c(t)\|_{L^2}^{(4-d)/4} \nrm{\Delta c(t)}^{(4+d)/4} \\ & \qquad  + M_2\|\boldsymbol{\nabla}c_2(t)\|_{L^4} \|\boldsymbol{\nabla}\boldsymbol{u}(t)\|_{L^2} \nrm{\Delta c(t)} + \kappa_2 \nrm{c(t)} \nrm{\Delta c(t)}.
\end{align*}
An application of Young's inequality \ref{youngs inequaity} gives 
\begin{align}\label{strong uniqueness:eq5}
\begin{aligned}
   \frac{1}{2}\frac{\mathrm{d}}{\mathrm{d} t}\nr{\boldsymbol{\nabla}c(t)}  + (D - 3 \epsilon) \nr{\Delta c(t)} \leq  M_1(\epsilon)\|\boldsymbol{u}_1(t)\|_{L^4}^{8/(4-d)} \|\boldsymbol{\nabla}c(t)\|_{L^2}^{2}  + M_2(\epsilon)\|\boldsymbol{\nabla}c_2(t)\|_{L^4}^{2} \|\boldsymbol{\nabla}\boldsymbol{u}(t)\|_{L^2}^{2} + M_3(\epsilon) \kappa_2^2 \nr{c(t)}. 
   \end{aligned}
\end{align}
Next, choosing  $\boldsymbol{v} = -\Delta \boldsymbol{u}$, in equation \eqref{uniqueness:eq2}, we get 
\begin{align*}
    \dfrac{1}{2}\frac{\mathrm{d}}{\mathrm{d} t}\nr{\boldsymbol{\nabla}\boldsymbol{u}(t)} + \mu_e \nr{\Delta \boldsymbol{u}(t)} = \Big((\mu(c_1(t)) \boldsymbol{u}_1(t) - \mu(c_2(t)) \boldsymbol{u}_2(t)), \Delta \boldsymbol{u}(t) \Big) + \Big(\beta(|\boldsymbol{u}_1(t)|\boldsymbol{u}_1(t) - |\boldsymbol{u}_2(t)|\boldsymbol{u}_2(t)) , \Delta \boldsymbol{u}(t)  \Big).
\end{align*}
Rearranging the terms on the right-hand side of the above inequality, we have
\begin{align*}
   \dfrac{1}{2}\frac{\mathrm{d}}{\mathrm{d} t}\nr{\boldsymbol{\nabla}\boldsymbol{u}(t)} + \mu_e \nr{\Delta \boldsymbol{u}(t)} &= \Big((\mu(c_1(t)) \boldsymbol{u}(t), \Delta \boldsymbol{u}(t) \Big) + \Big((\mu(c_1(t)) - \mu(c_2(t)) )\boldsymbol{u}_2(t), \Delta \boldsymbol{u}(t) \Big)\\
&\qquad + \Big(\beta|\boldsymbol{u}_1(t)|\boldsymbol{u}(t) , \Delta \boldsymbol{u}(t)  \Big)   +\Big(\beta(|\boldsymbol{u}_1(t)| - |\boldsymbol{u}_2(t)|)\boldsymbol{u}_2(t) , \Delta \boldsymbol{u}(t)\Big).
\end{align*}
The Cauchy–Schwarz inequality along with the local Lipschitz continuity of $\mu$ and the boundedness of $c_1, c_2, \beta$, yields
\begin{align*}
\dfrac{1}{2}\frac{\mathrm{d}}{\mathrm{d} t}\nr{\boldsymbol{\nabla}\boldsymbol{u}(t)} + \mu_e \nr{\Delta \boldsymbol{u}(t)}&\leq \mu_1 \nrm{\boldsymbol{u}(t)}  \nrm{\Delta \boldsymbol{u}(t)}  + M \|c(t)\|_{L^4} \|\boldsymbol{u}_2(t)\|_{L^4} \nrm{\Delta \boldsymbol{u}(t)} \\ &\qquad + \beta_2 \big(\|\boldsymbol{u}_1(t) \|_{L^4} + \|\boldsymbol{u}_2(t)\big)\|\boldsymbol{u}(t) \|_{L^4} \nrm{\Delta \boldsymbol{u}(t)}. 
\end{align*}
After applying the Sobolev inequality followed by Young’s inequality, we obtain
\begin{align*}
  \dfrac{1}{2}\frac{\mathrm{d}}{\mathrm{d} t}\nr{\boldsymbol{\nabla}\boldsymbol{u}(t)} + (\mu_e - 4 \epsilon) \nr{\Delta \boldsymbol{u}(t)}&\leq  M_1(\epsilon) \nr{u(t)} + M_2(\epsilon) \|\boldsymbol{u}_2(t)\|_{L^4}^{2} \|c(t)\|_{H^1}^2 \\ &\qquad + M_3(\epsilon) \|\boldsymbol{u}_1(t)\|_{L^4}^{2} \nr{\boldsymbol{u}(t)}  + M_4(\epsilon) \|\boldsymbol{u}_2(t)\|_{L^4}^{2} \nr{\boldsymbol{\nabla}\boldsymbol{u}(t)}.
\end{align*}
Define $M_\epsilon := \max\{M_1(\epsilon), M_2(\epsilon), M_3(\epsilon), M_4(\epsilon)\}$. The preceding inequality then implies
\begin{align}\label{strong uniqueness:eq6}
   &\dfrac{1}{2}\frac{\mathrm{d}}{\mathrm{d} t}\nr{\boldsymbol{\nabla}\boldsymbol{u}(t)} + (\mu_e - 4 \epsilon) \nr{\Delta \boldsymbol{u}(t)} \nonumber \\ & \qquad \leq  M_{\epsilon} \left( \nr{u(t)} +  \|\boldsymbol{u}_2(t)\|_{L^4}^{2} \|c(t)\|_{H^1}^2 +  \|\boldsymbol{u}_1(t)\|_{L^4}^{2} \nr{\boldsymbol{u}(t)} + \|\boldsymbol{u}_2(t)\|_{L^4}^{2} \nr{\boldsymbol{\nabla}\boldsymbol{u}(t)}\right).
\end{align}
Adding inequalities \eqref{uniqueness:eq2}, \eqref{uniqueness:eq4},\eqref{strong uniqueness:eq5} and \eqref{strong uniqueness:eq6}, we get 
\begin{align*}
 &\dfrac{1}{2}\frac{\mathrm{d}}{\mathrm{d} t} \big(\nr{c(t)}+ \nr{\boldsymbol{\nabla}c(t)} + \nr{\boldsymbol{u}(t)} +\nr{\boldsymbol{\nabla}\boldsymbol{u}(t)} \big)+ (D - \epsilon)\nr{\boldsymbol{\nabla}c(t)} + (D - 3 \epsilon) \nr{\Delta c(t)} \\ & + (\mu_{e} - \epsilon) \nr{\boldsymbol{\nabla}\boldsymbol{u}(t)}  + (\mu_e - 4 \epsilon) \nr{\Delta \boldsymbol{u}(t)} \leq \varphi(t) \left(\nr{c(t)}+ \nr{\boldsymbol{\nabla}c(t)} + \nr{\boldsymbol{u}(t)} + \nr{\boldsymbol{\nabla}\boldsymbol{u}(t)} \right),
\end{align*}
where $\varphi(t) := M_{\epsilon}\left(\|\boldsymbol{u}_1(t)\|_{L^4}^2 + \|\boldsymbol{u}_2(t)\|_{L^4}^2 +  \|\boldsymbol{u}_1(t)\|_{L^4}^{8/(4-d)}  + \|\boldsymbol{\nabla}c_2(t)\|_{L^4}^{2}\right).$ Now we choose $\epsilon>0$ such that $\epsilon < \min\{D/3, \mu_e/4\}$ and ignoring nonnegtive  terms in  the left hand side of above inequality and then applying the Grownwall's inequality, we get 
\begin{align}\label{continuous dependence of strong solutions}
&\nr{c(t)}+ \nr{\boldsymbol{\nabla}c(t)} + \nr{\boldsymbol{u}(t)} +\nr{\boldsymbol{\nabla}\boldsymbol{u}(t)} \nonumber \\ & \qquad \qquad \leq  \Big( \nr{c(0)}+ \nr{\boldsymbol{\nabla}c(0)} + \nr{\boldsymbol{u}(0)} +\nr{\boldsymbol{\nabla}\boldsymbol{u}(0)}\Big) \exp{\left(2\int_{0}^{T} \varphi(\tau) d\tau \right)},
\end{align}
for all $t \in (0,T)$. This completes the proof of the continuous dependence of strong solutions on initial data. Now using $c_1(0,x) =c_2(0,x)$ and $\boldsymbol{u}_1(0,x) = \boldsymbol{u}_2(0,x)$ in inequality \ref{continuous dependence of strong solutions}, we conclude the uniqueness of  strong solutions.
\end{proof}

\section{Long-time Behavior}\label{sec: Asymptotic Behaviour}
In this section, we shall discuss the long-time behavior of the solution to the IBVP \eqref{model1}--\eqref{assumption:beta-k}. In particular, we shall focus on establishing the exponential decay and finite-time blow-up for the concentration. 
\subsection{Exponential decay of concentration}
\begin{thm}[Exponential decay of concentration]\label{thm: exponential decay}
Let $(c,\boldsymbol{u})$ be a weak solution of the system \eqref{model1}-\eqref{assumption:beta-k}  with initial data $c(0, \boldsymbol{x}):=c_0(\boldsymbol{x})$, for $x\in \Omega$. Suppose the initial concentration satisfies
\[
0 \leq c_0(\boldsymbol{x}) \leq M_0 < 1 \quad \text{a.e. in } \Omega.
\]
Then, for any $p \in [1, \infty]$, the $L^p$-norm of the concentration decays exponentially with a uniform rate $\lambda = \kappa_1(1-M_0)$:
\begin{align}\label{Lp decay}
  \|c(t)\|_{L^p(\Omega)} \leq C_p \, e^{-\lambda \,t}, \quad \mbox{for all} \ \  t \geq 0  
\end{align}
where the constant $C_p$ depends on $p$, $\Omega$ and the initial data $c_0$.
\end{thm}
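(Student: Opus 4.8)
The plan is to exploit the fact that, under the hypothesis $0\le c_0\le M_0<1$, the maximum principle forces the concentration to remain below unity for all time, which turns the otherwise destabilizing quadratic reaction into a genuinely dissipative, linear-type damping term. Concretely, I would first invoke Lemma~\ref{max lemma} together with the remark following it to obtain $0\le c(t,\boldsymbol{x})\le M_0$ for a.e.\ $(t,\boldsymbol{x})\in(0,T)\times\Omega$. The decisive pointwise estimate is then immediate: since $c\ge 0$, $1-c\ge 1-M_0>0$ and $\kappa\ge\kappa_1$, we have
\[
\kappa\,c(c-1)=-\kappa\,c(1-c)\le -\kappa_1(1-M_0)\,c=-\lambda\,c\qquad\text{a.e. in }(0,T)\times\Omega,
\]
with $\lambda=\kappa_1(1-M_0)$. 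This single inequality is the engine of the entire decay result.

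Next I would establish the claim for $2\le p<\infty$ by an $L^p$ energy estimate. Testing the weak formulation \eqref{weak2} with $\phi=c^{p-1}$---which is admissible precisely because the maximum principle renders $c$ bounded, so that $c^{p-1}\in H^1(\Omega)$ with $\boldsymbol{\nabla}(c^{p-1})=(p-1)c^{p-2}\boldsymbol{\nabla}c\in L^2$---the convective term $\br{\boldsymbol{u}\cdot\boldsymbol{\nabla}c}{c^{p-1}}=\tfrac1p\int_\Omega\boldsymbol{u}\cdot\boldsymbol{\nabla}(c^p)\,\mathrm{d}x$ vanishes by \eqref{model1} and \eqref{boundary conditions}, the diffusion term contributes the nonnegative quantity $D(p-1)\int_\Omega c^{p-2}|\boldsymbol{\nabla}c|^2\,\mathrm{d}x$, and the time-derivative term reduces to $\tfrac1p\tfrac{\mathrm{d}}{\mathrm{d}t}\|c\|_{L^p}^p$. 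Using the pointwise reaction estimate in the form $\int_\Omega\kappa\,c^p(c-1)\,\mathrm{d}x\le-\lambda\|c\|_{L^p}^p$ and discarding the nonnegative diffusion term, I obtain $\tfrac{\mathrm{d}}{\mathrm{d}t}\|c\|_{L^p}^p\le-p\lambda\|c\|_{L^p}^p$, and Gr\"onwall's inequality yields $\|c(t)\|_{L^p}\le\|c_0\|_{L^p}\,e^{-\lambda t}$, i.e.\ $C_p=\|c_0\|_{L^p}$.

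It remains to treat the endpoints. For $p=\infty$ I would pass to the limit $p\to\infty$ in the bound just obtained, using $\|c(t)\|_{L^p}\to\|c(t)\|_{L^\infty}$ and $\|c_0\|_{L^p}\to\|c_0\|_{L^\infty}\le M_0$ as $p\to\infty$ on the bounded domain $\Omega$; alternatively, one checks directly that the spatially constant function $\bar c(t)=\norm{c_0}\,e^{-\lambda t}$ is a supersolution of \eqref{model3} (since $\kappa(1-\bar c)\ge\kappa_1(1-M_0)=\lambda$ whenever $\bar c\le M_0$) and concludes via a comparison argument in the spirit of Theorem~\ref{Comparison principle}. For $1\le p<2$, the H\"older inequality on the bounded domain gives $\|c(t)\|_{L^p}\le|\Omega|^{1/p-1/2}\,\nrm{c(t)}$, reducing the statement to the already-proved case $p=2$ with $C_p=|\Omega|^{1/p-1/2}\nrm{c_0}$.

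The step I expect to be the main obstacle is the rigorous justification of the energy identity for a weak solution: neither the identification $\innerproduct{\partial_t c,c^{p-1}}=\tfrac1p\tfrac{\mathrm{d}}{\mathrm{d}t}\|c\|_{L^p}^p$ nor the admissibility of the nonlinear test function $c^{p-1}$ follows directly from the regularity $c\in L^2(0,T;H^1(\Omega))$, $\partial_t c\in L^2(0,T;(H^1(\Omega))^{\ast})$. Here the uniform bound $0\le c\le M_0$ supplied by the maximum principle is essential: it makes $c^{p-1}$ a bounded $H^1$ function and allows the chain rule for the time-derivative term to be justified by a standard Lions--Magenes argument (or by regularizing the map $s\mapsto s^{p-1}$ and passing to the limit). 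Once this technical point is secured, every other step is an immediate consequence of the sign of the reaction term.
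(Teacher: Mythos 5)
Your proposal is correct, but it takes a genuinely different route from the paper's. Both arguments run on the same engine---the maximum principle of Lemma~\ref{max lemma} (with the remark following it) giving $0\le c\le M_0<1$, whence the pointwise bound $\kappa\,c(c-1)\le-\kappa_1(1-M_0)\,c=-\lambda c$---but they deploy it differently. The paper proves decay only at the two endpoints and interpolates: testing with $\phi=1$ yields the $L^1$ decay, the $L^\infty$ decay is obtained by rescaling $w=e^{\lambda t}c$ and re-running the cutoff/contradiction machinery of Lemma~\ref{max lemma} for $w$, and the intermediate exponents follow from $\|c\|_{L^p}\le\|c\|_{L^1}^{1/p}\|c\|_{L^\infty}^{1-1/p}$, giving $C_p=\|c_0\|_{L^1}^{1/p}\|c_0\|_{L^\infty}^{1-1/p}$. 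You instead run a direct $L^p$ energy estimate with the nonlinear test function $\phi=c^{p-1}$ for $2\le p<\infty$, recover $p=\infty$ by letting $p\to\infty$ (using $\|c_0\|_{L^p}\le|\Omega|^{1/p}\|c_0\|_{L^\infty}$, with $|\Omega|^{1/p}\to1$), and reduce $1\le p<2$ to $p=2$ by H\"older. The paper's route buys admissibility for free---it only ever tests the weak formulation with $\phi=1$ and truncations, so no chain-rule issue arises---at the price of a second pass through the delicate maximum-principle argument for $w$; your route buys a more systematic proof with the sharper constants $C_p=\|c_0\|_{L^p}$ and a painless sup-norm endpoint, at the price of exactly the technical point you flag, namely justifying $\innerproduct{\partial_t c,\,c^{p-1}}=\tfrac1p\tfrac{\mathrm{d}}{\mathrm{d}t}\|c\|_{L^p}^p$ for a weak solution with only $\partial_t c\in L^2(0,T;(H^1(\Omega))^{\ast})$; your resolution (the $L^\infty$ bound makes $c^{p-1}$ a bounded $H^1$ function, then a Lions--Magenes argument or regularization of $s\mapsto s^{p-1}$) is the standard and correct fix. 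One cosmetic caveat: your alternative comparison argument for $p=\infty$ appeals to Theorem~\ref{Comparison principle}, which as stated is an ODE result, so the supersolution route would itself require a testing argument in the spirit of Lemma~\ref{max lemma} (essentially reproducing the paper's $w$-argument); since your primary $p\to\infty$ limit is fully rigorous, the proof stands without it.
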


\begin{proof}
We begin by proving the exponential decay in $L^1$ and $L^\infty$ norms separately, and then use the $L^p$-interpolation to conclude the proof of \eqref{Lp decay}. To prove the  $L^1$ decay,  take $\phi = 1$ in equation \eqref{weak2} and using integration parts, we get  that
\begin{align*}
\frac{\mathrm{d}}{\mathrm{d} t} \int_{\Omega }c(t) \mathrm{d} x  &= \int_{\Omega } \kappa\, c(c-1)\mathrm{d} x,\ \mbox{for all}\ t\geq 0. 
\end{align*}
By Lemma \ref{max lemma}, we have $0 \leq c(t,\boldsymbol{x}) \leq M_0 < 1$ and the assumption that $\kappa \geq \kappa_1$, we get
 $$\frac{\mathrm{d}}{\mathrm{d} t} \|c(t)\|_{L^1} \leq -\lambda \|c(t)\|_{L^1},\ \ \mbox{for all}\ t\geq 0$$ where $\lambda:=\kappa_1(1-M_0)$. 
After applying Gr\"onwall's inequality, we get 
\[
\|c(t)\|_{L^1(\Omega)} \leq \|c_0\|_{L^1(\Omega)} \, e^{- \lambda\,t},\ \mbox{for all}\ t\geq 0.
\]
Next, we prove the $L^\infty$ decay for the concentration. Define  $w(t, \boldsymbol{x}) := e^{\lambda\,t}\, c(t, \boldsymbol{x})$ and using this  in equation \ref{weak2}, to obtain  
\[
\innerproduct{\frac{\partial w(t)}{\partial t}, \phi} + \br{\boldsymbol{u}\cdot \boldsymbol{\nabla}w(t)}{\phi} + D\br{\boldsymbol{\nabla}w(t)}{\boldsymbol{\nabla} \phi} \leq 0, \quad \mbox{for all} \  \phi \in H^1(\Omega), \ \ \mbox{satisfying} \ \phi \geq 0, \ \mbox{for all}\ t\geq 0. 
\]
Now we will show that the maximum principle stated in Lemma \ref{max lemma} holds for $w$ defined above. 
We first  define $\widetilde{w}(t,x) := \max\{0, w(t,x) - \|w(0)\|_{L^{\infty}}\}$ for $(t,x)\in (0,\infty)\times \Omega$ and substituting $\phi = \widetilde{w}$, in the above inequality to get 
\[
\innerproduct{\frac{\partial w(t)}{\partial t}, \widetilde{w}(t)} + \br{\boldsymbol{u}\cdot \boldsymbol{\nabla}w(t)}{\widetilde{w}(t)} + D\br{\boldsymbol{\nabla}w(t)}{\boldsymbol{\nabla} \widetilde{w}(t)} \leq 0,\ \ \mbox{for all}\ t\geq 0. 
\]
Now arguing as in the proof of Lemma \ref{max lemma}, we conclude that $\|w(t)\|_{L^{\infty}(\Omega)} \leq \|w(0)\|_{L^{\infty}(\Omega)} = \|c_0\|_{L^{\infty}(\Omega)}$. This, with the  definition of $w,$ gives 
\[
\|c(t)\|_{L^{\infty}(\Omega)} \leq \|c_0\|_{L^{\infty}(\Omega)} \, e^{- \lambda\,t}, \ \mbox{for all}\ t\geq 0. 
\]
Thus, we have shown the decay of $c$ in both $L^1$ and $L^{\infty}$ norms. Now for any  $p \in (1, \infty)$, we apply the standard estimates on  $L^p$-norms of $c$ along with $L^1$ and $L^{\infty}$ estimates on $c$, to get
\begin{align*}
\|c(t)\|_{L^p} \leq 
 C_p \, e^{-\lambda t}, \ \mbox{for all}\ t\geq 0.
\end{align*}
where $C_p = \|c_0\|_{L^1(\Omega)}^{1/p} \|c_0\|_{L^{\infty}(\Omega)}^{1-1/p}$. This completes the proof of Theorem \ref{thm: exponential decay}.
\end{proof}

\subsection{Finite-Time Blow-Up}\label{sec:Blow-Up}
\begin{thm}\label{thm:blowup}
Let the initial concentration satisfy $c_0 \in L^\infty(\Omega)$ with $c_0(x) \geq M > 1$ almost everywhere in $\Omega$. Then, the solution $c(t,x)$ will blow up in  finite-time i.e.  there exists  $T^* < \infty$ such that
\[
\lim_{t \to T^{*}} \|c(t)\|_{L^\infty(\Omega)} = +\infty.
\]
\end{thm}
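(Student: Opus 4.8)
The plan is to trap the solution from below by a spatially homogeneous subsolution that itself blows up in finite time. By the minimum-principle argument already used in Lemma \ref{max lemma} (whose non-negativity part only requires $c_0\ge0$), we have $c\ge0$, and since $\kappa\ge\kappa_1$, whenever $c>1$ the reaction obeys $\kappa\,c(c-1)\ge\kappa_1 c(c-1)$. I would therefore introduce the purely time-dependent comparison function $w(t)$ solving the Riccati-type ODE $w'(t)=\kappa_1\,w(t)(w(t)-1)$ with $w(0)=M>1$. Separating variables gives explicitly $1-\tfrac1{w(t)}=\tfrac{M-1}{M}e^{\kappa_1 t}$, so $w$ is increasing, remains above $1$, and escapes to $+\infty$ at the finite time $T^{*}=\tfrac1{\kappa_1}\ln\tfrac{M}{M-1}$. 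Because $w$ is constant in space, its gradient and Laplacian vanish and it trivially satisfies the no-flux condition \eqref{boundary conditions}, making it an admissible competitor against the weak formulation \eqref{weak2}.

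The core step is the comparison $c(t,\boldsymbol{x})\ge w(t)$. Writing $v:=w-c$ and subtracting the spatially trivial weak equation for $w$ from \eqref{weak2}, I would test with $\phi=v^{+}:=\max\{0,w-c\}\in H^1(\Omega)$, which is admissible for a.e.\ $t$ since $c\in L^2(0,T;H^1(\Omega))$, and note $v^{+}(0)=0$ because $c_0\ge M=w(0)$. The time term yields $\tfrac12\frac{\mathrm{d}}{\mathrm{d}t}\|v^{+}\|_{L^2}^2$ by the chain rule for positive parts, the diffusion term produces $D\|\boldsymbol{\nabla}v^{+}\|_{L^2}^2\ge0$, and the convective term $\br{\boldsymbol{u}\cdot\boldsymbol{\nabla}v}{v^{+}}=\tfrac12\br{\boldsymbol{u}}{\boldsymbol{\nabla}(v^{+})^2}$ vanishes after integration by parts using \eqref{model1} and \eqref{boundary conditions}. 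On the set $\{v>0\}$ I would split the reaction difference as
\[
\kappa_1 w(w-1)-\kappa\,c(c-1)=\kappa_1\,v\,(w+c-1)+(\kappa_1-\kappa)\,c(c-1).
\]
Since $0\le c<w$ on $\{v>0\}$, the first summand is pointwise bounded by $\kappa_1(2w-1)(v^{+})^2$; for the second, it is non-positive where $c\ge1$ or $c\le0$, while on $\{0<c<1\}$ one has $c(1-c)\le\tfrac14$ and, crucially, $v^{+}=w-c>w-1\ge M-1>0$, so $v^{+}\le(v^{+})^2/(M-1)$ converts this contribution into $\tfrac{\kappa_2-\kappa_1}{4(M-1)}\|v^{+}\|_{L^2}^2$.

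Collecting these estimates and discarding the favourable diffusion term yields $\frac{\mathrm{d}}{\mathrm{d}t}\|v^{+}\|_{L^2}^2\le C(t)\,\|v^{+}\|_{L^2}^2$ on every interval $[0,T^{*}-\delta]$, with $C(t)=2\kappa_1(2w(t)-1)+\tfrac{\kappa_2-\kappa_1}{2(M-1)}$ integrable there since $w$ stays bounded away from $T^{*}$. As $v^{+}(0)=0$, Gr\"onwall's inequality forces $v^{+}\equiv0$, i.e.\ $c(t,\boldsymbol{x})\ge w(t)$ for a.e.\ $\boldsymbol{x}$ and all $t$ in the existence interval below $T^{*}$. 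Consequently $\|c(t)\|_{L^\infty(\Omega)}\ge w(t)\to+\infty$ as $t\uparrow T^{*}$, which is incompatible with the solution remaining bounded past $T^{*}$; hence the maximal existence time $T^{*}_{\max}\le\tfrac1{\kappa_1}\ln\tfrac{M}{M-1}<\infty$ and $\|c(t)\|_{L^\infty(\Omega)}\to+\infty$ as $t\to T^{*}_{\max}$. I expect the main obstacle to be the spatially varying reaction rate: the subsolution is built with the lower bound $\kappa_1$, so the residual term $(\kappa_1-\kappa)c(c-1)$ carries an unfavourable sign precisely on the layer $\{0<c<1\}$, and the observation that $v^{+}>w-1>0$ on that layer is exactly what renders this stray $L^1$-type term absorbable into the $L^2$ energy, thereby closing the comparison.
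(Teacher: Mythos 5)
Your proof is correct, and it takes a genuinely different route from the paper. The paper first proves an invariance lemma (Lemma~\ref{lem:blowup}: $c_0 \geq M > 1$ implies $c(t,\cdot)\geq M$) and then runs a Kaplan-type first-moment argument: it differentiates $\mathcal{M}(t)=\int_\Omega c\,\mathrm{d}x$, uses Cauchy--Schwarz to obtain the Riccati inequality $\mathcal{M}'(t) \geq \tfrac{\kappa_1}{|\Omega|}\left(\mathcal{M}(t)^2-|\Omega|\mathcal{M}(t)\right)$, and concludes via $\|c(t)\|_{L^\infty(\Omega)}\geq |\Omega|^{-1}\mathcal{M}(t)$ with the explicit time $T^*=\kappa_1^{-1}\ln\bigl(\mathcal{M}(0)/(\mathcal{M}(0)-|\Omega|)\bigr)$. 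You instead compare $c$ directly from below with the spatially homogeneous blowing-up subsolution $w$ of $w'=\kappa_1 w(w-1)$, $w(0)=M$, proving $c\geq w$ by an energy/Gr\"onwall estimate on $(w-c)^{+}$; your decomposition of the reaction difference is algebraically correct, and the key novelty --- absorbing the heterogeneous-rate residual $(\kappa-\kappa_1)c(1-c)\leq\tfrac{\kappa_2-\kappa_1}{4}$ on the layer $\{0<c<1\}$ via the observation that $v^{+}>w-1\geq M-1$ there --- is exactly what closes the estimate under only $\kappa_1\leq\kappa\leq\kappa_2$. The trade-off: your route yields the stronger pointwise conclusion $c(t,x)\geq w(t)$ a.e.\ (hence blow-up in every $L^p$-norm, and it subsumes Lemma~\ref{lem:blowup} since $w(t)\geq M$), while the paper's moment argument is shorter and its blow-up-time bound is at least as sharp as yours, because $\mathcal{M}(0)/|\Omega|\geq M$ and $x\mapsto\ln\tfrac{x}{x-1}$ is decreasing on $(1,\infty)$, so $\kappa_1^{-1}\ln\tfrac{\mathcal{M}(0)}{\mathcal{M}(0)-|\Omega|}\leq\kappa_1^{-1}\ln\tfrac{M}{M-1}$. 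Two cosmetic remarks: your phrase ``the first summand is pointwise bounded by $\kappa_1(2w-1)(v^{+})^2$'' is accurate only after multiplication by the test function $v^{+}$ (the summand itself is bounded by $\kappa_1(2w-1)v^{+}$ on $\{v>0\}$); and the identity $\langle\partial_t v, v^{+}\rangle=\tfrac12\tfrac{\mathrm{d}}{\mathrm{d}t}\|v^{+}\|_{L^2}^2$ for $v\in L^2(0,T;H^1(\Omega))$ with $\partial_t v\in L^2(0,T;(H^1(\Omega))^{\ast})$ deserves an explicit citation to the standard truncation lemma, though the paper itself uses the same fact tacitly in Lemmas~\ref{max lemma} and~\ref{lem:blowup}, so your argument is at the same level of rigor. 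Note finally that had you first established $c\geq M$ as in Lemma~\ref{lem:blowup}, the layer $\{0<c<1\}$ would be empty and your absorption trick unnecessary; handling it directly is what makes your comparison self-contained.
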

To establish the proof of Theorem \ref{thm:blowup}, we need to prove a lemma related to preserving the lower bound for concentration. We will do this in the following lemma. 
\begin{lem}\label{lem:blowup}
If  $c(0,x):=c_0(x) \geq M > 1$ for almost every $x \in \Omega$ then $c(t,x) \geq M,  \  \text{for a.e. } x \in \Omega \text{ and  } t \in [0, T].$ 
\end{lem}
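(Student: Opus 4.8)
The plan is to prove this minimum principle by a direct energy estimate, in the same spirit as the non-negativity argument of Lemma~\ref{max lemma}, but now centered at the threshold $M$. I would define the negative-part function $v := (c-M)^{-} = \min\{c-M,0\}$, which lies in $H^1(\Omega)$ for almost every $t$ and, crucially, satisfies $v(0,\cdot)=0$ a.e.\ in $\Omega$ since $c_0 \geq M$. Substituting $\phi = v$ into the weak formulation \eqref{weak2} and invoking the chain rule for the Gelfand triple $H^1 \hookrightarrow L^2 \hookrightarrow (H^1)^{\ast}$, the time-derivative term becomes $\tfrac{1}{2}\tfrac{\mathrm{d}}{\mathrm{d}t}\nr{v(t)}$. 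For the convective term I would rewrite $\br{\boldsymbol{u}\cdot\boldsymbol{\nabla}c}{v} = \tfrac{1}{2}\int_\Omega \boldsymbol{u}\cdot\boldsymbol{\nabla}v^2\,\mathrm{d}x$, which vanishes after integration by parts using $\boldsymbol{\nabla}\cdot\boldsymbol{u}=0$ and $\boldsymbol{u}=\boldsymbol{0}$ on $\partial\Omega$, exactly as in Lemma~\ref{max lemma}. The diffusion term contributes $D\nr{\boldsymbol{\nabla}v(t)}\geq 0$, which I will simply drop. Thus everything reduces to controlling the reaction contribution $\br{\kappa\,c(c-1)}{v}$.

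The heart of the argument is the reaction term, and this is where the threshold $M>1$ enters decisively. On the set $\{c<M\}$ (the only place where $v$ is nonzero) I would substitute $c = M+v$ with $v<0$, and writing $\delta := M-1>0$, a direct expansion gives $c(c-1) = \delta(1+\delta) + (1+2\delta)v + v^2$, so that $c(c-1)\,v = \delta(1+\delta)v + (1+2\delta)v^2 + v^3$. Since $v\leq 0$ and $\kappa,\delta>0$, the linear term $\kappa\delta(1+\delta)v$ and the cubic term $\kappa v^3$ are both nonpositive and may be discarded, while the quadratic term obeys $\kappa(1+2\delta)v^2 \leq \kappa_2(2M-1)v^2$. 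Integrating over $\Omega$ therefore yields $\br{\kappa\,c(c-1)}{v} \leq \kappa_2(2M-1)\nr{v}$, and assembling the estimates produces the differential inequality $\tfrac{\mathrm{d}}{\mathrm{d}t}\nr{v(t)} \leq 2\kappa_2(2M-1)\nr{v(t)}$.

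Since $v(0)=0$, Gr\"onwall's inequality forces $\nr{v(t)}=0$ for all $t\in[0,T]$, i.e.\ $(c-M)^{-}=0$, which is precisely the claim $c\geq M$. I expect the only genuine subtlety to be the sign analysis of the reaction term: one must recognize that the potentially destabilizing factor $c(c-1)$---which, for the \emph{upper} bound $c<1$, forced the delicate cutoff/contradiction scheme of Lemma~\ref{max lemma}---is here working in our favor, because near the threshold $c=M>1$ the reaction is positive and the residual cubic term $v^3$ carries the correct (nonpositive) sign. Consequently a straightforward energy estimate closes and no contradiction argument is needed. A minor technical point to verify in passing is that all products, in particular $c(c-1)v$, are integrable, which follows from $c\in L^\infty(0,T;L^2)\cap L^2(0,T;H^1)$ together with the Sobolev embedding $H^1\hookrightarrow L^6$ in dimensions $d=2,3$.
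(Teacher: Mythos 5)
Your proposal is correct and is essentially the paper's own argument: the paper tests \eqref{weak2} with $\widetilde{c}:=\max\{0,\,M-c\}=-v$, kills the convective term by incompressibility, drops the diffusion term, splits $c-1=(c-M)+(M-1)$ to bound the reaction contribution by $\kappa_2\,M\,\nr{\widetilde{c}}$, and concludes via Gr\"onwall from $\widetilde{c}(0)=0$ --- the same truncation--energy--Gr\"onwall scheme as yours, with your expansion $c=M+v$ merely regrouping the same polynomial (and, as a small bonus, your sign analysis of $\delta(1+\delta)v$ and $v^3$ is valid pointwise for all $v\le 0$, whereas the paper's discarding of $\int\kappa\,c(M-1)(c-M)$ implicitly uses $c\ge 0$). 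The only differences are this cosmetic algebra and the resulting Gr\"onwall constant, $2\kappa_2 M$ in the paper versus your $2\kappa_2(2M-1)$.
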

\begin{proof}
Define $\widetilde{c}(t,x) := \max\{0, M-c(t,x)\}$ for $(t,x)\in (0,T)\times\Omega$ and  substitute $\phi = \widetilde{c}$,  in  equation  \eqref{weak2}, to get
    \begin{align*}
   - \frac{1}{2}\frac{\mathrm{d}}{\mathrm{d} t} \nr{\widetilde{c}}  - D \nr{\boldsymbol{\nabla}\widetilde{c}} =  \int_{\{ c \leq M\}} \kappa\, c (c-1) (M-c) \mathrm{d} x.
    \end{align*}
After adding and subtracting $M$ in the term $(c-1)$ of above equation,  we get 
\begin{align*}
\begin{aligned}
    \frac{1}{2}\frac{\mathrm{d}}{\mathrm{d} t} \nr{\widetilde{c}} +  D \nr{\boldsymbol{\nabla}\widetilde{c}} &=  \int_{\{ c \leq M\}} \kappa\, c (c- M ) (c-M) \mathrm{d} x + \int_{\{ c \leq M\}} \kappa\, c (M- 1 ) (c-M) \mathrm{d} x\\
    &\leq \int_{\{ c \leq M\}} \kappa\, c (c- M ) (c-M) \mathrm{d} x.
    \end{aligned}
    \end{align*}
Next after applying the bounds $0 < \kappa_1 \leq \kappa \leq \kappa_2$, we arrive at 
\begin{align*}
    \frac{1}{2}\frac{\mathrm{d}}{\mathrm{d} t} \nr{\widetilde{c}}  + D \nr{\boldsymbol{\nabla}\widetilde{c}} \leq \kappa_2\, M\nr{\widetilde{c}}.
\end{align*}
Finally, using the Gr\"onwall's inequality, we get
   \begin{align*}
     \nr{\widetilde{c}(t)}  \leq \nr{\widetilde{c}(0)}\, e^{2 \kappa_2\, M }, \text{ for } t \in (0,T).
   \end{align*} 
Utilizing $\widetilde{c}(0,x) = 0$, we conclude that $\widetilde{c}(t,x) = 0$ for all $(t,x) \in (0,T)\times\Omega$. Therefore, $c(t,x) \geq M$ a.e. in $(0,T) \times \Omega$.
\end{proof}

\begin{proof}[\textbf{Proof of Theorem}~\ref{thm:blowup}]
Define \begin{align}\label{Energy}
 \mathcal{M}(t) := \int_{\Omega}c(t,x) \mathrm{d} x.   
\end{align}
Differentiating $\mathcal{M}(t)$ with respect to $t$ and using convection diffusion reaction equation \eqref{model3}, we get
\begin{align}
 \mathcal{M}'(t)   = \int_{\Omega} \frac{\partial c(t)
 }{\partial t}\mathrm{d} x = \int_{\Omega} \Big( -\boldsymbol{u}\cdot\boldsymbol{\nabla}c + D \Delta c + \kappa\, c(c-1) \Big)\mathrm{d} x.
\end{align}
Using the integration by parts along with the boundary and incompressibility condition, Lemma \ref{lem:blowup} and the bound $\kappa \geq \kappa_1$, we get 
\begin{align*}
 \mathcal{M}'(t)  \geq \kappa_1 \left(\int_{\Omega}  c^2(t,x)\mathrm{d} x -  \int_{\Omega}  c(t,x) \mathrm{d} x \right).
\end{align*}
Apply the Cauchy–Schwarz inequality along with the definition of $\mathcal{M}(t)$, to get
\begin{align*} 
\mathcal{M}'(t) \geq 
\frac{\kappa_1}{|\Omega|} \left(\int_{\Omega} c\,\mathrm{d} x \,\right)^2 - \kappa_1 \int_{\Omega} c \,\mathrm{d} x=\frac{\kappa_1}{|\Omega|} \left(\mathcal{M}(t)^2 - |\Omega|\mathcal{M}(t) \right). 
\end{align*}
After solving the above differential inequality, we get 
\begin{align*} \mathcal{M}(t) \geq \frac{|\Omega|\mathcal{M}(0)}{\mathcal{M}(0) - (\mathcal{M}(0) - |\Omega|)e^{\kappa_1 t}}.
\end{align*}
Next using the relation $\|c(t)\|_{L^{\infty}(\Omega)} \geq |\Omega|^{-1}\mathcal{M}(t)$, we obtain 
\begin{align*}\|c(t)\|_{L^{\infty}(\Omega)} \geq \frac{\mathcal{M}(0)}{\mathcal{M}(0) - (\mathcal{M}(0) - |\Omega|)e^{\kappa_1 t}}.
\end{align*}
Now since the denominator of the right hand side of above inequality vanishes when $t=T^*:=\frac{1}{\kappa_1} \ln{\left(\frac{\mathcal{M}(0)}{\mathcal{M}(0)-|\Omega|}\right)}$, hence we get that 
\begin{align*}\lim_{t \to T^{\ast}} \|c(t)\|_{L^\infty(\Omega)} = +\infty.
\end{align*}
This completes the proof of Theorem \ref{thm:blowup}.
\end{proof}

\section{Numerical Validation}\label{Numerical Validation}

We utilize \textsc{COMSOL Multiphysics} \textsuperscript{\textregistered}  \cite{COMSOL2024} to numerically simulate the IBVP \eqref{model1}-\eqref{assumption:beta-k} where our goal is to validate the analytical decay and finite time blow up of concentration proved in section \ref{sec: Asymptotic Behaviour}.  The simulation is performed on the space-time cylinder $(0,T)\times \Omega$, where the spatial domain is defined as $\Omega = (0, 400) \times (0, 200)$. Adhering to the constraints required to establish the well-posedness analysis of IBVP \eqref{model1}-\eqref{assumption:beta-k}, throughout this section, we assume that the viscosity function $\mu(c) = e^{R\, c}$, where $R$ denotes the viscosity contrast coefficient.  
\subsection{Weak Formulation and Setup}
Let $\boldsymbol{V}$, $Q = L^2_0(\Omega) $, and $W = H^1(\Omega)$ denote the test function spaces for velocity, pressure and concentration respectively. The variational formulation is: Find $(\boldsymbol{u}, p, c) \in \boldsymbol{V} \times Q \times W$ such that the following equations
\begin{subequations}\label{weak_form}
    \begin{align}
        0 &= \int_{\Omega} (\nabla \cdot \boldsymbol{u}) \, q \, \mathrm{d} x, \\
        0 &= \int_{\Omega} \left[ \frac{\partial \boldsymbol{u}}{\partial t} \cdot \boldsymbol{v} + e^{R\,c} \boldsymbol{u} \cdot \boldsymbol{v} + |\boldsymbol{u}| \boldsymbol{u} \cdot \boldsymbol{v} + \boldsymbol{\nabla} \boldsymbol{u} : \boldsymbol{\nabla} \boldsymbol{v} - p \boldsymbol{\nabla}\cdot \boldsymbol{v} \right] \mathrm{d} x, \\
        0 &= \int_{\Omega} \left[ \frac{\partial c}{\partial t} \phi + (\boldsymbol{u} \cdot \nabla c) \phi + D \boldsymbol{\nabla} c \cdot \boldsymbol{\nabla} \phi - \kappa\, c(c-1) \phi \right] \mathrm{d} x
    \end{align}
\end{subequations}
hold for all $(q,\boldsymbol{v}, \phi) \in  Q \times \boldsymbol{V} \times W$. 
 Set the initial velocity $\boldsymbol{u}_0 = (0.1, 0)$ and  initial concentration distribution by 
\begin{align}
    c_0(x, y) =  \begin{cases} 
M_0, & 50 \leq x \leq 150, \\ 
0, & \text{otherwise},
\end{cases}
\end{align}
where $M_0 \in (0,1)$. In the numerical simulations, we vary the reaction coefficient $\kappa$ in the range $[0.005,\,0.02]$ and take $M_0 \in [0.4,\,0.8]$, while fixing $D = 0.005$ and $R = 1$.

\subsection{Decay Rate Analysis}
\begin{figure}[h!]
    \centering
    \begin{minipage}[t]{0.48\linewidth}
        \centering
        \includegraphics[width=1\linewidth, trim={0 210 10 210}, clip]{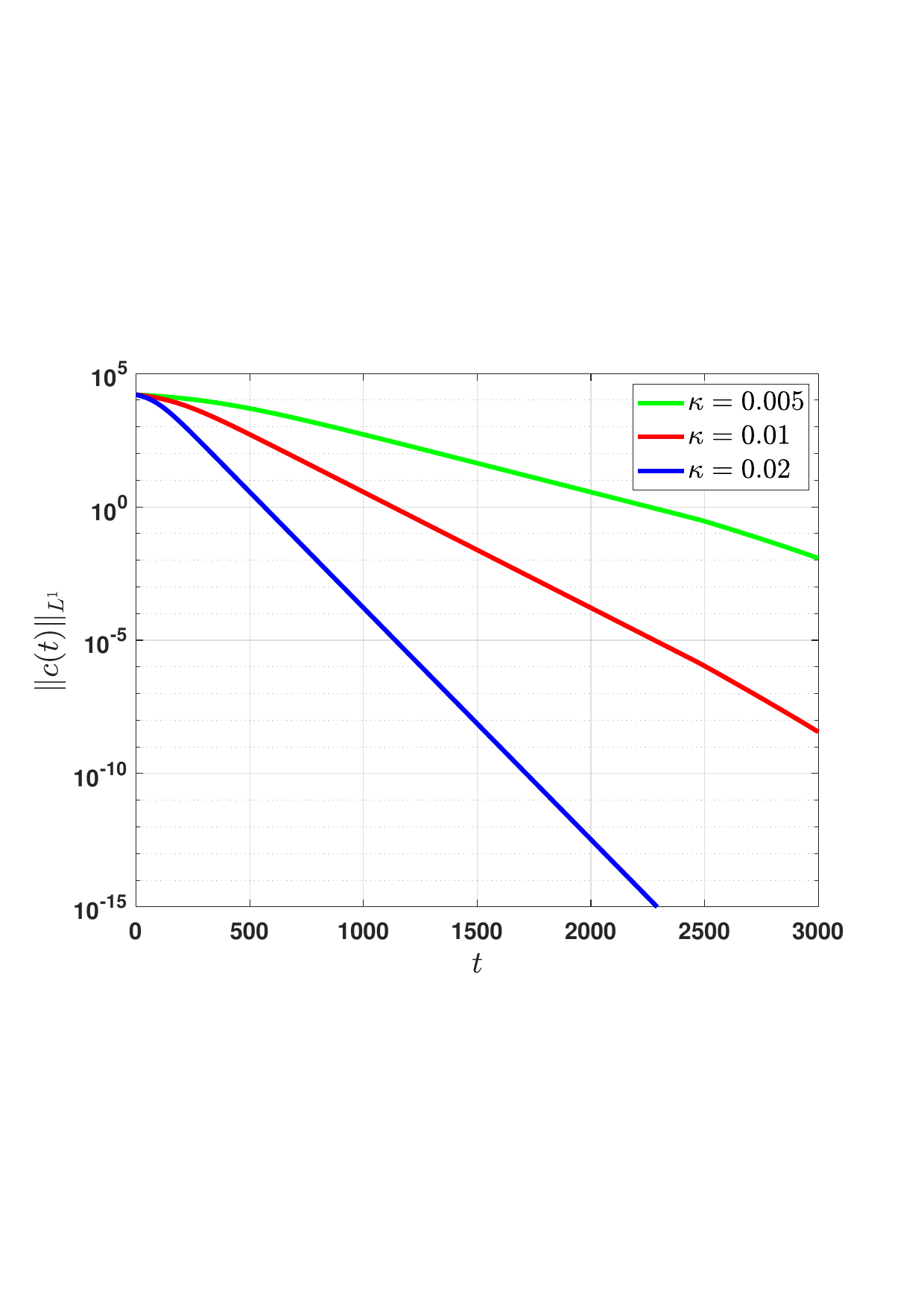}
        \caption{Time evolution of the $L^1(\Omega)$-norm of the concentration, $\|c(t)\|_{L^1(\Omega)}$, for different values of the reaction parameter $\kappa$, with $M_0 = 0.8$.}
        \label{fig: kappa effect on L_1 norm}
    \end{minipage}
    \hfill
    \begin{minipage}[t]{0.48\linewidth}
        \centering
        \includegraphics[width=1\linewidth, trim={0 210 10 210},clip]{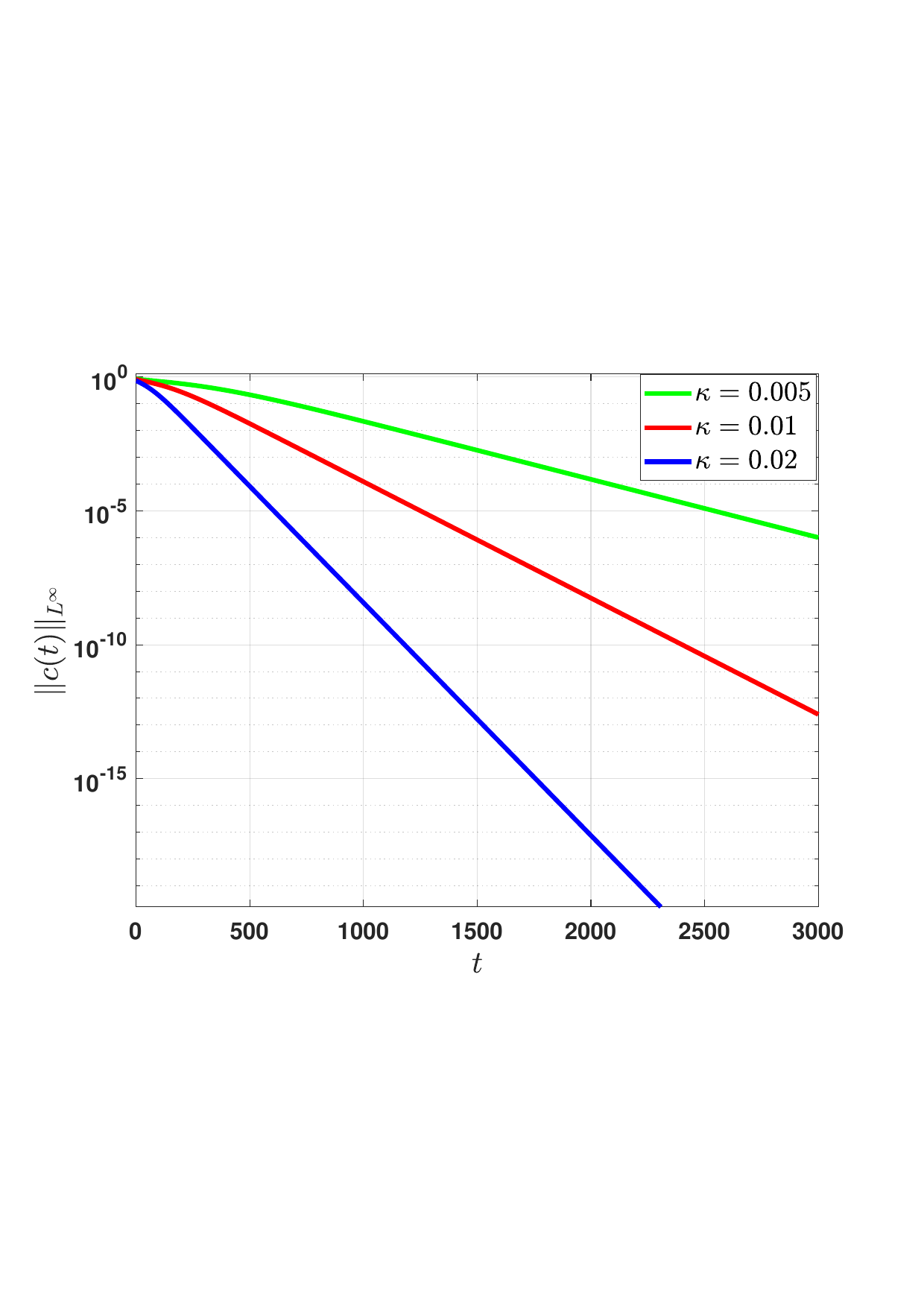}
        \caption{Time evolution of the $L^1(\Omega)$-norm of the concentration, $\|c(t)\|_{L^\infty(\Omega)}$, for different values of the reaction parameter $\kappa$, with $M_0 = 0.8$.}
        \label{fig: kappa effect on L_infty norm}
    \end{minipage}
\end{figure}
From Lemma \ref{thm: exponential decay}, we have  $\|c(t)\|_{L^p(\Omega)} \leq M e^{-\kappa(1-M_0)\, t}$, for $1\leq p\leq \infty$, which  indicates a theoretical minimum decay rate of $\lambda = \kappa(1-M_0)$. To verify this behavior, we plotted the time evolution of the $L^1$ and $L^\infty$ norms of concentration on a logarithmic scale (Figures \ref{fig: kappa effect on L_1 norm} and \ref{fig: kappa effect on L_infty norm}). The resulting plots exhibit a dominant linear trend characteristic of exponential decay, with slopes that steepen as $\kappa$ increases, consistent with the theoretical prediction. Closer inspection, however, reveals that these slopes are time-dependent, implying that the numerical decay rate $\lambda_{\mathrm{num}}$ is not strictly constant however evolves over time.
\begin{figure}[h!]
    \centering
    \begin{minipage}[t]{0.48\linewidth}
        \centering
        \includegraphics[width=1\linewidth, trim={0 210 10 210}, clip]{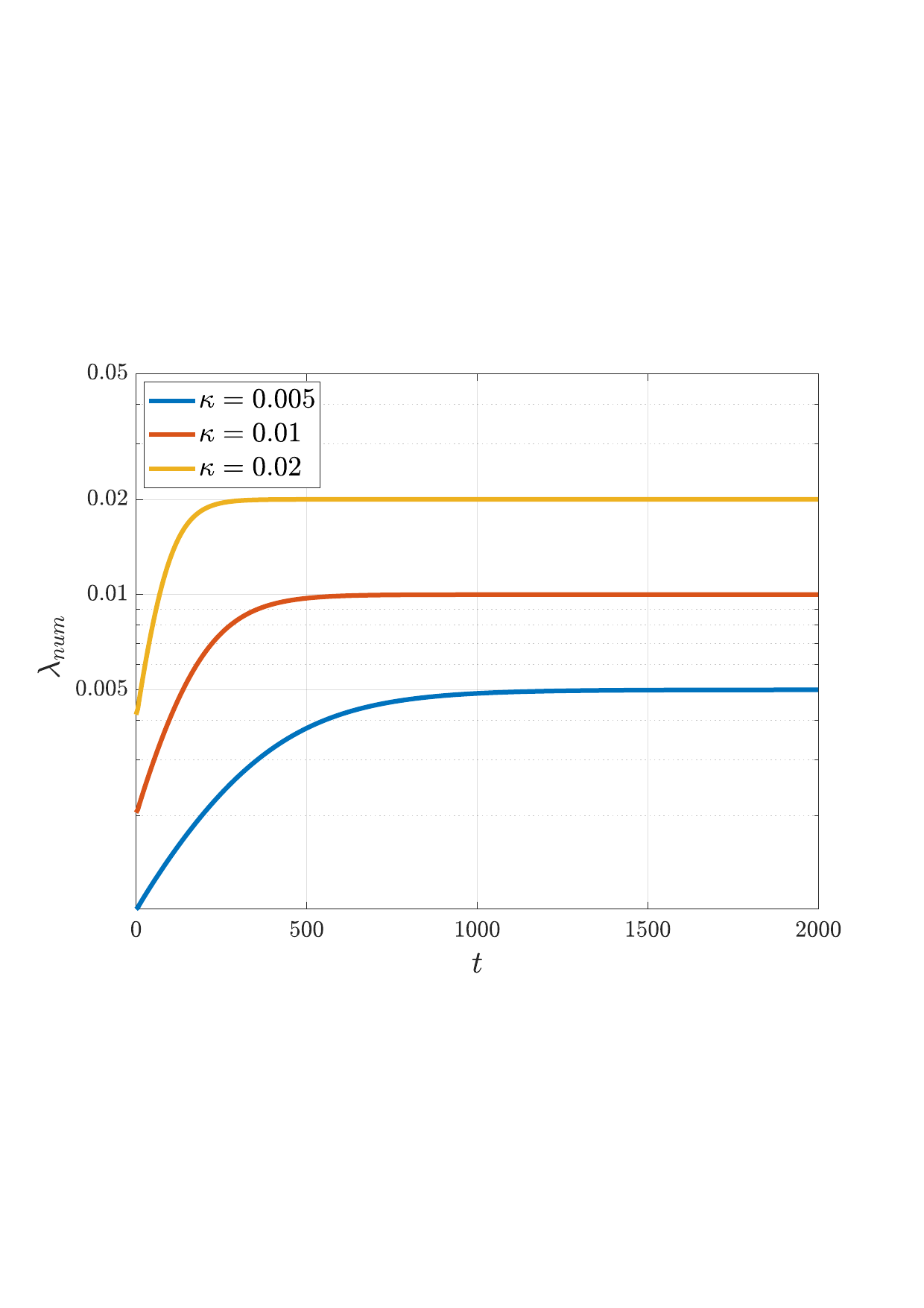}
        \caption{Numerical decay rate $\lambda_{\mathrm{num}}$ of the $L^1(\Omega)$-norm of the concentration, $\|c(t)\|_{L^1(\Omega)}$, for fixed $M_0 = 0.8$ and $\kappa = 0.005,\,0.01,\,0.02$.}
        \label{fig: decay rate}
    \end{minipage}
    \hfill
    \begin{minipage}[t]{0.48\linewidth}
        \centering
        \includegraphics[width=1\linewidth, trim={0 210 10 210},clip]{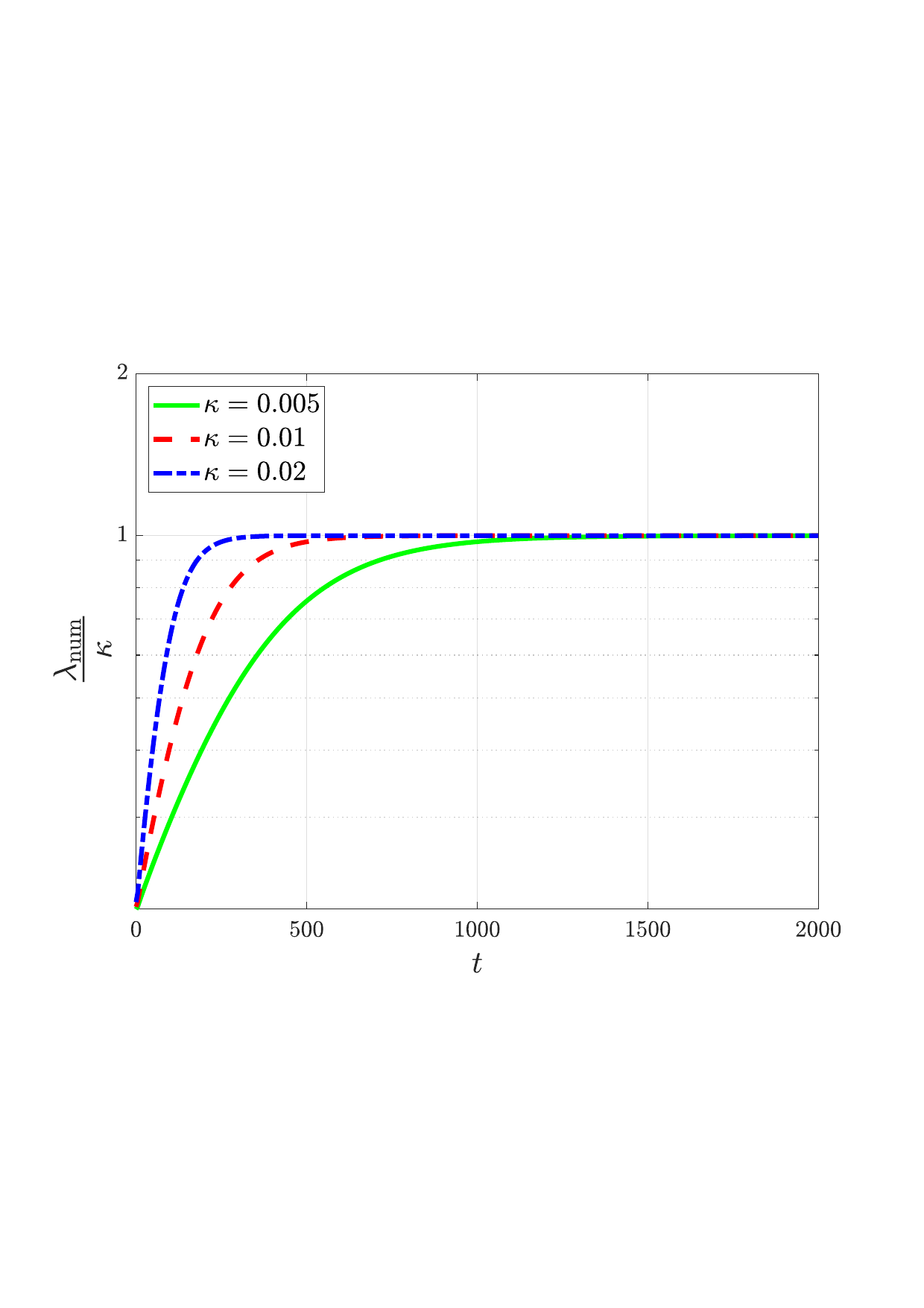}
        \caption{Numerical decay rate of the $L^1(\Omega)$-norm of the concentration, $\|c(t)\|_{L^1(\Omega)}$, for fixed $M_0 = 0.8$.}
        \label{fig: normalized decay rate}
    \end{minipage}
\end{figure}
In the proof of Lemma \ref{thm: exponential decay}, we employed the estimate $\kappa\, c(c-1) \leq - \kappa (1 - M_0) c $ while this holds globally, the asymptotic behavior as $c \to 0$ satisfies $\kappa\, c(c-1) \approx - \kappa\, c$. This suggests a transition in the decay rate: initially $\kappa (1 - M_0)$, eventually increasing to approach $\kappa$. To validate this, we calculated the numerical decay rate $\lambda_{\mathrm{num}} = - \frac{\mathrm{d}}{\mathrm{d}t} \log(\|c(t)\|_{L^1(\Omega)}$) for $\kappa \in \{0.005, 0.01, 0.02\}$. As shown in Figure \ref{fig: decay rate}, $\lambda_{\mathrm{num}}$ increases over time and eventually stabilizes. Furthermore, Figure \ref{fig: normalized decay rate} presents the decay rate normalized by $\kappa$; this confirms that the numerical decay rate asymptotically approaches the theoretical limit $\kappa$.
\begin{figure}[h!]
    \centering
    \begin{minipage}[t]{0.48\linewidth}
        \centering
        \includegraphics[width=1\linewidth, trim={0 210 10 210}, clip]{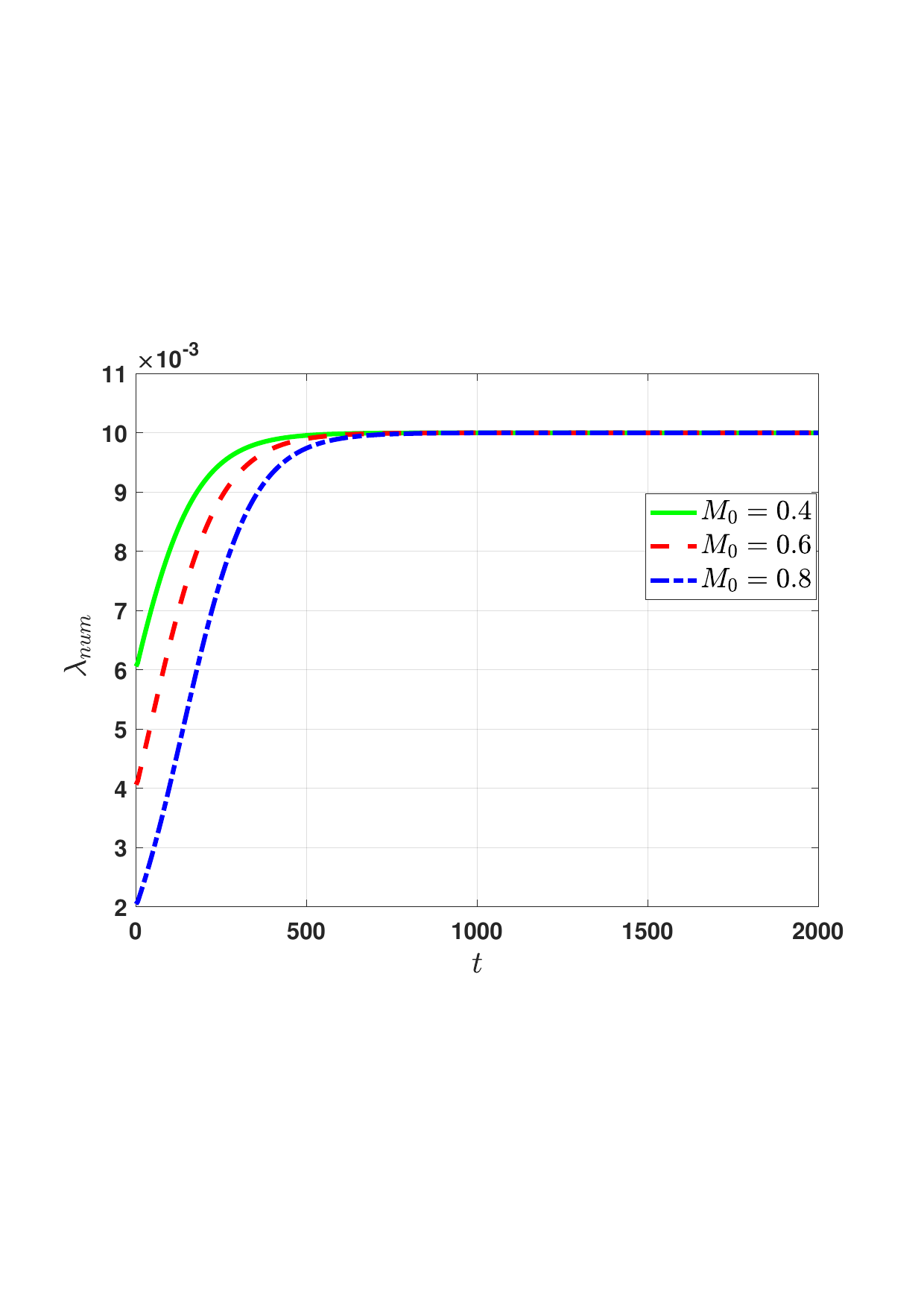}
        \caption{Numerical decay rate $\lambda_{\mathrm{num}}$ of the $L^1(\Omega)$-norm of the concentration, $\|c(t)\|_{L^1(\Omega)}$, for fixed $\kappa = 0.01$  and $M_0 = 0.4,\,0.6,\,0.8$.}
        \label{fig: decay rate vs M_0}
    \end{minipage}
    \hfill
    \begin{minipage}[t]{0.48\linewidth}
        \centering
        \includegraphics[width=1\linewidth, trim={0 210 10 210},clip]{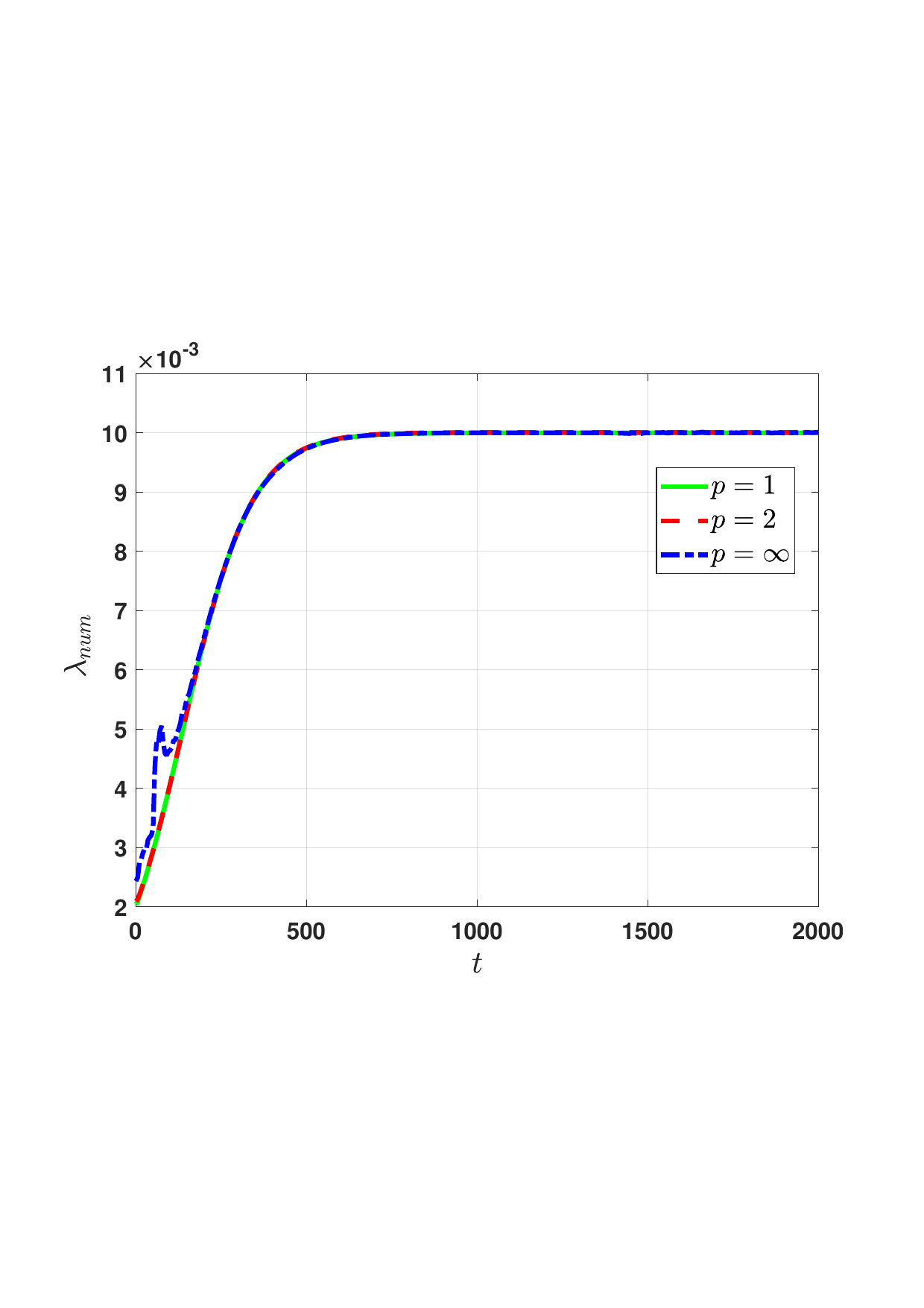}
        \caption{Numerical decay rate $\lambda_{\mathrm{num}}$ of the $L^p(\Omega)$-norm of the concentration for fixed $M_0 = 0.8$, $\kappa = 0.01$, and $p = 1,2,\infty$.}
        \label{fig: decay rate for p=1,2}
    \end{minipage}
\end{figure}
Figure \ref{fig: decay rate vs M_0} demonstrates that the numerical results substantiate the theoretical lower bound $\kappa(1 - M_0)$, as the initial decay rate $\lambda_{num}$ at $t=0$ coincides precisely with this predicted minimum for each $M_0$. Following initialization, the system evolves monotonically away from this lower bound, with all cases eventually converging to a common asymptotic limit equal to $\kappa$. The sensitivity of the numerical decay rate to the definition of the functional space is analyzed in Figure \ref{fig: decay rate for p=1,2} for $p=1, 2, \infty$. Aside from negligible transient oscillations in the $L^\infty$ norm, the curves exhibit identical growth and saturation behaviors. The results establish that the system converges uniformly to the unique steady-state limit regardless of the chosen $L^p$ norm.
\subsection{Finite-Time Blow-Up: Numerical Validation}
To validate the finite-time blowup, we choose the initial condition $c_0(\boldsymbol{x}) = 1.2$ and set $\kappa = 0.01$. Then, from Theorem \ref{thm:blowup}, we obtain the lower bound $$\|c(t)\|_{L^{\infty}} \geq \frac{6}{6 - e^{0.01\, t}}.$$This predicts a finite-time blowup at $T^* \approx 179.17$. To verify this, we plotted $\|c(t)\|_{L^{\infty}}$ versus time in Figure \ref{fig: blowup_1}. From the figure, we observe that the numerical solution (green solid line) stays strictly above the theoretical lower bound (blue dashed line) and grows explosively as $t$ approaches $T^*$. The vertical asymptote at $t = 179.17$ aligns perfectly with the theoretical prediction, confirming the blowup time.

\begin{figure}[h!]
    \centering
    \begin{minipage}[t]{0.48\linewidth}
        \centering
        \includegraphics[width=1\linewidth, trim={0 210 10 210}, clip]{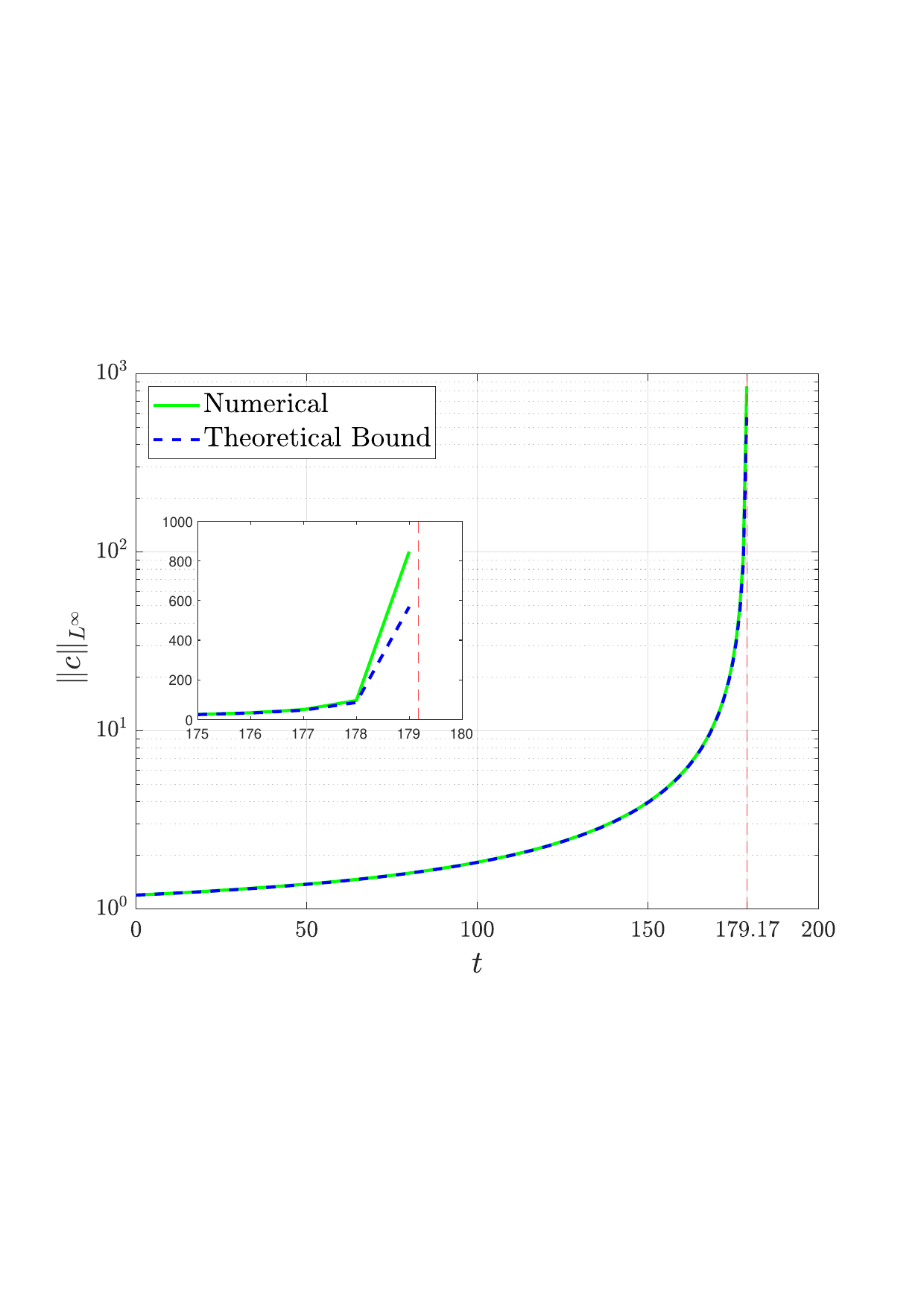}
        \caption{Time evolution of the $L^\infty(\Omega)$-norm of the concentration, $\|c(t)\|_{L^\infty(\Omega)}$, compared with the theoretical lower bound.}
        \label{fig: blowup_1}
    \end{minipage}
\end{figure}

\section{Conclusion}\label{sec: conclusion}
In this work, we analyzed a unsteady Darcy-Forchheimer-Brinkman system coupled with a convection-diffusion-reaction equation featuring concentration-dependent viscosity and a quadratic reaction term. We established local-in-time existence of weak solutions and identified a physically relevant invariant region for the concentration, which yields global existence and uniqueness of weak solutions for initial data satisfying $0 \le c_0 \le 1$. In this regime, we further proved the existence, uniqueness, and continuous dependence of strong solutions for higher regular initial data. Moreover, solutions exhibit uniform exponential decay in $L^p(\Omega)$ for all $1 \le p \le \infty$, indicating asymptotic stability of the trivial equilibrium. In contrast, when the initial concentration exceeds the unit threshold, the destabilizing effect of the superlinear reaction leads to finite-time blow-up of the $L^\infty$-norm. An explicit upper bound for the blow-up time was derived, together with a lower bound characterizing the growth mechanism responsible for the singularity.

Numerical simulations based on a finite element discretization are consistent with the analytical results, illustrating both the exponential decay behavior in the subcritical regime and the onset of finite-time blow-up for supercritical initial data. The framework developed here provides a basis for the analysis of more complex multi-component transport models in porous media involving nonlinear flow-reaction coupling.

\section*{Acknowledgments}
 S. Kundu acknowledges UGC, Government of India, for a research fellowship (Ref: 1145/CSIR-UGC NET June 2019).

\bibliographystyle{plain}
\bibliography{references}

\begin{thebibliography}{10}

\bibitem{Amirat1998}
Y.~Amirat and A.~Ziani.
\newblock Global weak solutions to a parabolic system modelling a one-dimensional miscible flow in porous media.
\newblock {\em J. Math. Anal. Appl.}, 220(2):697--718, 1998.

\bibitem{brinkman1949calculation}
H.~C. Brinkman.
\newblock A calculation of the viscous force exerted by a flowing fluid on a dense swarm of particles.
\newblock {\em Flow Turbul. Combust.}, 1:27--34, 1949.

\bibitem{COMSOL2024}
{COMSOL AB}.
\newblock {\em {COMSOL Multiphysics\textsuperscript{\textregistered} v. 6.2}}.
\newblock COMSOL AB, Stockholm, Sweden, 2024.
\newblock \url{https://www.comsol.com}.

\bibitem{dentz2011mixing}
M.~Dentz, T.~Le~Borgne, A.~Englert, and B.~Bijeljic.
\newblock Mixing, spreading and reaction in heterogeneous media: {A} brief review.
\newblock {\em J. Contam. Hydrol.}, 120:1--17, 2011.

\bibitem{garcke2019}
M.~Ebenbeck and H.~Garcke.
\newblock On a {C}ahn--{H}illiard--{B}rinkman model for tumor growth and its singular limits.
\newblock {\em SIAM J. Math. Anal.}, 51(3):1868--1912, 2017.

\bibitem{emami2015convective}
H.~Emami-Meybodi, H.~Hassanzadeh, C.~P. Green, and J.~Ennis-King.
\newblock Convective dissolution of {CO}$_2$ in saline aquifers: Progress in modeling and experiments.
\newblock {\em Int. J. Greenh. Gas Control}, 40:238--266, 2015.

\bibitem{EvansPDE}
Lawrence~C. Evans.
\newblock {\em Partial Differential Equations}.
\newblock American Mathematical Society, 2010.

\bibitem{Feng1995}
X.~Feng.
\newblock Strong solutions to a nonlinear parabolic system modeling compressible miscible displacement in porous media.
\newblock {\em Nonlinear Anal.}, 24(8):1159--1169, 1995.

\bibitem{fisher1937wave}
R.~A. Fisher.
\newblock The wave of advance of advantageous genes.
\newblock {\em Ann. Eugen.}, 7(4):355--369, 1937.

\bibitem{Forchheimer1901}
P.~Forchheimer.
\newblock Wasserbewegung durch {B}oden.
\newblock {\em Z. Ver. Dtsch. Ing.}, 45:1782--1788, 1901.

\bibitem{HAJDUK20177141}
Karol~W. Hajduk and James~C. Robinson.
\newblock Energy equality for the 3d critical convective brinkman–forchheimer equations.
\newblock {\em J. Differ. Equ.}, 263(11):7141--7161, 2017.

\bibitem{huppert2014fluid}
H.~E. Huppert and J.~A. Neufeld.
\newblock The fluid mechanics of carbon dioxide sequestration.
\newblock {\em Annu. Rev. Fluid Mech.}, 46:255--272, 2014.

\bibitem{kundu2024existence}
S.~Kundu, S.~N. Maharana, and M.~Mishra.
\newblock Existence and uniqueness of solution to unsteady {D}arcy--{B}rinkman problem with {K}orteweg stress for modelling miscible porous media flow.
\newblock {\em J. Math. Anal. Appl.}, 539(1):128532, 2024.

\bibitem{lake2014fundamentals}
L.~W. Lake, R.~Johns, B.~Rossen, and G.~A. Pope.
\newblock {\em Fundamentals of enhanced oil recovery}, volume~1.
\newblock Society of Petroleum Engineers, Richardson, TX, 2014.

\bibitem{lakshmikantham1969differential}
V.~Lakshmikantham and S.~Leela.
\newblock {\em Differential and Integral Inequalities: Theory and Applications}, volume~1.
\newblock Academic Press, New York, 1969.

\bibitem{ma1993microscopic}
H.~Ma and D.~W. Ruth.
\newblock The microscopic analysis of high {F}orchheimer number flow in porous media.
\newblock {\em Transp. Porous Med.}, 13(2):139--160, 1993.

\bibitem{Manickam1993}
O.~Manickam and G.~M. Homsy.
\newblock Stability of miscible displacements in porous media with nonmonotonic viscosity profiles.
\newblock {\em Phys. Fluids A}, 5(6):1356--1367, 1993.

\bibitem{MEDKOVA2024398}
Dagmar Medková.
\newblock The mixed problem for the darcy-forchheimer-brinkman system.
\newblock {\em J. Differ. Equ.}, 405:398--429, 2024.

\bibitem{Pramanik_Hota_Mishra_2015}
Satyajit Pramanik, Tapan~Kumar Hota, and Manoranjan Mishra.
\newblock Influence of viscosity contrast on buoyantly unstable miscible fluids in porous media.
\newblock {\em J. Fluid Mech.}, 780:388–406, 2015.

\bibitem{roy2025existence}
P.~Roy and S.~Pramanik.
\newblock Existence and uniqueness results of unsteady reactive flows in porous media.
\newblock {\em arXiv preprint arXiv:2506.22225}, 2025.

\bibitem{refId0}
Toni Sayah, Georges Semaan, and Faouzi Triki.
\newblock Finite element methods for the darcy--forchheimer problem coupled with the convection-diffusion-reaction problem.
\newblock {\em ESAIM: Math. Model. Numer. Anal.}, 55(6):2643--2678, 2021.

\bibitem{migorski2019nonmonotone}
M.~Stanislaw and P.~Szafraniec.
\newblock Nonmonotone slip problem for miscible liquids.
\newblock {\em J. Math. Anal. Appl.}, 471:342--357, 2019.

\bibitem{TanHomsy1986}
C.~T. Tan and G.~M. Homsy.
\newblock Stability of miscible displacements in porous media: {R}ectilinear flow.
\newblock {\em Phys. Fluids}, 29(11):3549--3556, 1986.

\bibitem{titi2024global}
E.~S. Titi and S.~Trabelsi.
\newblock Global well-posedness of a three-dimensional {B}rinkman--{F}orchheimer--{B}\'enard convection model in porous media.
\newblock {\em Discrete Contin. Dyn. Syst. Ser. S}, 17(5\&6):1857--1875, 2024.

\bibitem{ugurlu2008existence}
D.~U\u{g}urlu.
\newblock On the existence of a global attractor for the {B}rinkman--{F}orchheimer equations.
\newblock {\em Nonlinear Anal.}, 68(7):1988--1998, 2008.

\bibitem{Varsakelis_2017}
C~Varsakelis and M~V Papalexandris.
\newblock On the well-posedness of the darcy–brinkman–forchheimer equations for coupled porous media-clear fluid flow.
\newblock {\em Nonlinearity}, 30:1449, 2017.

\bibitem{williams2018combustion}
F.~A. Williams.
\newblock {\em Combustion theory}.
\newblock CRC Press, 2018.

\bibitem{zeng2006criterion}
Z.~Zeng and R.~Grigg.
\newblock A criterion for non-{D}arcy flow in porous media.
\newblock {\em Transp. Porous Med.}, 63(1):57--69, 2006.

\end{thebibliography}
\end{document}